\def\A{{\mathcal A}}
\def\C{{\mathcal C}}
\def\E{{\mathbb E}}
\def\F{{\mathcal F}}
\def\I{{\mathcal I}}
\def\L{{\mathcal L}}
\def\M{{\mathcal M}}
\def\N{{\mathbb N}}
\def\P{{\mathcal P}}
\def\Pu{{\mathcal P}_{\mathrm{unif}}}
\def\PP{{\mathbb P}}
\def\Q{{\mathcal Q}}
\def\R{{\mathbb R}}
\def\RR{{\mathcal R}}
\def\T{{\mathbb T}}
\def\tr{{\mathrm{Tr}}}
\def\V{{\mathcal V}}
\def\sW{\textsf{W}}
\def\X{{\mathcal X}}
\newcommand{\Erdos}{Erd\"{o}s-R\'enyi}
\def\msquare{\mathord{\scalerel*{\Box}{gX}}}
\newcommand{\xmark}{\ding{55}}%
\newcommand{\xcheckmark}{\checkmark\kern-1.1ex\raisebox{.7ex}{\rotatebox[origin=c]{125}{\textbf{--}}}}
\def\given{\,|\,}
\def\EE{\mathbb{E}}
\newcommand{\theHalgorithm}%
{\arabic{algpseudocode,algorithm,algorithmicx}}
\crefname{assumption}{Assumption}{Assumptions}
\theoremstyle{plain}
\newtheorem{theorem}{Theorem}[section]
\newtheorem{proposition}[theorem]{Proposition}
\newtheorem{lemma}[theorem]{Lemma}
\newtheorem{corollary}[theorem]{Corollary}
\theoremstyle{definition}
\newtheorem{definition}[theorem]{Definition}
\newtheorem{assumption}[theorem]{Assumption}
\theoremstyle{remark}
\newtheorem{remark}[theorem]{Remark}
\newcommand{\fz}[1]{\textcolor{blue}{[fz: #1]}}
\newcommand{\cz}[1]{\textcolor{orange}{#1}}
\icmltitlerunning{Graphon Mean Field Games with a Representative Player: Analysis and Learning Algorithm}
\begin{document}
\abovedisplayskip=3pt plus 2pt minus 1pt
\abovedisplayshortskip=0pt plus 3pt
\belowdisplayskip=3pt plus 2pt minus 1pt
\belowdisplayshortskip=0pt plus 3pt minus 0pt

\twocolumn[
\icmltitle{Graphon Mean Field Games with a Representative Player: Analysis and Learning Algorithm}


\icmlsetsymbol{equal}{*}

\begin{icmlauthorlist}
\icmlauthor{Fuzhong Zhou}{ieor}
\icmlauthor{Chenyu Zhang}{dsi}
\icmlauthor{Xu Chen}{ceem}
\icmlauthor{Xuan Di}{ceem}
\end{icmlauthorlist}

\icmlaffiliation{ieor}{Department of Industrial Engineering and Operations Research, Columbia University, New York, NY, USA}
\icmlaffiliation{dsi}{Data Science Institute, Columbia University, New York, NY, USA}
\icmlaffiliation{ceem}{Department of Civil Engineering and Engineering Mechanics, Columbia University, New York, NY, USA}

\icmlcorrespondingauthor{Xuan Di}{sharon.di@columbia.edu}


\vskip 0.3in
]



\printAffiliationsAndNotice{}  

\begin{abstract}
We propose a discrete time graphon game formulation on continuous state and action spaces using a representative player to study stochastic games with heterogeneous interaction among agents. 
This formulation admits both philosophical and mathematical advantages, compared to a widely adopted formulation using a continuum of players. 
We prove the existence and uniqueness of the graphon equilibrium with mild assumptions, and show that this equilibrium can be used to construct an approximate solution for finite player game on networks, which is challenging to analyze and solve due to curse of dimensionality. An online oracle-free learning algorithm is developed to solve the equilibrium numerically, and sample complexity analysis is provided for its convergence.
\end{abstract}

\section{Introduction}
\label{sec:intro}

\renewcommand{\check}{\CheckmarkBold}
\newcommand{\uncheck}{\XSolidBrush}
Many real-world applications, such as flocking \cite{perrin2021mean}, epidemiology \cite{cui2022hypergraphon}, 
and autonomous driving \cite{huang2019game} 
involve multiagent systems, 
where agents optimize individual cumulative rewards by selecting sequential actions in an (in)finite horizon, while interacting strategically among one another.
In discrete time, such finite player games form Markov games \cite{littman_markov_1994,solan2015stochastic,yang_mean_2018}. 
At a Nash equilibrium (NE), nobody can improve her payoff by unilaterally switching her individual action policies. 
The NE is challenging to solve when the population size grows due to curse of dimensionality \cite{wang2020breaking}. 
To address such a challenge, mean field formulations are proposed to model players interacting with others only via an aggregate population, usually a population measure, instead of individual states or actions directly. 

Mean field games (MFGs) \cite{huang2006large,Lasry2007mean} is a type of mean field model which describes the limiting behavior of its corresponding finite player game as the number of players is large, and their analytical properties are now well-studied \cite{CarmonaMFGI}.
The model is build upon the assumption that the interaction among players are homogeneous, in the sense that all players follow the same state distribution. As one interacts with the population only through the measure, all players react in a same manner and considering one representative is straightforward and appropriate.

As a generalization to MFGs, graphon mean field games (GMFGs or graphon games) are developed \cite{caines2021graphon, caine21, Aurell23, cui2022learning, TangpiZhou2023optimal} to tackle the limiting behavior of finite player games with \textit{heterogeneous} agents who interact \textit{asymmetrically}, deemed as games on networks. In such network games, the interactions are given by a weighted graph (network), where each player is represented by a vertex and the interaction intensities among players are depicted by edge weights.
Each player reacts to an interaction-weighted average of other individuals' empirical state measure, which is made precise in \cref{section:finite.player.game}. As a limiting model, GMFGs models a continuum of players whose interaction intensities are given by a graphon $W\in L_1[0,1]^2$, which is a natural limit of finite graphs and can be deemed as a weighted graph on infinitely many vertices labeled by the continuum $[0,1]$.
Rather than depending on states of some specific individuals, a player in the game reacts only to an average of the population state distribution, which is weighted as individuals of different types (different vertices in the graphon) exert heterogeneous influence on the current player.

GMFGs cover a wider range of real-world applications than MFGs, as it allows more flexibility with heterogeneous interaction. They are applicable to problems in finance, economics, and engineering, including for instance high-frequency trading, social opinion dynamics and autonomous vehicle driving.
Because the equilibrium of GMFGs may not be solved explicitly in general, 
recent years have seen a growing trend of using learning methods for equilibria.
Compared to abundant studies on learning MFGs
\cite{Cardaliaguet2015fp,yang2018deep,guo2019learning,romuald2020mean,perrin2020fp,perrin2021mean,perrin2022mean,lauriere2022mean,chen2023no,chen2023hybrid,chen2023learning}, 
learning for graphon games \cite{cui2022learning,zhang2023learning} is relatively understudied. 

A major roadblock in learning GMFGs lies in the fact that there is no consensus on what a mathematically tractable formulation of GMFGs should be, since it is not straightforward to describe the limiting behavior (or a common population measure) of large number of heterogeneous players. There are mainly two types of formulations so far.
The first type, also the widely adopted one, models a game for a continuum (uncountably infinite) of players with distinct types \cite{caines2021graphon},
so-called ``continuum-player" games \cite{Carmona21IStaticCase}. In this formulation, each player is assigned a controlled state process, which evolves independently of other individuals' state processes, and optimize her own reward function.

Unfortunately, this formulation suffers from limitations. 
Theoretically, the joint measurability of state dynamics with respect to the player types and randomness under the usual product space $\sigma$-algebra is not compatible with the independence of their evolution,
which potentially poses challenges for the analytical investigation of solution properties (\cref{section:comparison.two.graphon.games}); And practically, it is difficult to develop an algorithm that directly solves a system of optimal control problems for a continuum of players.
Moreover, these studies could lack consistency between formulations (that model infinitely many players) and algorithms (that only sample a single representative agent).

To tackle the aforementioned challenges, a second kind of formulation \cite{Lacker2022ALF} refers to a generic representative player who represents all types of players while interacting with the aggregate population. While the state distribution for players of different type are different, it is possible to fit their label-state pairs into a common law on the product space of labels and state paths.
This formulation is amenable to theoretical guarantees and ease the algorithmic design and implementation. 

In this paper, we study discrete time graphon games of the second formulation with rigorous analysis and learning methods. We start from finite player games to motivate graphon games, which in turn provide approximate equilibria for finite games in dense interaction networks.
Subsequently, GMFGs and graphon games always refer to the representative-player formulation, unless otherwise specified.

\paragraph{Related work.}
A detailed comparison with the most relevant studies on learning GMFGs is demonstrated in \cref{appendix:compare.with.related.works}.
\textbf{Continuum-player formulation:} In discrete time regime, 
\cite{cui2022learning} showed the existence of Nash equilibrium and approximate equilibrium for finite player games under Lipschitz transition kernel and graphon, and
\cite{zhang2023learning} only showed the existence of Nash equilibrium for GMFGs with entropic regularization. Both studies assumed access to an oracle that returns the population dynamics, and the latter further assumes access to an action-value function oracle that returns the optimal policies. Under these assumptions, \cite{zhang2023learning} provides a convergence rate of their algorithm, while \cite{cui2022learning} only shows the asymptotic convergence.
In continuous time regime, \cite{caines2021graphon} focused on finite networks where each vertex represents a population. \cite{caine21,Aurell23,TangpiZhou2023optimal} studied linear quadratic games, and the latter two adopted rich Fubini extensions to address the measurability issue. 
\textbf{Representative-player formulation:}
As the establisher and the only work to the best of our knowledge, \cite{Lacker2022ALF} rigorously studied the equilibrium existence uniqueness and approximate equilibrium in continuous-time, with no discussion in algorithm implementation.

\paragraph{Contributions.}
Our major contributions are:
\begin{itemize}[nosep, left=0pt]
\item 
We propose a general-purpose graphon game framework on continuous state and action space with one representative player that admits great technical and philosophical advantages over the continuum-player formulation in most prior work.
\item 
We present an extensive self-contained analysis on the equilibrium including existence, uniqueness, and approximate equilibrium to network games with weaker assumptions and novel proof techniques.
\item
We give a comprehensive discussion and clarification on various aspects of graphon games, including but not limited to the MFG reformulation, overview on measurability issue, convergence of graph sequence, and fixed point iteration.
\item
We provide the first fully oracle-free online algorithm that numerically solves the equilibrium, and showed a sample complexity analysis for the ready-to-implement algorithm with assumptions equivalent to or weaker than prior work.
\item We conduct abundant numerical experiments with assessments that demonstrate the validity of our algorithm design.
\end{itemize}


\section{Preliminaries}
\label{sec:pre}

\subsection{Notation}
Let $E$ be any Polish space (complete separable metric topological space). We use $\P(E)$ to represent all the probability measures on $E$ equipped with the weak topology, with $\Rightarrow$ being the weak convergence. Let $\M_{+}(E)$ denote the space of nonnegative Borel measures of finite variation. 
Denote $\Vert\cdot\Vert_{\mathrm{TV}}$ the total variation norm. Given a random element $X$ valued in $E$, let $\L(X)\in\P(E)$ be the probabilistic law (distribution) of $X$. For any $\mu\in\P(E)$, we write $X\sim \mu$ if $\L(X)=\mu$. For simplicity, we represent the integral with $\langle\mu,\phi \rangle=\int_{E}\phi d\mu$ for $\mu\in\M_{+}(E)$ and measurable $\phi$.

Let $\P_{\mathrm{unif}}([0,1]\times E)$ denote a measure on product space $[0,1]\times E$ with uniform first marginal. We always consider $E$ to be a regular space, and thus each element $\mu\in\P_{\mathrm{unif}}([0,1]\times E)$ admits a disintegration $du \mu^u(dx)$ where $\mu^u(dx)$ is a kernel $[0,1]\to E$ uniquely defined for Lebesgue almost every $u$. 
\subsection{Graphon}
\subsubsection{Definition}
A graphon $W$ is an $L_1$ integrable function $:[0,1]^2\to \R_+$. 
It represents a graph with infinitely many vertices taking labels in $[0,1]$, and the edge weight connecting vertex $u$ and $v$ is given by $W(u,v)$. 
It is a natural notion for the limit of a sequence of graphs as the size of vertices grows.


Any finite graph can be expressed equivalently as a graphon: given any graph on $n\ge 1$ vertices with non-negative edge weights, it can be equivalently expressed as a matrix $\xi\in\R_+^{n\times n}$, where $\xi_{ij}$ is the edge weight between vertex $i$ and $j$. We define a \textit{step graphon associated with $\xi$}, denoted as $W_{\xi}$ on $[0,1]^2$ as below:
\begin{equation}
	\label{eq:step}
	W_{\xi}(u,v):= \sum_{i,j=1}^n \xi_{ij} 1_{\{u\in I^n_i, v\in I^n_j\}} 
	,
\end{equation}
where the interval of $[0,1]$ is divided into $n$ bins with the $i^{th}$ bin as $I^n_i:=[(i-1)/n, i/n), \forall i=1, \dots, n-1$; $I^n_n:=[(n-1)/n,1]$. 

\subsubsection{Graphon Operator} \label{section:graphon.operator}
Given a Polish space $E$ and any graphon $W$, the graphon operator $\sW$, which maps a measure in $\P_{\mathrm{unif}}([0,1]\times E)$ to a function $[0,1]\to \M_+(E)$, is defined as follows \cite{Lacker2022ALF}: for any $m\in\P_{\mathrm{unif}}([0,1]\times E)$,
\begin{align}
	\sW m(u) := \int_{[0,1]\times E} W(u,v)\delta_x m(dv, dx)
,\end{align}
where $\delta_x$ is Dirac delta measure at $x$. 
Intuitively, let us assume $m$ admits disintegration $m(du,dx)=dv m_u(dx)$, and $W$ represents a graph with infinitely many vertices where each vertex $u\in[0,1]$ bears a random value on $E$ with distribution $m_u$. Then, $\sW m(u) = \int_{[0,1]\times E} W(u,v)\delta_x m_v(dx)dv$ is an average of the distributions of the random values over all vertices, weighted by edges with $u$ as one end. Note that $\sW m(u)\in\M_+(E)$ since the weighted average may no longer be a probability measure.

\subsubsection{Strong Operator Topology}
Now we define the convergence of graphons in strong operator topology.
We abuse the notation by denoting the usual integral operator $\sW:L_{\infty}[0,1]\to L_1[0,1]$, 
\begin{align} \label{eq:integral.operator}
    \sW\phi(u) := \int_{[0,1]} W(u,v)\phi(v)dv, \quad \forall\phi\in L_{\infty}[0,1]
,\end{align}
and it should lead to no ambiguity as graphon operators and integral operators have different domains. We say a sequence of graphons $W^n$ converges to a limit graphon $W$ in the strong operator topology if for any $\phi\in L_{\infty}[0,1]$, $\Vert \sW^n\phi - \sW\phi\Vert_1\to 0$, denoted as $W^n\to W$. Convergence in strong operator topology is usually weaker than convergence in cut norm, see \cref{sec:graph.sequence.convergence}.

\section{Finite Player Games} \label{section:finite.player.game}
\subsection{Game Formulation}
Consider a game with $n\in\N_+$ players. Let $\xi\in\R_+^{n\times n}$ be an interaction matrix with nonnegative entries, where $\xi_{ij}$ is the interaction influence of player $j$ onto player $i$ for $i,j\in[n]$. Let $T\in \mathbb{N}_+$ be terminal time of the game, and $\T:=\{0,1,2,\dots, T-1\}$. 
At each time $t$, denote $\mathbf{X}^{}_t=(X^{1}_t, \dots, X^{n}_t)\in(\R^d)^n$ the state dynamics of all the players, i.e., each player's state takes value in $\R^d$ for some fixed $d\ge 1$, and let $\C:= (\R^d)^{T+1}$ be the space of state paths. For any $x\in\C$, write $x_t$ the value of path at time $t$.  
The initial states $\mathbf{X}^{}_0$ follow a vector of initial measures $\mathbf{\lambda}=(\lambda^1,\dots,\lambda^n)\in(\P(\R^d))^n$.
At each time every player may choose an action from the action space $A$, and we assume that $A\subset\R^d$ is compact. 
Let $\A_n$ be the collection of all feedback policies $\T\times (\R^d)^n\to \P(A)$, and each player's action follows a policy from this collection. For any policy $\pi^i\in\A_n$ chosen by player $i$, the state process of player $i$ evolves by a transition kernel $P:\T\times\R^d\times \M_+(\R^d)\times A \to \P(\R^d)$ as follows
\begin{align*}
    & X^{i}_0 \sim \lambda^i, \\
    & a^i_t\sim \pi^i_t(\mathbf{X}_t), \qquad X^{i}_{t+1} \sim P_t(X^{i}_t, M^i_t, a^i_t) 
,\end{align*}
for $i=1,\dots, n$, where
\begin{align*}
    M^i := \frac{1}{n} \sum_{j=1}^n \xi_{ij} \delta_{X^j} \in\M_{+}(\C)
\end{align*}
is the weighted empirical neighborhood measure of player $i$, and $M^i_t$ is the time $t$ marginal of $M^i$. 
The measure is empirical as it is an average of the Dirac measures at the realizations; in particular, $M^i$ is a random measure. 
For a general matrix $\xi$, $M^i$ depicts the heterogeneous interaction: for a player $i$, the influence $\xi_{ij}$ from player $j$ is different from the influence $\xi_{ir}$ from player $r$ for $r\ne j$. In the special case where $\xi$ is the adjacency matrix of an unweighted complete graph, i.e., $\xi$ has $0$ on the diagonal and $1$ off diagonal, the interactions become homogeneous, and $M^i$ becomes the simple empirical measure of the states of all other players.

At a given time step, each player chooses an action according to her policy, and her state process $X$ is a Markov decision process (MDP), which now depends not only on her current state and action, but also the empirical weighted neighborhood measure. Note that at each time $t$, the policy $\pi^i$ of player $i$ may depend on each of other players' state, while the transition law $P$ should only depend on other players by an aggregation of their states, i.e., the empirical weighted neighborhood measure. 

At each time all players receive a running reward according to some function $f:\T\times\R^d\times  \M_+(\R^d)\times A \to \R$, and they receive a terminal reward at the terminal time $T$ according to some function $g:\R^d\times\M_+(\R^d)\to \R$. The objective of player $i$ is to maximize her expected accumulated reward
\begin{align*}
	J^i(\bm{\pi}) := \E\left[ \sum_{t\in\T} f_t(X^{\bm{\pi},i}_t, M^{\bm{\pi},i}_t, a^{\bm{\pi},i}_t) + g(X^{\bm{\pi},i}_T, M^{\bm{\pi},i}_T) \right]
,\end{align*}
which is a function of the policy of all players $\bm{\pi}=(\pi^1,\dots,\pi^n)\!\in\!(\A_n)^n$. We write $X^{\bm{\pi},i}$, $M^{\bm{\pi},i}$ and $a^{\bm{\pi},i}$ to emphasize that the state dynamic of player $i$ depends on $\bm{\pi}$. 
\begin{definition} \label{def:finite.player.game.eqbm}
    For any nonnegative vector $\bm{\epsilon}=(\epsilon^1,\dots, \epsilon^n)\!\in\!\R^n_+$, an $\bm{\epsilon}$-equilibrium of an $n$-player game is defined as $\widehat{\bm{\pi}}=(\widehat{\pi}^{1},\dots,\widehat{\pi}^{n})\in(\A_n)^n$ such that for any $i$,
    \begin{align}
    	J^i(\widehat{\bm{\pi}}) \ge \sup_{\pi\in\A_n} J^i(\widehat{\bm{\pi}}^{-i}, \pi) - \epsilon_i
    ,\end{align}
    where $(\widehat{\bm{\pi}}^{-i}, \pi)$ denotes the vector $\widehat{\bm{\pi}}$ with $i^\mathrm{th}$ coordinate replaced by $\pi$.
\end{definition}

\subsection{Mapping $n$-Player Indices onto a Continuous Label Space}\label{remark:example.graphon.operator} 
This part serves as a transition from finite player game defined above, to its limiting system in the next section.  
In the finite $n$-player game, we map the index of agent $i\in\{1,\cdots,n\}$ onto a continuous label space $[0,1]$, by assigning player $i$ a label $u_i\in I^n_i:=[(i-1)/n, i/n), \forall i=1,\dots,n-1$ and $u_n\in I^n_n:=[(n-1)/n, 1]$.

We demonstrate that the empirical weighted neighborhood measure $M^i$ can be expressed in terms of the graphon operator. 
Let $W_{\xi}$ be the step graphon \eqref{eq:step} associated with interaction matrix $\xi$, then the interaction between player $i$ and $j$ can be expressed by $\xi_{ij} = W_{\xi}(u_i, u_j)$. Define the empirical label-state joint measure
\begin{align} \label{eq:empirical.labelstate.measure}
    S := \frac{1}{n} \sum_{i=1}^n \delta_{(u_i, X^i)} \in \P([0,1]\times \C)
,\end{align}
which is an empirical measure of the label-state pairs of all players. Then we have for $i=1,\dots, n$,
\begin{align}
	\resizebox{0.9\linewidth}{!}{$\displaystyle
    M^i = \frac{1}{n}\sum_{j=1}^n \xi_{ij}\delta_{X^j} 
    = \int
    W(u_i,v)\delta_x S(dv, dx)
		= \sW_{\xi} S(u_i)
	$}
    \label{eq:example.graphon.operator}
.\end{align}
This demonstrates that the graphon operator is a generalization of the weighted neighborhood measure when there are infinitely many players: with $W$ being the interaction among a continuum of players, and $\mu$ being their population label-state joint measure, $\sW \mu(u)$ is the weighted neighborhood measure for the player of label $u\in[0,1]$.

\section{Representative-Player Graphon Games} \label{section:analysis}


\subsection{Game Formulation} \label{section:problem.setup}
 Given a graphon $W\in L^1_+[0,1]^2$ representing the interaction intensity among a continuum-type of players labeled in $[0,1]$, with $W(u,v)$ being the interaction intensity between player $u$ and player $v$, we define the graphon game associated with $W$ for a single representative player as follows. Let the state and action space be defined as in \cref{section:finite.player.game}. Let $(\Omega, \F, \mathbb{F}, \PP)$ be a filtered probability space that supports an $\F_0$-measurable random variable $U$ uniform on $[0,1]$, and an adapted Markov process $X$ valued in $\R^d$. We understand $U$ as the label for the representative player, and $X$ as her state dynamic. The initial label-state law of the representative player is given by $\lambda:=\L(U, X_0)\in\P_{\mathrm{unif}}([0,1]\times \R^d)$. The term ``label-state" always refer to the joint measure of a player's label and state pair $(U,X)$. As in mean field games, we abstract all other players into a measure $\mu\in\P_{\text{unif}}([0,1]\times \C)$, i.e., each non-representative player should admit $\mu$ as her label-state joint measure, and the representative player only reacts to the population via this measure. 
 Let $\mu$ be fixed.
Let $\mu_t\in \P_{\text{unif}}([0,1]\times \R^d)$ be the marginal of $\mu$ under image $(u,x)\mapsto(u,x_t)$. 


Let $\V_U$ be the collection of all the open-loop policies, i.e., all the adapted process valued in $\P(A)$. Let $\A_U$ denotes the collection of all the closed-loop (Markovian) policies, i.e., measurable functions $\T\times[0,1]\times\R^d\to \P(A)$. $\A_U$ is usually a proper subset of $\V_U$, unless the filtration is generated by $U$ and $X$. For any $\pi\in\V_U$, the label-state pair $(U, X)$ follows the transition dynamic $(U, X_0)\sim \lambda$ and at each $t\in\T$,
\begin{align*}
    &a_t\sim \pi_t, \qquad X_{t+1} \sim P_t(X_t, \sW\mu_t(U),  a_t)
,\end{align*}
for the same $\{P_t\}_{t\in\T}$ as in the finite player game introduced in \cref{section:finite.player.game}. In words, the representative player is uniformly assigned a label $U$ at time $0$, and her later state transition depends on her current state, action and weighted neighborhood measure $\sW\mu_t(U)\in\P(\R^d)$. 
Recall \eqref{eq:example.graphon.operator} in the finite player case, $\mu$ is now a generalization of $S$ defined in \eqref{eq:empirical.labelstate.measure} when there are infinitely many types of players.
We may consider the disintegration $\mu(du, dx)=du \mu^u(dx)$, where $du$ is the Lebesgue measure and $[0,1]\ni u\mapsto \mu^u\in\mathcal{P}(\mathcal{C})$ is a probabilistic kernel. Then $ \textsf{W} \mu(u) = \int_{[0,1]} W(u,v) \int_{\mathcal{C}} \delta_x \mu^v(dx) dv$. The inner integral is the path measure of player $v$, and the outer integral depicts an average of state distributions over all labels $v$, weighted by their interaction with the representative player when her label is $u\in[0,1]$.

Let $f:\T\times\R^d\times\M_{+}(\R^d)\times A\to\R$ be the running reward and $g:\R^d\times\M_{+}(\R^d)\to\R$ be the terminal reward. The objective of the representative player is to choose a policy $\pi\in \V_U$ to maximize
{\footnotesize
\begin{align*}\hspace*{-8pt}
	 J_W(\mu, \pi) &:= \E\bigg[ \sum_{t\in\T} f_t(X^{\pi}_t, \sW \mu_t(U), a^{\pi}_t) + g(X_T^{\pi}, \sW \mu_T(U)) \bigg]
.\end{align*}
}
Note that the expectation is w.r.t. all random elements on $\F$, i.e., $(U, X)$ and $\pi$, and we use $X^{\pi}$, $a^{\pi}$ to emphasize that they depend on the policy $\pi$.
\begin{definition}
	We say that the measure-policy pair $(\widehat{\mu},\widehat{\pi})\in \P_{\mathrm{unif}}([0,1]\times\C)\times\V_U$ is a $W$-equilibrium if 
	\begin{align}
		J_W(\widehat{\mu}, \widehat{\pi}) &= \sup_{\pi\in \V_U} J_W(\widehat{\mu}, \pi), \\
            \widehat{\mu} &= \L(U, X^{\widehat{\pi}})
	.\end{align}
$\widehat{\mu}$, $\widehat{\pi}$ are called the equilibrium population measure and equilibrium optimal policy respectively.
\end{definition}

Intuitively, the game is formulated for a representative player, while all other players are abstracted into a label-state joint measure $\mu$. The representative player interacts with the population only through the weighted neighborhood measure $\sW\mu(U)$, according to which she takes action to optimize her reward. The proposed graphon game is a strict generalization of MFGs, and it degenerates to an MFG when the graphon $W\equiv 1$. This is made precise in \cref{section:degeneration.mfg}. We will give a comprehensive comparison between our formulation and the continuum-player graphon game in \cref{appendix:comparison}.

\begin{remark}
We define an infinite horizon version of graphon game with time-invariant dynamics and rewards in \cref{sec:infinite.horizion.setting}. The analysis in the rest of this section could be easily adapted to the infinite horizon formulation by eliminating the time dependency of functions.

\end{remark}


\begin{remark}[GMFG as MFG with augmented state space]
The graphon games defined here could be transformed into classical MFGs with an augmented state space, by imposing the label space $[0,1]$ as an additional dimension to the state space. However, this does not simplify the analysis or proof, and it is not appropriate to adapt existing MFG results directly (See \cref{section:game.with.aug.space}). 
\end{remark}

\subsection{Existence of Equilibrium} \label{section:existence}
\begin{assumption} \label{assump:existence}
\begin{enumerate}[nosep, left=0pt]
    \item The action space $A$ is a compact subspace of $\R^d$.
		\item The running rewards $\{f_t\}_{t\in\T}$ and terminal reward $g$
    are bounded and jointly continuous.
    \item \label{assump:tightness.of.initial}
    The initial distribution $\lambda\in\P_{\mathrm{unif}}([0,1]\times \R^d)$ admits disintegration $\lambda(du,dx)=du\lambda_u(dx)$, and the following collection of measures is tight%
    \footnote{Recall the definition of tightness: for arbitrary index set $I$ and Polish space $E$, a collection of probability measures $\{P_i\}_{i\in I}\subset\P(E)$ is tight if for any $\epsilon >0$, there exists some compact measurable subset $K\subset E$ such that $\inf_{i\in I} P_i(K)>1-\epsilon$.}%
    : 
    \begin{align*}
        \{\lambda_u\}_{u\in [0,1]}\subset\P(\R^d)
    .\end{align*}
    \item \label{assump:tightness.of.transition}
    For each $t\in\T$, the following collection of measures is tight:
    \begin{align*}
        \zeta_t:=\{ P_t&(x,m,a)\}_{(x,m,a)\in \R^d\times\M_+(\R^d)\times A }\subset\P(\R^d)
    .\end{align*}
    \item \label{assump:P.cont.in.action}
    For each $t\in\T$, $P_t(x,m,\cdot)$ is continuous in $A$ for every $(x,m)\in\R^d\times \M_+(\R^d)$.
\end{enumerate}
\end{assumption}
An example case where \cref{assump:existence}(\ref{assump:tightness.of.initial}, \ref{assump:tightness.of.transition}) are trivially satisfied is that there exists some compact subspace $\X\subset \R^d$ such that the collection $\zeta_t$ are uniformly supported on $\X$, i.e., the Markov process $X$ takes values in the state space $\X$. Also note that we do not assume the graphon $W$ to be continuous.

\begin{theorem} \label{theo:existence}
    Suppose \cref{assump:existence} holds. Then there exists a $W$-equilibrium $(\widehat{\mu}, \widehat{\pi})$. Moreover, the equilibrium optimal policy $\widehat{\pi}$ can be chosen to be a closed-loop policy.
\end{theorem}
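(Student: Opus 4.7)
The plan is to realize a $W$-equilibrium as a fixed point of the best-response correspondence and invoke a Kakutani-Fan-Glicksberg set-valued fixed point theorem on the space $\P_{\mathrm{unif}}([0,1]\times \C)$ equipped with the weak topology. First, I fix $\mu \in \P_{\mathrm{unif}}([0,1]\times\C)$ and analyze the representative player's optimization: conditional on $U = u$, she faces a finite-horizon MDP with transitions $P_t(\cdot, \sW\mu_t(u), \cdot)$ and rewards $f_t(\cdot, \sW\mu_t(u), \cdot)$, $g(\cdot, \sW\mu_T(u))$. Under \cref{assump:existence}, backward induction using Berge's maximum theorem (giving continuity of the value in the state, via continuity of $P_t$ in the action, compactness of $A$, and boundedness and continuity of $f,g$) combined with the Kuratowski-Ryll-Nardzewski measurable selection theorem (handling joint measurability in $(u,x)$) yields an optimal closed-loop policy $\widehat\pi \in \A_U$, which simultaneously gives the ``can be chosen closed-loop'' conclusion. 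I then define the correspondence $\Phi(\mu) := \{\L(U, X^{\widehat\pi}) : \widehat\pi \text{ is an optimal closed-loop response to } \mu\}$, whose fixed points are exactly the $W$-equilibria.

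Next, I would exhibit a compact convex subset $\K \subset \P_{\mathrm{unif}}([0,1]\times\C)$ invariant under $\Phi$. Tightness of the reachable set $\{\L(U, X^\pi) : \pi \in \V_U\}$ follows from iterating the uniformly tight transition kernels (\cref{assump:existence}(\ref{assump:tightness.of.transition})) against the tight initial-distribution kernel (\cref{assump:existence}(\ref{assump:tightness.of.initial})), yielding relative compactness in the weak topology; convexity follows from mixing policies via an external randomization on an enlarged probability space, which shows that convex combinations of reachable joint laws remain reachable. The closed convex hull $\K$ is the desired invariant set. Nonemptiness of $\Phi(\mu)$ is handled above, and convexity of $\Phi(\mu)$ follows from the same mixing argument applied to optimal policies.

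The heart of the proof, and the principal obstacle, is the upper hemicontinuity of $\Phi$: if $\mu^n \to \mu$ in $\K$ and $\nu^n \in \Phi(\mu^n)$ with $\nu^n \to \nu$, one must show $\nu \in \Phi(\mu)$. The delicacy is that $W$ is only an $L_1$ function, so the map $\mu \mapsto \sW\mu(u)$ is not continuous in the standard weak topology on $\P([0,1]\times\C)$. My remedy is to exploit the uniform first marginal: on $\P_{\mathrm{unif}}([0,1]\times\C)$, weak convergence reduces to Young-measure (stable) convergence, under which integration against Carathéodory integrands $(v,x) \mapsto W(u,v)\psi(x)$---with $\psi$ bounded continuous on $\C$ and the $v$-section $W(u,\cdot) \in L_1[0,1]$ providing a dominating function---passes to the limit for almost every $u$. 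Combining this with tightness of the joint label-state-action laws on $[0,1]\times\C\times A^T$, I can extract a subsequential limit policy that realizes $\nu$ under the $\mu$-driven dynamics; passing to the limit in $J_W(\mu^n,\cdot)$ and comparing against arbitrary alternative responses shows the limiting policy is $\mu$-optimal, whence $\nu \in \Phi(\mu)$.

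Finally, Kakutani-Fan-Glicksberg applied to $\Phi$ on the compact convex set $\K$ produces a fixed point $\widehat\mu$, which together with any associated optimal closed-loop response $\widehat\pi$ constitutes the desired $W$-equilibrium. The main technical burden is entirely concentrated in justifying the topology-and-continuity interaction in the third paragraph; the other steps are essentially routine adaptations of standard MDP and fixed point machinery.
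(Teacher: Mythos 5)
Your overall architecture matches the paper's: a Kakutani--Fan--Glicksberg fixed point for a best-response correspondence on a compact convex subset of $\P_{\mathrm{unif}}([0,1]\times\C)$, with tightness of the reachable laws coming from \cref{assump:existence}(\ref{assump:tightness.of.initial},\ref{assump:tightness.of.transition}), and you correctly identify the central delicacy --- that $\mu\mapsto\sW\mu(u)$ is only continuous because of the uniform first marginal (this is exactly \cref{lemma:continuity.of.graphon.operator}). However, there is a genuine gap in defining $\Phi(\mu)$ as the set of laws $\L(U,X^{\widehat\pi})$ induced by \emph{closed-loop} optimal responses. First, this set is not convex: the mixture $\lambda\,\L(U,X^{\pi_1})+(1-\lambda)\,\L(U,X^{\pi_2})$ of two Markovian-controlled \emph{path} laws is realized by flipping a coin at time $0$ and following $\pi_1$ or $\pi_2$ thereafter, which is a randomized open-loop policy; its path law is generically not that of any Markovian-controlled chain (two paths that cross and then diverge already defeat the Markov property), so your ``same mixing argument'' does not keep you inside the closed-loop class. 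Second, the closed-graph step has the same defect: a weak limit of laws of closed-loop-controlled processes need not be closed-loop-controlled, and the space of measurable maps $\T\times[0,1]\times\R^d\to\P(A)$ carries no compactness you can extract a ``limit policy'' from. The paper circumvents both problems by running the fixed point over relaxed controls: it compactifies on $\Omega=\V\times[0,1]\times\C$ with $\V=\P(A)^T$ compact, takes $\RR(\mu)$ to be the laws disintegrating as $R_\Lambda(d\pi)\widehat P^{\pi,\mu}(du,dx)$, so that convexity is linear in the mixing measure $R_\Lambda$ and closedness follows from continuity of $\pi\mapsto\widehat P^{\pi,\mu}$ (\cref{lemma:cont.of.Phat.pi}, \cref{lemma:weak_convergence.kernel}).

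Relatedly, your claim that backward induction ``simultaneously gives the closed-loop conclusion'' conflates two things. A closed-loop optimizer produced by backward induction (with arbitrary measurable tie-breaking) is optimal against $\widehat\mu$ but has no reason to \emph{reproduce} $\widehat\mu$ as its induced law, which is half of the equilibrium condition. The paper instead takes the specific relaxed equilibrium control $R$ and performs a Markovian projection, $\pi_t(U,X_t):=\E^{R}[\Lambda_t\mid U,X_t]$, showing this closed-loop policy preserves the objective value and all time-marginals $R\circ(U,X_t)^{-1}$; only then does the pair constitute a closed-loop $W$-equilibrium. You would need this mimicking step (or an equivalent) regardless of how the fixed point is obtained.
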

The theorem is proved with probabilistic compactification and Kakutani-Fan-Glicksberg fixed point theorem in  \cref{section:proof.of.existence}.

\subsection{Uniqueness of Equilibrium} \label{section:uniqueness}
\begin{assumption} \label{assump:uniqueness}
\begin{enumerate}[nosep, left=0pt]
    \item The state transition law $P$ does not depend on the measure argument. Then it reads $P_t:\R^d\times A\to \P(\R^d)$ for $t\in\T$.
    \item For each $t\in\T$, the running reward $f_t$ is separable in the measure and action argument: there exists $f^1_t:\R^d\times \M_+(\R^d)\to \R$ and $f^2_t:\R^d\times A\to \R$ such that
    $f_t(x,m,a) = f^1_t(x,m) + f^2_t(x,a)$.
    \item The optimal policy is unique. More specifically, for each $\mu\in \P_{\mathrm{unif}}([0,1]\times \C)$, the supremum $\sup_{\pi\in\V_U} J_{W}(\mu,\pi)$ is attained uniquely.
    \item The functions $f^1_t$ and $g$ satisfy the Larsy-Lions Monotonicity condition, in the  following sense: for any $m_1, m_2\in\P_{\mathrm{unif}}([0,1]\times\R^d)$, and $t\in\T$,
    \begin{align*}
    \int_{[0,1]\times\R^d} \big(g(x, &\sW m_1(u)) - g(x, \sW m_2(u)) \big) \\
    &(m_1 - m_2) (du, dx) \le 0,\\
    \int_{[0,1]\times\R^d} \big(f^1_t(x, &\sW m_1(u)) - f^1_t(x, \sW m_2(u)) \big) \\
    &(m_1 - m_2) (du, dx) \le 0
    .\end{align*}
\end{enumerate}
\end{assumption}
\cref{assump:uniqueness} are the graphon game analogies to the classic uniqueness assumptions for mean field games, see for example, \citep[Section 3.4]{CarmonaMFGI} and \citep[Section 8.6]{Lacker2018MFG}.

\begin{theorem} \label{theo:uniqueness}
    Suppose \cref{assump:existence,assump:uniqueness} hold. Then the graphon game admits a unique $W$-equilibrium.
\end{theorem}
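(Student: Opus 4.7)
The plan is to follow the classical Lasry--Lions monotonicity argument, adapted to the graphon setting, exploiting together the three structural restrictions in \cref{assump:uniqueness}: measure-free transitions $P_t$, separable running rewards $f_t = f^1_t + f^2_t$, uniqueness of the best response to any fixed background $\mu$, and monotonicity of $f^1_t$ and $g$.

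Suppose, for contradiction, there exist two $W$-equilibria $(\mu_1,\pi_1)$ and $(\mu_2,\pi_2)$. By optimality,
\begin{align*}
J_W(\mu_1,\pi_1) \ge J_W(\mu_1,\pi_2), \qquad J_W(\mu_2,\pi_2) \ge J_W(\mu_2,\pi_1).
\end{align*}
Because $P_t$ is $\mu$-free, the joint law $\L(U, X^{\pi})$ depends only on $\pi$ and on the initial distribution $\lambda$, not on the background measure. Combined with separability, the $\mu$-independent piece $C(\pi):=\sum_t \E[f^2_t(X^{\pi}_t, a^{\pi}_t)]$ factors out of $J_W(\mu,\pi)$, so after summing the two inequalities and canceling the $C(\pi_i)$ contributions I obtain
\begin{align*}
0 \le [J_W(\mu_1,\pi_1)-J_W(\mu_2,\pi_1)] + [J_W(\mu_2,\pi_2)-J_W(\mu_1,\pi_2)].
\end{align*}

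Next I apply the fixed-point identity $\mu_{i,t} = \L(U, X^{\pi_i}_t)$ to rewrite the right-hand side above as an integral purely against $\mu_{1,t}-\mu_{2,t}$:
\begin{align*}
\sum_{t\in\T} & \int_{[0,1]\times\R^d} [f^1_t(x,\sW\mu_1(u))-f^1_t(x,\sW\mu_2(u))](\mu_{1,t}-\mu_{2,t})(du,dx) \\
& + \int_{[0,1]\times\R^d} [g(x,\sW\mu_1(u))-g(x,\sW\mu_2(u))](\mu_{1,T}-\mu_{2,T})(du,dx).
\end{align*}
The Lasry--Lions monotonicity in \cref{assump:uniqueness} forces this quantity to be $\le 0$, so each of the two optimality inequalities above must actually hold with equality. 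In particular $\pi_2$ is also optimal against $\mu_1$, and by the uniqueness-of-best-response hypothesis $\pi_1=\pi_2$. Since the transition is $\mu$-free, $\L(U,X^{\pi_i})$ depends only on $\pi_i$ and $\lambda$, so $\mu_1=\L(U,X^{\pi_1})=\L(U,X^{\pi_2})=\mu_2$.

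The main obstacle is the bookkeeping in the second step: verifying that the difference $J_W(\mu_1,\pi_1)-J_W(\mu_2,\pi_1)$ reduces to an integral against the single measure $\mu_{1,t}$ rather than some mixed law. This uses both parts of \cref{assump:uniqueness} in an essential way, the $\mu$-free transitions so that $\L(X^{\pi_1})$ is identical in $J_W(\mu_1,\pi_1)$ and $J_W(\mu_2,\pi_1)$, and separability to strip the $a^{\pi_1}$-dependent contribution. Past this rearrangement the remaining steps, including the passage from equality of the Lasry--Lions pairing to equality of optimal policies and then of measures, are essentially mechanical.
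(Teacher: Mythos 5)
Your proof is correct and takes essentially the same route as the paper's: the same summation of the two optimality inequalities, the same rearrangement (using measure-free transitions and separability) into a Lasry--Lions pairing against $\mu_1-\mu_2$, and the same conclusion via monotonicity. The only difference is cosmetic---the paper invokes the uniqueness-of-best-response assumption at the outset to make the two inequalities strict and derive an immediate contradiction with the $\le 0$ pairing, whereas you invoke it at the end to upgrade the forced equalities to $\pi_1=\pi_2$ and hence $\mu_1=\mu_2$; both packagings are valid.
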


The proof follows a standard argument in MFG literature, see \cref{section:proof.of.uniqueness}.



\subsection{Approximate Equilibrium for Finite Player Games} \label{section:approx.eqbm}


Let $\widehat{\pi}:\T\times[0,1]\times\R^d\to\P(A)$ be the equilibrium optimal closed-loop policy of the graphon game associated with graphon $W$, and we construct an $n$-player game policy from $\widehat{\pi}$ as follows. Assign player $i$ the policy
\begin{align}
    \pi^{n,\textbf{u}^n,i}(t,x_1,\dots,x_n) := \widehat{\pi}(t,u^n_i, x_i)
,\end{align}
and $\bm{\pi}^{n,\textbf{u}^n}:=(\pi^{n,\textbf{u}^n,1}, \dots, \pi^{n,\textbf{u}^n,n})\in (\A_n)^n$. Define
\begin{align}
    \epsilon^n_i(\textbf{u}^n) := \sup_{\beta\in \A_n} J_i(\bm{\pi}^{n,\textbf{u}^n,-i},\beta) - J_i(\bm{\pi}^{n,\textbf{u}^n})
,\end{align}
and $\bm{\epsilon}^n(\textbf{u}^n):=(\epsilon^n_1(\textbf{u}^n),\dots, \epsilon^n_n(\textbf{u}^n))$. $\epsilon^n_i(\textbf{u}^n)$ is the largest reward improvement player $i$ could achieve by changing her own policy, when all other players follow policies $\bm{\pi}^{n,\textbf{u}^n}$.
By definition \ref{def:finite.player.game.eqbm}, $\bm{\pi}^{n,\textbf{u}^n}$ is an $\bm{\epsilon}^n(\textbf{u}^n)$-equilibrium of the $n$-player game. We need the following additional assumptions.
\begin{assumption} \label{assump:general.kernal.approx.eqbm}
\begin{enumerate}[nosep, left=0pt]
\item \label{assump:approx.eqbm.denseness}
$\xi^n\in\R_{+}^{n\times n}$ is a sequence of matrix with 0 diagonals such that $W_{\xi^n}\to W $ in strong operator topology, and
\begin{align} \label{assumpeq:interaction.matrix.second.moment}
    \lim_{n\to\infty} \frac{1}{n^3}\sum_{i,j=1}^n (\xi^n_{ij})^2 = 0
.\end{align}
\item For each $t\in\T$, the transition dynamic $P_t:\R^d\times \M_+(\R^d) \times A\rightarrow\P(\R^d)$
is jointly continuous for all $t\in\T$.
\end{enumerate}
\end{assumption}
The next main result demonstrates that the $n$-player game policy $\bm{\pi}^{n,\bm{u}^n}$ constructed from the graphon game equilibrium optimal policy $\widehat{\pi}$ forms an approximate equilibrium, and it converges to the true equilibrium in an average sense as the number of players $n\to\infty$.

\begin{theorem} \label{theo:approx.eqbm}
    Suppose \cref{assump:existence,assump:general.kernal.approx.eqbm} hold. For each $n\in\N_+$, let $\mathbf{U}^n:=(U^n_1, \dots, U^n_n)$ where $U^n_i\sim \mathrm{unif}(I^n_i)$ and $U^n_i$ is independent of $U^n_j$ for $i\ne j$. Then we have
    \begin{align} \label{eq:approx.eqbm}
        \lim_{n\to\infty} \frac{1}{n} \sum_{i=1}^n \E[\epsilon^n_i(\mathbf{U}^n)] = 0
    .\end{align}
\end{theorem}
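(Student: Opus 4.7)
The approach is a coupling / propagation-of-chaos argument that compares the $n$-player dynamics under $\bm{\pi}^{n,\mathbf{U}^n}$ with a decoupled virtual system built from the graphon equilibrium, combined with a stability estimate showing that a single player's unilateral deviation cannot substantially alter her own weighted neighborhood measure.

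First, conditional on $\mathbf{U}^n$, I would construct independent state paths $\widetilde{X}^1,\dots,\widetilde{X}^n$ such that $\widetilde{X}^i$ follows the graphon-equilibrium dynamics at label $U^n_i$: initial law $\lambda_{U^n_i}$, closed-loop policy $\widehat{\pi}(\cdot,U^n_i,\cdot)$, and transition kernel $P_t(\cdot,\sW\widehat{\mu}_t(U^n_i),\cdot)$. By construction $\L(U^n_i,\widetilde{X}^i)=\widehat{\mu}$ and the family is conditionally independent across $i$. A variance calculation for $\widetilde{M}^{n,i}_t := \frac{1}{n}\sum_j \xi^n_{ij}\delta_{\widetilde{X}^j_t}$ tested against any bounded measurable $\phi$ yields an error of order $\frac{1}{n^2}\sum_j(\xi^n_{ij})^2$; averaging over $i$ this vanishes by \eqref{assumpeq:interaction.matrix.second.moment}. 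Combining this with the strong-operator convergence $W_{\xi^n}\to W$ applied to bounded test functions of the form $v\mapsto\langle\widehat{\mu}^v_t,\phi\rangle$ gives $\widetilde{M}^{n,i}_t\to \sW\widehat{\mu}_t(U^n_i)$ weakly in an $L^1$-averaged sense. An induction in $t$, leveraging the joint continuity of $P_t$ from \cref{assump:general.kernal.approx.eqbm} and the tightness in \cref{assump:existence}, then couples the actual path $X^{n,i}$ with $\widetilde{X}^i$ so that $(U^n_i, X^{n,i})$ is close in law to $(U^n_i,\widetilde{X}^i)$ on average. Continuity and boundedness of $f_t$ and $g$ yield
\[
\tfrac{1}{n}\sum_{i=1}^n \E\big[J^i(\bm{\pi}^{n,\mathbf{U}^n})\big] \;\longrightarrow\; \E[V(U)],
\]
where $V(u):=\sup_{\pi} \E\big[\sum_t f_t(X_t,\sW\widehat{\mu}_t(u),a_t)+g(X_T,\sW\widehat{\mu}_T(u))\mid U=u\big]$ is the single-agent graphon value at label $u$, attained by $\widehat{\pi}(\cdot,u,\cdot)$ by definition of the graphon equilibrium.

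For the deviation term, fix $i$ and let $\beta\in\A_n$ be any unilateral deviation. Player $i$'s action affects each other player $j\neq i$ only through the summand $\xi^n_{ji}/n\cdot\delta_{X^{n,i,\beta}_t}$ in $M^{n,j}_t$, a perturbation of total-variation norm at most $2\xi^n_{ji}/n$; by joint continuity of $P_t$ this propagates to a small perturbation of $\L(X^{n,j})$ and, closing the loop, to a small perturbation of $M^{n,i,\beta}_t$ seen by player $i$ relative to the undeviated $M^{n,i}_t$. The averaged indirect effect is controlled by Cauchy--Schwarz using $\tfrac{1}{n^3}\sum_{i,j}(\xi^n_{ij})^2\to 0$. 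Hence player $i$'s best-response problem reduces asymptotically to a single-agent MDP in the environment $\{\sW\widehat{\mu}_t(U^n_i)\}_t$, whose optimal value is $V(U^n_i)$, attained by $\widehat{\pi}(\cdot,U^n_i,\cdot)$. Therefore $\tfrac{1}{n}\sum_i \E\big[\sup_\beta J^i(\bm{\pi}^{n,\mathbf{U}^n,-i},\beta)\big]\to \E[V(U)]$, and subtracting the base-policy average established above yields \eqref{eq:approx.eqbm}.

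The hard part will be this deviation step. Admissible deviations $\beta\in\A_n$ may depend on the full state vector, so in principle $\beta$ could try to exploit correlations induced by the interaction network; ruling this out rigorously---together with the propagation of $\beta$'s own perturbation through the $n^{-1}$-weighted edges---requires a delicate combined estimate that leverages both the chaos property of the virtual system and the second-moment control \eqref{assumpeq:interaction.matrix.second.moment}, all while only continuity (not Lipschitz regularity) of $P_t$, $f_t$, and $g$ is available, forcing the quantitative bounds to be obtained via tightness and Skorohod-type arguments rather than direct contraction.
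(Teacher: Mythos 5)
Your plan follows essentially the same route as the paper's proof: an auxiliary system of conditionally independent paths distributed as the graphon equilibrium given the labels (the paper's $Y^{n,i}$ are your $\widetilde{X}^i$), a conditional-variance estimate driven by \eqref{assumpeq:interaction.matrix.second.moment} combined with the strong-operator convergence $W_{\xi^n}\to W$ to show the weighted empirical neighborhood measures converge to $\sW\widehat{\mu}_t(U^n_i)$, a time induction transferring this to the interacting system, and a reduction of each player's deviation problem to the single-agent problem in the frozen environment.

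The two points you explicitly defer are exactly where the paper does the remaining work, and as stated your plan does not close them. First, the claim that $V(u)$ is attained by $\widehat{\pi}(\cdot,u,\cdot)$ ``by definition of the graphon equilibrium'' is not by definition: the equilibrium condition is an aggregate optimality over the joint law of $(U,X)$, and deducing optimality conditional on $U=u$ for a.e.\ $u$ requires a disintegration argument (\cref{lm:optimality.restricted.to.u} and \cref{remark:implied.single.label.optimum} in the paper), which in its open-loop form is also what rules out deviations exploiting other players' states --- your ``correlations'' worry. Second, the limit $\frac1n\sum_i\E\bigl[\sup_{\beta} J^i(\bm{\pi}^{n,\mathbf{U}^n,-i},\beta)\bigr]\to\E[V(U)]$ needs the comparison errors to be controlled uniformly in $\beta$ before averaging; with only joint continuity of $P_t$, $f_t$, $g$ you cannot propagate the $O(\xi^n_{ji}/n)$ total-variation perturbations quantitatively, so Cauchy--Schwarz on the second moments does not by itself produce the bound you describe. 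The paper instead dominates the $\beta$-dependent errors by $\beta$-free functions $h_1,h_2$ (suprema of moduli of continuity of $f$ and $g$ over the action space and a compact set of paths supplied by tightness), applies the propagation-of-chaos proposition to those, and absorbs the non-compact remainder into an $O(\eta)$ term sent to zero at the end. Your skeleton is the right one, but without these two devices the deviation half of the argument is a plan rather than a proof.
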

The proof is in \cref{section:proof.of.approx.eqbm}. Equation \eqref{eq:approx.eqbm} can be equivalently written as $\epsilon^n_{I^n}(\mathbf{U}^n)\to 0$ in probability, where $I^n\sim \mathrm{unif}([n])$. Intuitively, for randomly assigned labels $\mathbf{U}^n$, and a player $I^n$ uniformly chosen on $[n]$, the error is small. As the number of player $n\to\infty$, the collection of $\mathbf{U}^n$ and player label $I^n$ such that the error cannot be controlled becomes a measure 0 set.
\begin{remark} \label{remark:dense.graphs}
    Equation \eqref{assumpeq:interaction.matrix.second.moment} is a very mild graph denseness condition and is satisfied by many commonly-encountered finite graphs. The assumption $W_{\xi^n}\to W$ also poses denseness restrictions on the underlying graphs of interaction matrix $\xi^n$, as the existence of a graphon limit implicitly implies that the sequence of finite graphs are dense enough. We give some examples and a detailed discussion on dense graph sequences in \cref{sec:graph.sequence.convergence}.
\end{remark}

\section{Learning Scheme and Sample Complexity}\label{section:learning.scheme}

We now develop a scheme for learning the stationary equilibrium of infinite-horizon graphon games (\cref{sec:infinite.horizion.setting}).
Throughout the section we assume finite state space $\X$ and action space $A$.

\subsection{Finite Classes of Label Space}
To handle the continuous label space algorithmically, one generally needs function approximation techniques such as linear function approximation or neural networks, which is beyond the scope of this work. 
For the development and analysis of our algorithms, we discretize the label space $[0,1]$ into $D$ classes of types of players $\mathcal{U}\subset [0,1]$ such that $|\mathcal{U}| = D < \infty$.  
We denote $\mathcal{U} \coloneqq \{u_1,\ldots, u_{D}\} $, and define projection mapping $\Pi_{D}: [0,1] \to \mathcal{U}$. Denote the inverse image $I_{u_d}\coloneqq \Pi_{D}^{-1}(u_{d})\subset [0,1]$.
A simple example is the uniform quantization: $[0,1]$ is divided into $D$ bins $\{I^D_d\}_{d=1}^D$, and $\Pi_D$ maps each bin to its midpoint:
\begin{align} \label{eq:uniform.quantization}
  \Pi_{D}(u) = \sum_{i=1}^{D} \frac{2i-1}{2D}{1}_{\{u\in I_{i}^{D}\}}
.\end{align}

As we are only able to learn measures on the finite discretization $\mathcal{U}$, we define $\bm{\Pi}_{D}:\P(\X)^{\mathcal{U}}\to\P_{\mathrm{unif}}([0,1]\times\X)$ as follows: for any $M=\{M^{u_{d}}\}_{d=1}^D$, $\bm{\Pi}_{D}M$ is the measure $\mathrm{Leb}\otimes \nu$, where $\nu$ is a probabilistic kernel given by $\nu(u) \coloneqq \sum_{d=1}^{D}M^{u_{d}} 1_{\{u\in I_{u_d}\}}$.

\subsection{Approximate Fixed-Point Iteration}


Our learning scheme follows fixed-point iteration (FPI), which is widely used for learning (G)MFGs \cite{guo2019learning,cui2022learning,zhang2023learning}. An FPI represents an update of the game: given the population measure, the representative player first finds the optimal policy in reaction to this population, i.e., $\Gamma_1:\P_{\mathrm{unif}}([0,1]\times\X)\to \A_U$, $\Gamma_1(\mu) \coloneqq \mathrm{arg max}_{\pi\in\A_U} J_W(\mu,\pi)$. As everyone in the population reacts similarly, the population is then updated to the induced state distribution of the acquired policy, i.e.,
$\Gamma_2:\A_U\times P_{\mathrm{unif}}([0,1]\times\X)\to\P_{\mathrm{unif}}([0,1]\times\X)$, $\Gamma_2(\pi, \mu) := \L(U, X^{\pi})$.
Then, the FPI is given by $\Gamma(\mu) \coloneqq\Gamma_2(\Gamma_1(\mu), \mu)$, and the equilibrium population measure $\widehat{\mu}$ satisfies $\widehat{\mu}=\Gamma(\widehat{\mu})$.
However, the FPI operators can be hard to implement. As the environment ($P$ and $f$) is unknown, 
$\Gamma_1$ and $\Gamma_2$ are not directly accessible and need to be approximated. The general approximate FPI scheme is presented in \cref{alg:approx-fpi}.

\cref{alg:approx-fpi} provides a general framework that can incorporate various learning algorithms for the two evaluation steps as subroutines,
with (i) and (ii) approximating $\Gamma_1$ and $\Gamma_2$ respectively.
If access to a state process generator (called an oracle) is assumed, we may generate the state variable under any control and population measure for arbitrary times, and \cref{alg:approx-fpi} recovers the algorithms used in prior work \cite{cui2022learning,zhang2023learning}.

\begin{algorithm}[ht]
  \caption{Approximate FPI for GMFGs} \label{alg:approx-fpi}
    \begin{algorithmic} 
			\STATE Initialize policy estimate $\{\pi^{0}_{d}\}_{d=1}^{D}$ and population estimate $\{M^{0}_{d}\}_{d=1}^{D}$ for all label classes $d\in[D]$
			\FOR{$k \leftarrow 0$ to $K-1$}
				\FOR{$d \leftarrow 1$ to $D$}
					\STATE (i) Evaluate approximate optimal policy $\pi^{k+1}_{d}$ in reaction to $M^k$\ \ (ii) Evaluate approximate population measure $M^{k+1}_{d}$ induced by $\pi^{k+1}_{d}$
				\ENDFOR
			\ENDFOR
			\STATE Return $\left\{ \pi_d^{K}  \right\}$ and $\{M^{K}_{d}\}$
		\end{algorithmic}
\end{algorithm}

We next provide the first non-asymptotic analysis for $D$-class FPI scheme given the following assumptions.


\begin{assumption} \label{assump:general.algo}
\begin{enumerate}[nosep, left=0pt]
    \item The transition kernel and reward function are uniformly $L_P,L_f$ Lipschitz w.r.t. the measure argument respectively%
			\footnote{Finite signed measures on finite space can be equivalently expressed as a vector, and the total variation norm is equivalent to the $\ell_1$ norm.}%
			:
    \begin{align*}\hspace{-20pt}
        &\sup_{x,a} \left| f(x,m_1,a)-f(x,m_2,a) \right| \!\le\! L_f\|m_1 - m_2\|_{\mathrm{TV}},\\
        &\sup_{x,a}  \|P(x,m_1,a) - P(x,m_2,a)\|_{\mathrm{TV}} \!\le\! L_{P}\|m_1 - m_2\|_{\mathrm{TV}}
    .\end{align*}
    \item \label{assump:graphon.continuity}
    There exists $L_d$ such that
    \begin{align*}
        \sup_{u,v\in[0,1]} |W(u,v) - W(\Pi_{D}(u),v)| \le L_{d}/D
    .\end{align*}
	\item \label{assump:contractive.FPI}
The FPI operator $\Gamma$ is a contraction mapping: there exists $\kappa\in(0,1)$ such that $\Vert\Gamma(\mu_1)- \Gamma(\mu_2)\Vert_{\mathrm{TV}} \le (1-\kappa) \Vert\mu_1- \mu_2\Vert_{\mathrm{TV}}$ for any $\mu_1,\mu_2 \in \P_{}([0,1]\times \X)$.
\end{enumerate}
\end{assumption}
\cref{assump:general.algo}(\ref{assump:graphon.continuity}) ensures label classes $\mathcal{U}$ are a good approximation of the label space $[0,1]$. An example that satisfies this is the uniform quantization $\Pi_{D}$ in \eqref{eq:uniform.quantization} if the graphon is Lipschitz continuous in the first argument.
The contraction mapping along with the Lipschitzness assumptions are limited but unfortunately necessary in the complexity analysis. We give a brief discussion on this assumption and different types of fixed-point theorems in \cref{section:fixed.point.theorems}.

Suppose \cref{assump:general.algo} holds, \cref{alg:approx-fpi} with exact evaluation steps needs at most $D = O(\kappa^{-1}\epsilon^{-1})$ classes and $K = O(\kappa^{-1}\log \epsilon^{-1})$ iterations to achieve an $\epsilon$-approximate equilibrium. This claim is formalized in \cref{theorem:sample.complexity}.


\subsection{Online Oracle-Free Learning}
\label{section:online.oracle.free.learning}
An oracle is defined to be a state process generator that could return the distribution of a player's next state, or a device that could collect the next state for a large number of players at the same time regardless of communication costs and asynchronous delays. An oracle-free algorithm \cite{Angiuli2022UnifiedRQ} is one that does not involve an oracle.
In the following, we present an online oracle-free subroutine for the approximate evaluation steps in \cref{alg:approx-fpi} by specifying a concrete implementation of the two evaluation steps (i) and (ii).
Specifically, we use SARSA \cite{sutton2018reinforcement}, a value-based reinforcement learning method, for policy estimation, and Markov chain Monte Carlo (MCMC) for population estimation.

For policy estimation, we maintain a Q-function: $\mathcal{U} \times \mathcal{X} \times A \to \R$, with entry $Q_{d}(x,a)$ estimating the expected return starting with the state-action pair $(x,a)$ conditional on label being $u_d$.
Let $\Q$ be the collection of all Q-functions.
To obtain the policy from a Q-function, we assume access to a Lipschitz continuous policy operator $\Gamma_{\pi}: \Q\to\A_{U}$, i.e., for any $Q_1,Q_2\in \Q$, there exists a constant $L_{\pi}$ such that
\begin{equation}\label{assump:Lipschitz.Gamma.pi}
	\resizebox{.88\hsize}{!}{$\displaystyle
	\sup_{u,x}\! \left\|\left( \Gamma_{\pi}(Q_1) \!-\! \Gamma_{\pi}(Q_2) \right)\!(u,x) \right\| _{\mathrm{TV}} \!\le\! L_{\pi}\!\left\| Q_1\!-\!Q_2 \right\|_2
	.$}
\end{equation}
An example policy operator satisfying \eqref{assump:Lipschitz.Gamma.pi} is the softmax function, with its temperature parameter controlling the constant $L_{\pi}$ \cite{gao2017properties}.
Given $\Gamma_{\pi}$, SARSA converges to the Q-function corresponding to the optimal policy in $\Gamma_{\pi}(\Q)\subset \A_{U}$ \cite{zou2019finite}.

\begin{remark}
	Utilizing a Lipschitz continuous policy operator, $\Gamma_1$ returns the optimal Q-function instead of a policy; and \cref{assump:general.algo}(\ref{assump:contractive.FPI}) can be relaxed to only requiring $\Gamma_1$ and $\Gamma_2$ to be Lipschitz continuous with constants $L_1$ and $L_2$. Then, we can choose a sufficiently smooth policy operator such that $L_{\pi}L_1L_2 < 1$, making the FPI operator $\Gamma(\mu) \coloneqq \Gamma_2(\Gamma_{\pi}(\Gamma_1(\mu)),\mu)$ contractive.
\end{remark}

For population estimation, we maintain an M-function: $\mathcal{U} \to \P(\mathcal{X})$, with entry $M_{d}$ estimating the population measure of the representative player conditional on label $u_d$.
Since $\mathcal{U} \times \mathcal{X} \times A$ is a finite space, both Q- and M-functions can be represented by tables.
Being fully online, SARSA and MCMC can update the Q- and M-functions using the same online samples without the need of any oracle.
Specifically, we execute $H$ updates for the evaluation subroutine of \cref{alg:approx-fpi}.
At each step $\tau=0,\dots,H-1$, the representative agent with label $u_d$ at $x_{\tau}$ samples its action $a_{\tau} \sim \Gamma_{\pi}(Q^{k,\tau}_{d})$, reward $r_{\tau} = f(x_{\tau},\sW \bm{\Pi}_{D} M^{k,0}(u_d), a_{\tau})$, next state $x_{\tau+1} \sim P(x_{\tau}, \sW \bm{\Pi}_{D} M^{k,0}(u_d), a_{\tau})$, and next action $a_{\tau+1}\sim \Gamma_{\pi}(Q^{k,\tau}_{d})$.
Using these observations, the Q- and M-functions are updated simultaneously as follows:
\begin{equation}\label{alg:online}
\begin{aligned}
    Q^{k,\tau +1}_{d}(x_{\tau},a_{\tau}) \leftarrow& (1-\alpha_{\tau}) Q^{k,\tau}_{d}(x_{\tau},a_{\tau}) \\
        & + \alpha_{\tau} \left(r_{\tau} + \gamma Q^{k,\tau}_{d}(x_{\tau+1},a_{\tau+1})  \right),\\
    M^{k,\tau +1}_{d} \leftarrow& (1-\beta_{\tau})M^{k,\tau}_{d} + \beta_{\tau} \delta_{x_{\tau+1}}
,\end{aligned}
\end{equation}
where the Q- and M-functions are indexed by the outer iteration $k$ and the inner evaluation step $t$, and $\alpha _{\tau}$ and $\beta_{\tau}$ are step sizes.
Substituting the Q-function $Q^{k,\tau}_{d}$ with the optimal Q-function $Q^{\mu^{k,\tau}}$ associated with the population measure $\mu^{k,\tau}=\bm{\Pi}_{D}M^{k,\tau}$, we recover the FPI scheme in \cref{alg:approx-fpi}.
Substituting (i) and (ii) in \cref{alg:approx-fpi} with $H$ updates using \cref{alg:online}, we obtain the first fully online algorithm for learning GMFGs. Notably, our method is oracle-free in the sense that we do not assume access to an optimal policy calculator or a state process generator.
Additionally, in contrast to FPI-like methods in prior work where (i) and (ii) in \cref{alg:approx-fpi} are executed sequentially, \cref{alg:online} updates both policy and population concurrently using the same samples, enhancing the sample efficiency. \cref{alg:apx-online} in \cref{section:proof.of.learning.sample.complexity} is an example of a concrete realization of the aforementioned ideas.

As our method is fully online, we need the following ergodicity assumption \cite{zou2019finite}.
\begin{assumption} \label{assump:ergodic}
    For any $\pi\in\Gamma_{\pi}(\Q)$ and $M\in \P(\mathcal{X})^{\mathcal{U}} $, the Markovian state dynamic is ergodic: there exists $\nu\in \P_{\mathrm{unif}}(\mathcal{U}\times\X)$ and $c_1>0$, $c_2\in(0,1)$ such that
		\[
                \sup_{x}\Vert \L(X_{\tau}\,|\, X_0=x) - \nu \Vert_{\mathrm{TV}} \le c_1 c_2^{\tau}
		,\]
		where the dynamic of $X$ is determined by policy $\pi$ and neighborhood measure $\sW \bm{\Pi}_{D}M$.
\end{assumption}
Finally, we give the sample complexity of our algorithm.
\begin{theorem} \label{theorem:sample.complexity}
	Let $\widehat{\mu}$ be the stationary equilibrium measure of the infinite horizon GMFG. Suppose \cref{assump:general.algo,assump:ergodic} hold.
	For any initial estimate $M^{0,0} \in \P(\X)^{\mathcal{U}}$, \cref{alg:approx-fpi}, combined with \cref{alg:online} and step sizes $\alpha _{\tau}, \beta_{\tau} \asymp 1 /\tau$, 
finds an $\epsilon$-approximate equilibrium distribution $M^{K,H}$ such that $\E\|\bm{\Pi}_{D}M^{K,H} - \widehat{\mu}\|_{\mathrm{TV}} \le \epsilon$, with the number of iteration being at most
  \[
    \begin{aligned}
      &K = O\left( \kappa^{-1} \log \epsilon^{-1} \right), \quad 
      D = O\left( \kappa^{-1} \epsilon^{-1} \right), \quad \\
      &H = O\left( \kappa^{-3} \epsilon^{-3}\log \epsilon^{-1} \right)
    ,\end{aligned}
  \]
  giving a total sample complexity of $O\left( \kappa^{-5} \epsilon^{-4}\log^2 \epsilon^{-1} \right)$.
\end{theorem}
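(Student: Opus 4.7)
The plan is to bound $\E\|\bm{\Pi}_D M^{K,H} - \widehat{\mu}\|_{\mathrm{TV}}$ by decomposing it into three sources of error---label discretization, outer-loop FPI contraction, and inner-loop stochastic approximation---and then balancing $D,K,H$ so that each contributes $O(\epsilon)$. Let $\widehat{M}_D \in \P(\X)^{\mathcal{U}}$ denote the fixed point of the FPI restricted to the $D$-class label space, which exists because the discretized operator inherits the $(1-\kappa)$-contraction from \cref{assump:general.algo}(\ref{assump:contractive.FPI}). The triangle inequality gives
\[
\E\|\bm{\Pi}_D M^{K,H} - \widehat{\mu}\|_{\mathrm{TV}} \le \E\|\bm{\Pi}_D M^{K,H} - \bm{\Pi}_D \widehat{M}_D\|_{\mathrm{TV}} + \|\bm{\Pi}_D \widehat{M}_D - \widehat{\mu}\|_{\mathrm{TV}}.
\]
For the second term I would use \cref{assump:general.algo}(\ref{assump:graphon.continuity}) to show that $\|\sW\mu - \sW(\bm{\Pi}_D^* \mu)\|_{\infty} = O(L_d/D)$ uniformly in $\mu \in \P_{\mathrm{unif}}([0,1]\times\X)$, and then compose with the Lipschitz $P$ and $f$ to realize the discretized FPI as an $O(L_d/D)$ perturbation of $\Gamma$; a standard fixed-point sensitivity estimate for a $(1-\kappa)$-contraction then delivers $\|\bm{\Pi}_D \widehat{M}_D - \widehat{\mu}\|_{\mathrm{TV}} = O(L_d/(\kappa D))$, so $D = O(\kappa^{-1}\epsilon^{-1})$ suffices.

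For the first term I would let $\eta^k$ denote the total-variation error of the inner subroutine against the true $\Gamma$ evaluated on the $D$-class space at outer iteration $k$, and unroll the contractive recursion
\[
\E\|\bm{\Pi}_D M^{k+1,H} - \bm{\Pi}_D \widehat{M}_D\|_{\mathrm{TV}} \le (1-\kappa)\,\E\|\bm{\Pi}_D M^{k,H} - \bm{\Pi}_D \widehat{M}_D\|_{\mathrm{TV}} + \E\eta^k.
\]
Iterating this bound from $k=0$ to $K-1$ yields $(1-\kappa)^K\,O(1) + \max_k \E\eta^k/\kappa$, so the choice $K = O(\kappa^{-1}\log \epsilon^{-1})$ kills the geometric term, and the inner subroutine needs only to guarantee $\max_k \E\eta^k = O(\kappa\epsilon)$. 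Note that the constant factor absorbed into $O(1)$ comes from the boundedness of total-variation distance between probability measures, so it is independent of $\epsilon$.

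Controlling $\eta^k$ is the technical heart of the argument and the main obstacle. Under \cref{assump:ergodic} and step sizes $\alpha_\tau, \beta_\tau \asymp 1/\tau$, the frozen-target analogues of the SARSA iterate $Q^{k,\tau}_d$ and the MCMC iterate $M^{k,\tau}_d$ each converge to their respective stationary targets at the classical Markovian-noise rate $O(\sqrt{\log H/H})$. In \eqref{alg:online}, however, both updates share a single trajectory whose transitions are driven by the current policy $\Gamma_\pi(Q^{k,\tau}_d)$ and neighborhood measure $\sW\bm{\Pi}_D M^{k,0}$, so the SARSA and MCMC targets drift as their own iterates evolve. My plan is to (i) freeze $M^{k,\tau}$, invoke the finite-time SARSA rate to bound the Q-error conditional on the current $M$; (ii) translate the Q-error into a policy error via the $L_\pi$-Lipschitz operator $\Gamma_\pi$; (iii) push the policy error through the $L_P$-Lipschitz kernel $P$ into a drift of the stationary measure that the MCMC targets; (iv) plug this drift into a perturbed Robbins-Monro recursion for $M^{k,\tau}$; and (v) close the two-timescale coupling via a martingale concentration argument to extract a joint high-probability bound. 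The resulting composite rate has the form $\E\eta^k = O(H^{-1/3}\sqrt{\log H})$; setting this $\le \kappa\epsilon$ gives $H = O(\kappa^{-3}\epsilon^{-3}\log\epsilon^{-1})$. The total sample budget $K\cdot D\cdot H = O(\kappa^{-5}\epsilon^{-4}\log^2\epsilon^{-1})$ then completes the theorem.
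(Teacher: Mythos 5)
Your overall architecture — three error sources (label discretization $O(1/D)$, outer contraction $(1-\kappa)^K$, inner stochastic approximation in $H$) balanced to give the stated $K,D,H$ — matches the paper's, and the final rates agree. But there is a genuine gap in how you set up the outer recursion. You introduce a fixed point $\widehat{M}_D$ of the discretized FPI and justify its existence by claiming the discretized operator ``inherits'' the $(1-\kappa)$-contraction from \cref{assump:general.algo}(\ref{assump:contractive.FPI}). This does not follow. The discretized FPI is not the restriction of $\Gamma$ to an invariant subset: it evaluates the graphon at $\Pi_D(u)$ rather than $u$, so it is a genuinely different operator, and $\bm{\Pi}_D\P(\X)^{\mathcal{U}}$ is not invariant under $\Gamma$ unless $W$ is itself a step graphon. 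What the discretization bounds actually give is $\sup_\mu\|\Gamma_D(\mu)-\Gamma(\mu)\|_{\mathrm{TV}}=O(1/D)$, which yields only a quasi-contraction $\|\Gamma_D\mu_1-\Gamma_D\mu_2\|_{\mathrm{TV}}\le(1-\kappa)\|\mu_1-\mu_2\|_{\mathrm{TV}}+O(1/D)$; that does not guarantee a fixed point of $\Gamma_D$ exists or is unique. Your recursion is also internally inconsistent: if $\eta^k$ measures the algorithm's error against the true $\Gamma$, then the contraction you invoke is that of $\Gamma$, whose fixed point is $\widehat{\mu}$, not $\bm{\Pi}_D\widehat{M}_D$. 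The paper sidesteps all of this by never introducing $\widehat{M}_D$: it compares directly to $\widehat{\mu}$, writes $\bm{\Pi}_D\widehat{\Gamma}\bm{\Pi}_D^\dagger\mu_{K-1}-\Gamma\widehat{\mu}$ as a per-step operator discrepancy plus $\Gamma(\mu_{K-1})-\Gamma(\widehat{\mu})$, bounds the former uniformly (\cref{prop:onestep.online.error}, which packages all three error sources as $O(D\log H/H+1/D^2)$ in squared TV) and contracts the latter via Young's inequality. You should restructure along those lines; your bias term then folds into the per-step perturbation rather than standing alone.

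The second soft spot is the inner-loop rate. You assert a composite rate $\E\eta^k=O(H^{-1/3}\sqrt{\log H})$ as the output of a two-timescale coupling argument, but both step sizes here are $\asymp 1/\tau$ (single timescale), and the rate is never derived. More importantly, your claimed rate carries no dependence on $D$, whereas in the paper's accounting the online-learning term is $O(D\log H/H)$ in squared TV — the factor of $D$ arises because the TV norm of $\bm{\Pi}_D(\widetilde{\mu}-\widetilde{M})$ aggregates $D$ per-label SARSA/MCMC errors, each of size $O(\log H/H)$ (\cref{lem:control}). That factor of $D=O(\kappa^{-1}\epsilon^{-1})$ is precisely what forces $H=O(\kappa^{-3}\epsilon^{-3}\log\epsilon^{-1})$ rather than $O(\kappa^{-2}\epsilon^{-2}\log\epsilon^{-1})$. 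Your $-1/3$ exponent happens to land on the same $H$, but by an unexplained mechanism; as written, the step from your five-point plan to the stated $H$ is not a proof.
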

We present a paraphrase of \cref{theorem:sample.complexity} in \cref{theorem:rephrase.of.sample.complexity} that incorporates the dependence on constants in \cref{assump:general.algo,assump:ergodic}. The proof as well as more details of our method are deferred to \cref{section:proof.of.learning.sample.complexity}.

\section{Numerical Experiments}
In this section, we apply our learning algorithm to three graphon game examples, namely, Flocking-, SIS- and Invest-Graphon. We first briefly introduce each game scenario, and present only the algorithm performance and graphon mean field equilibrium (GMFE) for Flocking-Graphon due to space limit. The problem formulation of each game is in \cref{section:experiment.setup}. The detailed numerical results are in \cref{sectionn:experiment.results}, including algorithm performance (e.g., exploitability, convergence) and visualizations for GMFE. All numerical experiments are conducted on Mac Air M2.

\paragraph{Flocking-Graphon} The Flocking-Graphon game \cite{Lacker2022ALF} studies the flocking behavior in a system where each agent makes decisions on its velocity, which in turn determines its position. We consider the game with $\X=[0,1]$, and a time horizon $\mathbb{T}=[0,1]$, under proper discretization. An agent with label $u$ and position $x$ randomly picks its velocity at time $t$ according to the policy $\pi_t(u,x)\in\P(A)$ for action space $A=[0, 1]$.
Each agent aims to minimize its own running cost determined by the velocity control and the agent's deviation from the population. 

\paragraph{SIS-Graphon} The SIS-Graphon game \cite{cui2022learning} models an epidemic scenario where agents can choose taking precautions to avoid being infected. The infected probability is determined by the agents' action (i.e., take precaution or not) and the number of infected neighbors.  

\paragraph{Invest-Graphon}
In the Invest-Graphon game model \cite{cui2022learning}, each firm aims to maximize its own profit, which is determined by the firm's investment strategies and other firms' product quality. 

\begin{figure}[ht]
    \centering
    \includegraphics[scale=.3]{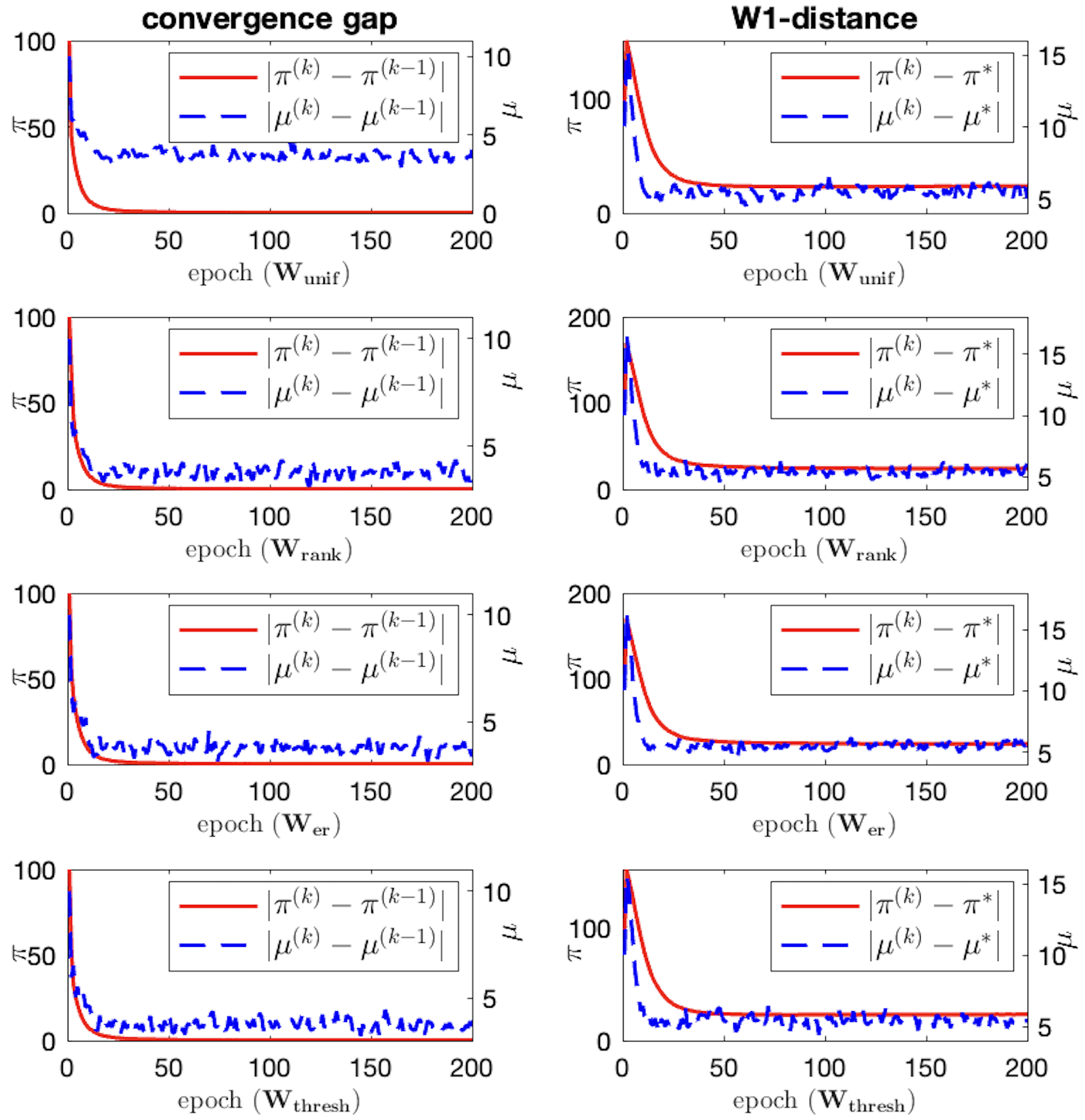}
	\caption{Algorithm performance (Flocking-Graphon)}
	\label{fig:flock_graphon_algo} 
\end{figure}

We test four types of graphons: uniform attachment graphon ($W_{\mathrm{unif}}(u,v)=1-\max(u,v)$), ranked attachment graphon ($W_{\mathrm{rank}}(u,v)=1-uv$), \Erdos\,graphon ($W_{\mathrm{er}}(u, v)=p$) and threshold graphon ($W_{\mathrm{thresh}}(u,v)=\mathbf{1}_{u+v<1}$). 
Figure \ref{fig:flock_graphon_algo} demonstrates the algorithm performance to solve the Flocking-Graphon. The x-axis denotes the epoch index $k$. We visualize the convergence gaps $|\mu^{(k)}-\mu^{(k-1)}|, |\pi^{(k)}-\pi^{(k-1)}|$, and the W1-distances $|\mu^{(k)}-\mu^{*}|, |\pi^{(k)}-\pi^{*}|$, which measures the closeness between the benchmark solution $(\pi^{*},\mu^{*})$ and results at each epoch. The benchmark solution is obtained by the equivalent class method in \cite{cui2022learning}. The results show that it takes around 50 epochs for our algorithm to converge. The convergence performance remains consistent for all four graphons. 

\begin{figure}[ht]
    \centering
    \includegraphics[scale=.35]{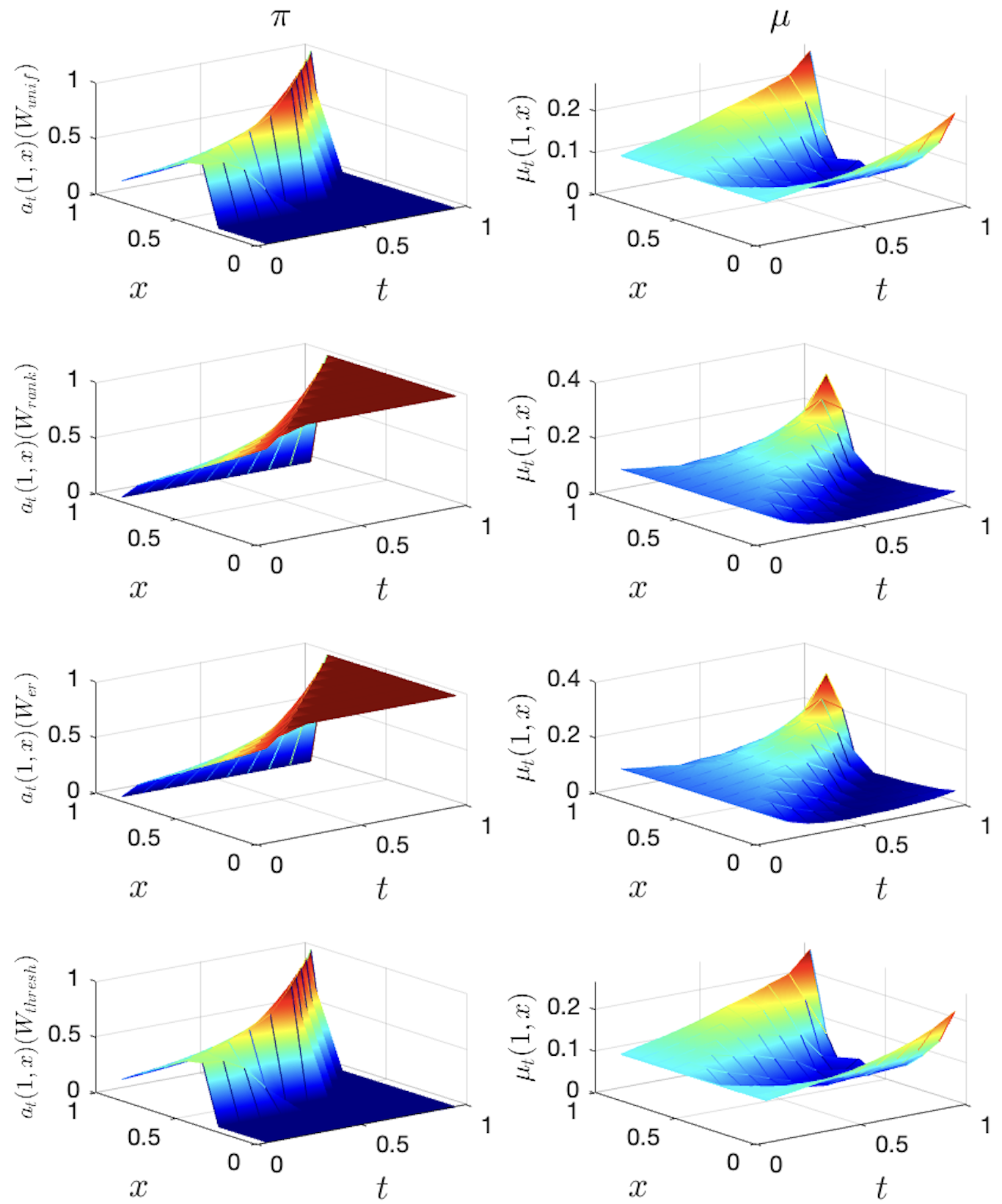}
	\caption{GMFE (Flocking-Graphon)}
	\label{fig:flock_graphon_mfe} 
\end{figure}

Figure \ref{fig:flock_graphon_mfe} shows the obtained GMFE for Flocking-Graphon. We visualize the policy and state density of the agent with label $U=1$ at equilibrium in a 3D plot. The x-axis denotes the space domain $\mathcal{X}$, and the y-axis is the time horizon $\mathbb{T}$. Agent with each label is initialized at $t=0$ uniformly over $\X$. Note that the GMFE is time-dependent, which is obtained by adapting our learning algorithm to solve graphon games with finite horizons (See \cref{alg:apx-online_finte} in \cref{section:proof.of.learning.sample.complexity}). The z-axis is the spatial-temporal velocity control $\alpha_t(1, x)$ and population density $\mu_t(1,x)$ of the agent with label $1$. The numerical results show that GMFEs associated with $W_{\mathrm{unif}}$ and $W_{\mathrm{thresh}}$ are similar. The flock behavior, i.e., the phenomenon that players gather together without central planning at some location as time goes by, occurs at position $x=0.6$ and the population density $\mu$ reaches a red peak around $0.35$ with velocity around $0.2$. When the agent's velocity reaches the maximum velocity $\alpha_{\max}=1$ (dark red), the population quickly dissipates (dark blue) and no flock behavior occurs.   

\section{Conclusion}
We offer a new general formulation of graphon games with one representative player in continuous state and action space. A comprehensive analysis on the equilibrium properties is proved with assumptions milder than previous work. We present a general approximate fixed-point iteration framework, and design an oracle-free algorithm along with the sample complexity analysis.

\section*{Impact Statement}
This work is motivated by the theoretical challenges in the analysis of graphon games. As a generalization to mean field games, graphon games is capable of modeling heterogeneous interactions among gaming participants, and this flexibility allows it to cover a broader range of applications in finance, economics, engineering, including for example high-frequency trading, social opinion dynamics and autonomous vehicle driving. By addressing rigorously the technical issues faced by games on networks, this work proposes a conceptually and mathematically concise formulation. The analysis provides concrete theoretical foundation for the mathematical properties, on top of which the algorithms empower the solvability of the system. With the comprehensive and self-consistent technical analysis, this work is capable of modeling system with large amount of agents and remain computationally efficient.






\bibliography{literature,ref_Di,lit_MFG_RL}
\bibliographystyle{icml2024}

\newpage
\appendix
\onecolumn

\abovedisplayskip=12pt plus 3pt minus 9pt
\abovedisplayshortskip=0pt plus 3pt
\belowdisplayskip=12pt plus 3pt minus 9pt
\belowdisplayshortskip=7pt plus 3pt minus 4pt

\section*{Organization of Appendix}
The appendix is outlined as follows.

\cref{section:additional.discussion} is a discussion section serving as a supplement to the concepts in the main paper. The topics include: the infinite horizon version of graphon game formulation (\cref{sec:infinite.horizion.setting}), formulating the graphon game into a mean field game with augmented state space (\cref{section:game.with.aug.space}), the degeneration of graphon game to mean field games with trivial graphon (\cref{section:degeneration.mfg}), time-variant interaction intensities (\cref{section:time-variant.iteraction.intensity}), dense graph sequences and examples (\cref{sec:graph.sequence.convergence}), fixed point theorems and the contraction mapping assumption (\cref{section:fixed.point.theorems}).

\cref{appendix:compare.with.related.works} provides a list of tables where we compare our novelties and improvements to prior work.

In \cref{appendix:comparison}, we define the continuum-player formulation (\cref{section:continuum.player.formulation}) and discuss in detail the aforementioned measurability issue residing in continuum-player formulation in \cref{section:comparison.two.graphon.games}. We then give a toy example in \cref{section:toy.example} to demonstrate the difference of the two formulations.

The following three sections are dedicated to the proof of analysis properties. The existence of equilibrium (\cref{theo:existence}) is proved in \cref{section:proof.of.existence}. The uniqueness of equilibrium (\cref{theo:uniqueness}) is proved in \cref{section:proof.of.uniqueness}. The approximate equilibrium (\cref{theo:approx.eqbm}) is proved in \cref{section:proof.of.approx.eqbm}.

In \cref{subappendix:algo} we give a concrete realization of algorithm discussed in \cref{section:online.oracle.free.learning}, and the rest of \cref{section:proof.of.learning.sample.complexity} is dedicated to the proof of the sample complexity of learning algorithms (\cref{theorem:sample.complexity}). Finally, we give the detailed problem setups for the numerical examples in \cref{section:experiment.setup}, and show the numerical results in \cref{sectionn:experiment.results}.

\section{Additional Discussion} \label{section:additional.discussion}

\subsection{Infinite Horizon Formulation} \label{sec:infinite.horizion.setting}
In this section we define the infinite horizon version of the representative-player graphon game, as appose the finite horizon version defined in \cref{section:problem.setup}. All analysis results in \cref{section:analysis} regarding existence, uniqueness and approximate equilibrium holds by adjusting the assumptions accordingly.

Let the graphon $W\in L^1_+[0,1]^2$ be given and fixed. Let $(\Omega, \F, \mathbb{F}, \PP)$ be a filtered probability space that support an $\F_0$-measurable random variable $U$ uniform on $[0,1]$, and a Markov process $X$ valued in $\R^d$. We understand $U$ as the label for the representative player, and $X$ as her state dynamic. Let the flow of label-state joint measures be $\mu\in\P_{\text{unif}}([0,1]\times \C)$, where the path space $\C=\prod_{i=0}^{\infty}\R^d$ is now a countable product of $\R^d$. $\mu_t\in \P_{\text{unif}}([0,1]\times \R^d)$ is the marginal under image $(u,x)\mapsto(u,x_t)$. 
Let the initial joint law $\lambda:=\L(U, X_0)\in\P_{\mathrm{unif}}([0,1]\times \R^d)$ be given. 

We still let $\A_U$ denotes the collection of \textit{time-variant} closed-loop (Markovian) policies $\N_+\times[0,1]\times\R^d\to \P(A)$. For any $\pi\in\A_U$, $(U, X)$ follows the transition dynamic
\begin{align*}
    &(U, X_0)\sim \lambda, \\
    &a_t\sim \pi_t(U, X_t), \qquad X_{t+1} \sim P(X_t, \sW\mu_t(U),  a_t)
,\end{align*}
at any time $t\in \N_+$. Note that the transition law $P$ is time-invariant. Let $f:\R^d\times\M_{+}(\R^d)\times A\to\R$ be the running reward and $\gamma\in(0,1)$ be a known discount factor. The objective of the representative player is to choose $\pi\in \A_U$ to maximize
\begin{align*}
	J_W(\mu, \pi) &= \E\bigg[ \sum_{t=0}^{\infty} \gamma^{t}f(X^{\pi}_t, \sW \mu_t(U), a_t)\bigg]
.\end{align*}
\begin{definition}
	We say that $(\widehat{\mu},\widehat{\pi})\in \P_{\mathrm{unif}}([0,1]\times\C)\times\A_U$ is a $W$-equilibrium if 
	\begin{align*}
		J_W(\widehat{\mu}, \widehat{\pi}) &= \sup_{\pi\in \V_U} J_W(\widehat{\mu}, \pi), \\
            \widehat{\mu} &= \L(U, X^{\widehat{\pi}})\quad 
	.\end{align*}
\end{definition}

If we do not fix an initial distribution $\lambda$, we may define a stationary equilibrium which is time independent: 
\begin{definition} \label{def:infinite-eqbm}
    We say that $(\widehat{\mu},\widehat{\pi})\in \P_{\mathrm{unif}}([0,1]\times\R^d)\times\A_U$ is a stationary $W$-equilibrium if 
	\begin{align*}
		J_W(\widehat{\mu}, \widehat{\pi}) &= \sup_{\pi\in \A_U} J_W(\widehat{\mu}, \pi)
		,\\
            \widehat{\mu} &= \L(U, X^{\widehat{\pi}}_t),\qquad \forall t\ge 0
	,\end{align*}
    where $\A_U$ now denotes the collection of time-invariant closed-loop policies $[0,1]\times\R^d\to \P(A)$. 
\end{definition}

Note that we need an additional ergodicity assumption of the Markov chain to show the existence of stationary $W$-equilibrium with the same proof argument in \cref{section:proof.of.existence}, i.e., the Markov chain admits a stationary distribution under any policy. A sufficient condition for ergodicity is given in \cref{assump:ergodic}.

\subsection{Game with Augmented State Space} \label{section:game.with.aug.space}
We give another view of a graphon game by reformulating it into a mean field game with an augmented state space. We view the label $U$ as a coordinate of the state, and it remains at the same value a.s. Let $\overline{X}_t=\binom{U}{X_t}\in\R^{d+1}$, where the state process space is augmented by one additional dimension. Any fixed $\mu\in\P_{\mathrm{unif}}([0,1]\times\C)$ can now be equivalently regarded as an element in $\P(\C^{d+1})$ where $\C^{d+1}=(\R^{d+1})^{T+1}$ is the augmented path space. More specifically, we denote $\mu_t:=\mu\circ(U,X_t)^{-1}\in\P(\R^{d+1})$, where $\circ$ is the pushforward. Given any graphon closed-loop policy $\pi\in\A_U$, define a mean field closed-loop policy $\overline{\pi}$ and a mean field Markovian transition law $\overline{P}$ as follows. For every $\overline{x}=\binom{u}{x}\in\R^{d+1}$, 
\begin{align*}
&\overline{\pi}_t(\overline{x})(da) := \pi_t(u, x)(da), \\
&\overline{P}_t(\overline{x}, m, a)(d\overline{y}) := \delta_{u}(dv)P_t(x, \sW m(u), a)(dy), \quad \forall\overline{y}=\binom{v}{y}
,\end{align*}
respectively and let $\overline{\lambda}(d\overline{y}) := \lambda(dv, dy)$ for any $\overline{y}=\binom{v}{y}$ be the mean field initial condition. Then $\overline{X}$ satisfies the dynamic
\begin{align*}
&\overline{X}_0 \sim \overline{\lambda}, \\
&   a_t\sim \overline{\pi}_t(\overline{X}_t), \qquad \overline{X}_{t+1}\sim\overline{P}_t(\overline{x}, \mu_t, a_t)
.\end{align*}
Define similarly for every $\overline{x}=\binom{u}{x}\in\R^{d+1}$ the reward functions
\begin{align*}
    &\overline{f}_t(\overline{x}, m, a) := f_t(x, \sW m(u), a), \\
    &\overline{g}(\overline{x}, m) := g(x, \sW m(u))
,\end{align*}
for all $t\in\T$. The objective is recast into
\begin{align*}
    J(\overline{\pi}) := \E\left[ \sum_{t\in\T} \overline{f}_t(\overline{X}^{\overline{\pi}}_t, \mu_t, a_t) + \overline{g}(\overline{X}^{\overline{\pi}}_T, \mu_T) \right]
.\end{align*}
Thus, we have obtained a classic mean field game problem associated with the new coefficients $\overline{\lambda}, \overline{P}_t, \overline{f}_t, \overline{g}$. Note that they are implicitly dependent on $W$. 

However, it is worth noticing that in most of the proofs for graphon games, this translation into mean field games with augmented state space does not simplify the mathematical analysis, and it is not appropriate to adapt the mean field game results directly. There are two main reasons \cite{Lacker2022ALF}:

Firstly, it requires the graphon $W\in L^1_+[0,1]^2$ to be continuous. To see this, recall that most of the results for classic mean field games assume the joint continuity of reward function, see e.g., \cite{CarmonaMFGI, Lacker2018MFG}. In particular, $\overline{f}_t(\overline{x}, m, a) := f_t(x, \sW m(u), a)$ is assumed to be continuous in the augmented state variable $\overline{x}=(u,x)^\top$. This requires that the graphon operator $\sW \mu$ viewed as a function 
\begin{align*}
    [0,1] \ni u \mapsto \sW \mu(u) \in \M_+(E)
\end{align*}
should be continuous, for any $\mu\in \P_{\mathrm{unif}}([0,1]\times E)$, which is satisfied by a continuous graphon. However, the graphon is in general not a continuous function. Indeed, many commonly encountered convergent graph sequence tends to a discontinuous graphon limit, see for instance examples in \citep[Section 11.4]{Lovasz2010}.

Second, in the analysis of approximate equilibrium, the model setting for the finite
player game does not fit into this augmented state space framework. Consider an $n$-player game associated with interaction matrix $\xi\in\R_+^{n\times n}$, and assign player $i$ the label $u_i\in I^n_i$. Recall the setting for finite player games in \cref{section:finite.player.game}, the running reward can be written as $f_t(X^{i}_t, \sW_{\xi} S_t(u_i), a^i_t)$, where $S$ is the empirical label-state measure defined in \eqref{eq:empirical.labelstate.measure}. On the other hand, let $\overline{X}^i_t = \binom{u_i}{X^i_t}$, and the running cost of player $i$ in the aformentioned augmented state space framework is
\begin{align*}
    \overline{f}_t(\overline{X}^i, S_t, a^i_t) := f_t(X^i, \sW S_t(u_i), a^i_t)
,\end{align*}
which is different from the original problem, as in the finite player game the graphon $W$ needs to be replaced with the step graphon $W_{\xi}$. However, it is not possible to incorporate this change in the augmented state space framework. As a result the augmented state space transformation fails to provide an approximate equilibrium result, which is a strong justification of the reasonableness of graphon game formulation.

In the continuous time setting \cite{Lacker2022ALF}, the augmented state space formulation provides an equivalent forward-backward PDE system for the graphon game, and thus provides another perspective to the problem formulation.

Actually the continuum-player graphon games may be transformed to a mean field game with augmented state space similarly, and many previous studies on continuum-player formulation relied on this \cite{cui2022learning,zhang2023learning} to show existence of equilibrium. 
However, they not only suffer from the two limitations mentioned above, but also encounter a critical measurability issue that representative-player formulation does not have, and this leads to difficulties in the proof. We will discuss this point in detail in \cref{appendix:comparison}.

\subsection{Degeneration to Mean-Field Games under a Trivial Graphon}
\label{section:degeneration.mfg}
When the graphon $W\equiv 1$, the interactions among players are symmetric, and we illustrate that our graphon game formulation degenerates to the classic mean field game.

Let the initial distribution $\lambda$, which is a product measure with the path space marginal $\lambda^{\circ}$, be given. Fix a population measure $\mu\in\P_{\mathrm{unif}}([0,1]\times\C)$ and assume it takes the product measure form: $\mu(du,dx)=du\times \nu(dx)$ for some $\nu\in\P(\C)$. The graphon operator applied on $\mu$ degenerates to $\nu$:
\begin{align*}
	\sW \mu(u) = \int_{[0,1]\times \C} \delta_x \mu(dv, dx) = \int_{\C} \delta_x \nu(dx)=\nu,
 \qquad \forall u\in[0,1]
.\end{align*}
For any closed-loop policy $\pi\in\A_U$ that depends only on the state variable, i.e., $\pi$ is a function $\T\times\R^d\to\P(A)$, $(U, X)$ follows the transition dynamic: $U\sim \mathrm{unif}[0,1]$, $X_0\sim \lambda^\circ$ and
\begin{align*}
    &a_t\sim \pi_t, \qquad X_{t+1} \sim P_t(X_t, \nu_t,  a_t)
.\end{align*}
Note that $U$ and $X$ are now independent. The objective of the representative player becomes 
\begin{align*}
	J_W(\nu, \pi) &= \E\bigg[ \sum_{t\in\T} f_t(X^{\pi}_t, \nu_t, a_t) + g(X_T^{\pi}, \nu_T) \bigg]
,\end{align*}
where the expectation is w.r.t $X$ only, thanks to the independence between $U$ and $X$. In this way our label-state graphon game formulation degenerates to a classic mean field game problem. The equilibrium measure and controls indeed does not depend on the label, and thus it is safe to restrict to $\mu$ being a product measure and policies not depending on label
at the beginning.

\subsection{Time-Variant Interaction Intensity}
\label{section:time-variant.iteraction.intensity}
It is possible to consider time-variant interaction intensities in our framework when the time horizon is finite. In the definition of finite player games (\cref{section:finite.player.game}), we may replace $\xi$ with a sequence of matrix $\{\xi^t\}_{t=0}^T$, where $\xi^t$ is the interaction intensity of the $n$ players at time $t$. The empirical weighted neighborhood measure of player $i$ then becomes $M^i_t=\frac{1}{n}\sum_{i=1}^n \xi^t_{ij}\delta_{X^j_t}$, and it can be equivalently written as $W_{\xi^t}S_t(u_i)$, in the notation of \cref{section:finite.player.game}.

In the graphon game setting (\cref{section:problem.setup}), we may work on a sequence of graphon $\{W_t\}_{t=0}^T$, where $W_t$ is the interaction among a continuum-type of players at time $t$. Note that the sequence $\{W_t\}_{t=0}^T$ should be non-random. By replacing every $\sW\mu_t$ with $\sW_t\mu_t$, it is ready to check that the existence (\cref{section:existence}) and uniqueness (\cref{section:uniqueness}) results still hold. As for the approximate equilibrium result (\cref{section:approx.eqbm}), we may change \cref{assump:general.kernal.approx.eqbm}(\ref{assump:approx.eqbm.denseness}) into the following: $W_{\xi^{n,t}}\rightarrow W_t$, and 
\begin{align*}
    \lim_{n\to\infty} \sup_{t\in\T}\frac{1}{n^3}\sum_{i,j=1}^n (\xi^{n,t}_{ij})^2 = 0.
.\end{align*}
Then the approximate equilibrium result still holds. We present the main paper in terms of a time-invariant graphon $W$ to avoid distraction from the main point we want to address.

\subsection{Graph Sequence and the Convergence \cref{assump:general.kernal.approx.eqbm}} \label{sec:graph.sequence.convergence}
Conceptually, a graph is dense if nearly every pair of vertices are connected by an edge. However, rigorously, the denseness of graph is ill-defined, and different results require different denseness conditions. 

We first demonstrate that assumption \eqref{assumpeq:interaction.matrix.second.moment} is indeed very mild. We may write $\tr((\xi^n)^2)=\sum_{i,j=1}(\xi^n_{ij})^2$ where $\tr(\cdot)$ is the trace, and this is referred as second moment of square matrix. Here are several examples on commonly-encountered interaction matrix on networks.

\textbf{Complete graph.} Let $\xi^n_{ij}=1$ for each $i\ne j$, and thus $\xi^n$ is the adjacency matrix of a complete graph, and this recovers the mean field case where the players interact symmetrically. We have $W_{\xi^n}\equiv 1$ for all $n$, and thus $W_{\xi^n}\to W$ for $W\equiv 1$. 
We have 
\begin{align*}
    \frac{1}{n^3}\sum_{i,j=1}^n (\xi^n_{ij})^2 \le \frac{1}{n} \to 0
.\end{align*}

\textbf{Threshold graph.} 
Consider a threshold graph on $n$ vertices where vertex $i$ and $j$ are connected by an edge if $i+j<n$, and let $\xi^n_{ij}=1_{i+j<n}$. It is easy to see that $W_{\xi^n}$ converges in cut norm to a limit defined by $W(u,v):=1_{u+v<1}$. It is ready to check that \eqref{assumpeq:interaction.matrix.second.moment} is satisfied.

\textbf{Random walk on graph.} Consider a graph on $n$ vertices where vertex $i$ has degree $d^n_i$. Let $\xi^n_{ij}= \frac{n}{d^n_i}1_{i\sim j}$, where $1_{i\sim j}$ is 1 if $i$ and $j$ are connected by an edge and 0 otherwise. Then $\xi/n$ is a Markovian transition matrix of the random walk on the graph. We have 
\begin{align*}
    \frac{1}{n^3}\sum_{i,j=1}^n (\xi^n_{ij})^2 = \frac{1}{n}\sum_{i,j=1}^n \frac{1}{(d^n_i)^2}1_{i\sim j} = \frac{1}{n}\sum_{i=1}^n \frac{1}{d^n_i}
,\end{align*}
and the assumption holds if $\sum_{i=1}^n \frac{1}{d^n_i}\to 0$. Intuitively this means the average of degrees diverges. In particular, if $d^n_1=\dots=d^n_n=d^n$, $\xi^n$ becomes an interaction matrix on a $d^n$-regular graph, and we just need $\frac{1}{d_n}\to 0$, i.e., the degree $d^n$ diverges to satisfy \eqref{assumpeq:interaction.matrix.second.moment}. However, not even every sequence of regular graphs has a graphon limit, and we will discuss this below. 

\textbf{\Erdos\ graph.} Consider an \Erdos\ graph $G^n(p_n)$ \cite{ErdosRenyi1959} on $n$ vertices, where every edge is connected with Bernoulli$(p_n)$. Let $\xi^n_{ij}=\frac{1}{p_n} 1_{i\sim j}$, it is not hard to show that \eqref{assumpeq:interaction.matrix.second.moment} holds in probability as long as $np_n\to\infty$. 
We understand $np_n$ as the expected degree of any vertex, and this is an important quantity of \Erdos\, graphs that also implies connectivity \cite{ErdosRenyi1960}. Moreover, when $p_n\to p$ for some $p\in(0,1)$, $W_{\xi^n}\rightarrow W$ for $W\equiv 1$ in probability.

All examples mentioned above merely requires a diverging-average-degree type condition to be considered dense enough for our results to hold. These denseness conditions are attracting more and more awareness in the stochastic community and particularly in the studies of stochastic differential equation dynamics and heterogeneous propagation of chaos on networks \cite{Delattre,jabin2022meanfield,bris2023note}. 

The assumption $W_{\xi^n}\to W$ is also a denseness condition as the existence of a graphon limit implicitly implies that the converging graph sequence is generally dense. Actually in the sparse setting, vertices in a local neighborhood interact strongly with each other and do not become negligible as the number of vertices goes to infinity \cite{Lacker23SparseGraph}. The propagation of chaos results also fail in this regime. Nevertheless, not every dense graph sequence admits a graphon limit, since the sequence is also required to preserve similar network structures. This can be formalized by graph homomorphism \citep[Chapter 5]{Lovasz2010}.

It is worth noticing that a sequence of sparse graphs may converge if they are sampled from the limiting graphon. There are studies \cite{fabian2023learning} adopting this setting. However conceptually this means the finite player games are constructed from the graphon game, which is different from the view we take, that graphon games are motivated by finite player games.

Finally, we demonstrate that the convergence of graphon in strong operator topology is weaker than converging in other norm. 
Recall the definition of integral operator in \eqref{eq:integral.operator}:
\begin{align*}
    \sW\phi(u) := \int_{[0,1]} W(u,v)\phi(v)dv, \quad \forall\phi\in L_{\infty}[0,1]
,\end{align*}
which maps $L_{\infty}[0,1]$ to $L_{1}[0,1]$. The integral operator norm is given by $\Vert \sW\Vert_{\infty\to 1} := \sup_{\Vert\phi\Vert_{\infty}\le 1} \Vert \sW\phi \Vert_{1}$,
where $\Vert \cdot \Vert_p$ is the $L_p$ norm. It is known to be equivalent to the cut norm \citep[Lemma 8.11]{Lovasz2010} by 
\begin{align} \label{eq:cutnorm}
    \Vert W\Vert_{\msquare} \le \Vert \sW\Vert_{\infty\to 1}\le  4\Vert W\Vert_{\msquare}
\end{align}
where the cut norm of a graphon is defined by 
\begin{align*}
    \Vert W\Vert_{\msquare} := \sup_{S, T\subset [0,1]} \left| \int_{S\times T} W(u,v) dudv \right|
,\end{align*}
for measurable subsets $S, T$. It is immediate from \eqref{eq:cutnorm} that the convergence in strong operator topology is weaker than converging in the cut norm. Indeed, $W^n$ converging to $W$ in $L_1$ also implies $W^n\to W$, see \cref{lemma:L1convergence.implies.strong.op.topology}.

\subsection{Fixed Point Theorems and Strong Assumptions for Sample Complexity Analysis} \label{section:fixed.point.theorems}
There are two main stream fixed point theorems. The first type is based on contraction mapping, that if an operator is contraction in norm, then it admits a fixed point. An example of this category is the well-known Banach fixed point theorem. The second type, on the other hand, is usually based on the compact properties of the range space and operator, this includes Brouwer's fixed point theorem (compact, convex range space and continuous operators), Schauder's fixed point theorem (closed, bounded, convex range space and compact operators), and Kakutani-Fan-Glicksberg fixed point theorem for set-value functions, which is the one we will use in the proof of equilibrium existence (\cref{section:proof.of.existence}). Compact-based fixed point theorems require weaker assumptions, but they fail to indicate how to find a fixed point rather than telling its theoretical existence. 

Contraction based fixed point theorems usually need stronger assumptions, and the contraction in norm property is hard to verify practically. However, it provides clear approaches to find the fixed point when one exists: starting from an appropriate initial point, we may iteratively apply the operator and the result is guaranteed to converge to a fixed point. This iteration comes naturally from the forward-backward structure of games: players give their best response, and the population changes according to everyone's best response.

Many algorithms on mean field type games are based on the contraction fixed point theorem and iteration \cite{guo2019learning,cui2022learning}, including our work. These algorithms usually try to approximate the contraction mapping with estimations (since the environment is usually unknown) in order to demonstrate the convergences of algorithm. Thus, the contraction mapping and Lipschitzness assumptions are unfortunately necessary in the complexity analysis (see \cref{assump:general.algo}(\ref{assump:contractive.FPI})), even though we do not make such assumptions in the pure mathematical analysis in \cref{section:analysis}.

There is a trade-off between the nice properties of the algorithm and a relatively weaker assumption. In our work, we focus on an algorithm that is online and oracle-free with convergence guarantees, which is proved based on existing complexity results of stochastic approximation methods that require stronger assumptions (see for example \cref{lem:control} and \cref{lem:pop}). On the other end of the trade-off, we may consider algorithms that require weaker assumptions, but do not enjoy theoretical convergence guarantees. Designing sharper quantitative convergence result for stochastic approximation methods is beyond the scope of this work.

\section{Comparison of Related Work}
\label{appendix:compare.with.related.works}
In this section we give a comparison with related prior studies on discrete time and representative-player graphon games. The comparison is facilitated from four aspects: the problem formulation (\cref{table:formulation}), the analysis results showed (\cref{table:analysis.result}), the assumptions needed for existence and approximate equilibrium (\cref{table:analysis.assumption}), and the properties of the proposed learning algorithms (\cref{table:algorithm}).

\begin{table}[htbp!] 
\caption{Comparison of formulation}
\centering
\resizebox{0.8\textwidth}{!}{
\begin{tabular}{l|ccccccccc} \hline \label{table:formulation}

 \textbf{Reference} & Player-type & Time domain & State space & Action space & Entropic regularization\\
\hline\hline
\cite{cui2022learning} & Continuum & Discrete & Finite & Finite & \xmark \\\hline\\[-1em]
\cite{fabian2023learning}%
\tablefootnote{We note that \cite{fabian2023learning} considers a sparse type of graphon convergence with a factor that mitigates the sparseness, and adopts a sampling regime where the finite player interactions are sampled from a graphon. This is different from the other work where the graphon games are considered to be the limit model of finite player network games.}
& Continuum & Discrete & Finite & Finite & \xmark \\\hline\\[-1em]
\cite{zhang2023learning}& Continuum & Discrete & Compact & Finite & \checkmark \\\hline\\[-1em]
\cite{Lacker2022ALF} & Representative & Continuous & $\R^d$ & Compact & \xmark \\\hline\\[-1em]
This paper & Representative & Discrete & $\R^d$ & Compact & \xmark \\\hline
\end{tabular}
} 
\end{table}

\begin{table*}[htbp!]
\caption{Comparison of analysis results}
\centering
\resizebox{0.55\textwidth}{!}{
\begin{tabular}{l|ccccccccc}\hline \label{table:analysis.result}
 \textbf{Reference} & Existence & Uniqueness & Approximate Equilibrium\\
\hline\hline
\cite{cui2022learning} & \checkmark & \xmark & \checkmark   \\\hline\\[-1em]
\cite{fabian2023learning} & \checkmark & \xmark & \checkmark   \\\hline\\[-1em]
\cite{zhang2023learning} & \checkmark & \checkmark & \xmark \\\hline\\[-1em]
\cite{Lacker2022ALF} & \checkmark & \checkmark & \checkmark \\\hline\\[-1em]
This paper & \checkmark & \checkmark & \checkmark \\\hline
\end{tabular}
} 
\end{table*}

\begin{table*}[htbp!]
\caption{Comparison of analysis assumptions}
\centering
\resizebox{\textwidth}{!}{
\begin{tabular}{l|ccccccccc}\hline\label{table:analysis.assumption}
\textbf{Reference} &  & \textbf{Existence assumptions} & &\textbf{Approx. eqbm. assumptions } & \\
 \textbf{Perspective} & Graphon & Transition kernel & Reward function &  Transition kernel & Graphon convergence \\
\hline\hline
\cite{cui2022learning} & Jointly Lipschitz & Jointly Lipschitz & Jointly Lipschitz & Jointly Lipschitz & In cut norm \\\hline\\[-1em]
\cite{fabian2023learning} & Jointly Lipschitz & Jointly Lipschitz & Jointly Lipschitz & Jointly Lipschitz & In cut norm \\\hline\\[-1em]
\cite{zhang2023learning} & Jointly cont. & Jointly cont. & Jointly cont. & NA & NA  \\\hline\\[-1em]
This paper & $L_1$-integrable & Cont. in action & Jointly cont. & Jointly cont. & In strong operator norm \\\hline
\end{tabular}
}
\end{table*}

\begin{table}[H]
\centering
\caption{Comparison of algorithm analysis}\label{tab:comp}
\resizebox{0.7\textwidth}{!}{
\begin{tabular}{l|cccccccccc}\hline\label{table:algorithm}
	\textbf{Reference}  & Oracle-free & MFG criterion & Graphon &  \begin{tabular}{@{}c@{}} Complexity \\ analysis \end{tabular} & \\\toprule
	\cite{cui2022learning}& \checkmark & Contractive & Block-wise jointly Lipschitz & \xmark   \\\hline
\cite{fabian2023learning}& \xmark & Monotone  & Block-wise jointly Lipschitz & \xmark   \\\hline
\cite{zhang2023learning} & \xmark & Monotone & Jointly Lipschitz & \xcheckmark%
\tablefootnote{
The sample complexity in \cite{zhang2023learning} is partial, as it only accounts for the backward procedure, i.e., the best response of players to a fixed population distribution, without discussing the complexity of obtaining an estimate of the induced population measure in the forward procedure.}
\\\hline
This paper & \checkmark & Contractive & \cref{assump:general.algo}(\ref{assump:graphon.continuity}) & \checkmark \\\hline
\end{tabular}
}
\end{table}

As a comparison with the \citeauthor{Lacker2022ALF}'s pioneering work in representative graphon game \yrcite{Lacker2022ALF}, the discrete time game model proposed in this paper is more realistic and strongly associated with the applications and lends itself well to stochastic algorithm design as existing algorithms and analytical results for stochastic approximation methods are mostly given on a discrete time domain.

In addition, the discrete time formulation, defined with transition kernels directly, covers a broader range of state dynamics. Consider a partition of the continuous-time domain $[0,T]$ into $N+1$ time slices with a time step $\Delta:=T/N$, and $t_k:=k\Delta$, then the SDE can be discretized on the time slices $\{t_k\}_{k=0}^{N}$, resulting in a discrete time Markov process. In other words, every state Ito's process of the form $dX_s=b(s, X_s,\alpha_s)ds+\sigma(s,X_s)dB_s$
can be discretized into the form $X_{k+1}\sim P_k(X_k,\textsf{W}\mu_k(U),\alpha_k) $ for some Markovian transition kernel $P_k$. On the other hand, not every discrete time Markov process (with a general transition kernel $P_k$) has a continuous time analog in the form of an Ito's process. It is actually a trade-off: we sacrifice information on time domain by evaluating the state process only on discrete time slices rather than a complete path on $[0,T]$, but our framework may cover a broader range of possible state dynamics.

\section{Comparing Representative-Player Games and Continuum-player Games}\label{appendix:comparison}

\subsection{Continuum-Player Graphon Games}
\label{section:continuum.player.formulation}
In this section we give a review on continuum-player graphon games in previous work. Consider a game with a continuum of players, labeled with $u\in[0,1]$, and we assume the label space $[0,1]$ is equipped with Borel-$\sigma$-algebra and Lebesgue measure. Each player $u$ admits a state process $X^u$ valued in $\R^d$. Fix a population measure, which is a collection $\mu=\{\mu^u\}_{u\in[0,1]}$, and it is usually assumed that $u\mapsto \mu^u$ is a probabilistic kernel, i.e., $u\mapsto \mu^u(B)$ is a measurable function for any Borel subset $B\subset\C$.
Let $\A$ be the collection of all the feedback (closed-loop) policies $\T\times\R^d\to \P(A)$, and assume player $u$ adopts a policy $\pi^u\in\A$. The state process follows
\begin{align*}
    &X^u_0\sim \lambda^u, \\
    &a^u_t\sim \pi^u_t(X^u_t), \qquad X^u_{t+1} \sim P_t(X^u_t, \sW\mu_t(u),  a_t)
,\end{align*}
for some initial condition $\lambda^u\in\P(\R^d)$. Here we regard $\mu$ as a measure constructed by $du\times \mu^u(dx)$, where the assumption $\mu$ being a kernel come into place. It is also common to write $\sW\mu_t(u)$ as $\int_{[0,1]} W(u,v)\mu^v_t dv$.

Note that all the players' state dynamics are independent, in the following sense: for every $u\in[0,1]$, $X^u$ is independent of $X^v$ for every $v\in[0,1]$. Indeed, this independence leads to a significant measurability issue that many proofs ignore, and we will give a detailed discussion in \cref{section:comparison.two.graphon.games}. 

Each player $u$ aims to maximize an objective function
\begin{align*}
    J^u(\mu,\pi^u) := \E\bigg[ &\sum_{t\in\T} f_t(X^{u,\pi^u}_t, \sW \mu_t(u), a^u_t) + g(X_T^{u,\pi^u}, \sW \mu_T(u)) \bigg]
,\end{align*}
where we denote $X^{u,\pi^u}$ to emphasize the process $X^u$ is controlled by policy $\pi^u$. The equilibrium is defined as a pair $(\widehat{\boldsymbol{\mu}}, \widehat{\boldsymbol{\pi}}):=(\{\widehat{\mu}^u\}_{u\in[0,1]}, \{\widehat{\pi}^{u}\}_{u\in[0,1]})\in \P(\C)^{[0,1]}\times \A^{[0,1]}$ such that
\begin{align*}
    J^u(\widehat{\bm{\mu}},\widehat{\pi}^{u})&=\sup_{\pi\in\A} J^u(\widehat{\boldsymbol{\mu}},\pi),  \\
    \widehat{\mu}^u&=\L(X^{u,\widehat{\pi}^{u}})
,\end{align*}
for almost every $u\in[0,1]$. This is called ``continuum-player formulation" since it involves a continuum of players.


\subsection{Technical: The Non-Measurability Issue}
\label{section:comparison.two.graphon.games}
Mathematically, the continuum-player formulation suffers from significant measurability difficulties. For completeness, we first cite \citep[Proposition 2.1]{Sun2006FubiniExtension} as follows:
\begin{proposition}
    Consider index space $(I, \I, \lambda)$ and probability space $(\Omega,\F, P)$. Consider function $f:I\times \Omega\to E$ for some Polish space $E$. Suppose $f$ is measurable on the product space $(I\times \Omega, \I\otimes\F, \lambda\otimes P)$, equipped with the usual product $\sigma$-algebra, and for $\lambda$-almost every $j\in I$, $f_j$ is independent of $f_i$ for $\lambda$-almost every $i\in I$. Then, for $\lambda$-almost every $i\in I$, $f_i$ is a constant random variable.
\end{proposition}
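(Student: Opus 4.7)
The plan is a two-step maneuver: first reduce to bounded real-valued slices via a countable separating family, then kill all randomness using a Fubini second-moment computation that exploits the fact that $\lambda \otimes \lambda$ assigns zero measure to the diagonal of $I \times I$.

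\textbf{Reduction.} Since $E$ is Polish, choose a countable family $\{\psi_k\}_{k\ge 1}$ of Borel functions $\psi_k : E \to [0,1]$ that separates points (e.g.\ a countable uniformly dense subset of $C_b(E,[0,1])$). For each $k$, $F_k(i,\omega) := \psi_k(f(i,\omega))$ is jointly measurable and bounded, and the slices $F_k(i,\cdot)$ inherit the pairwise-independence hypothesis. If we can show that for every $k$ the slice $F_k(i,\cdot)$ is $P$-a.s.\ constant for $\lambda$-a.e.\ $i$, then intersecting the countably many $\lambda$-null exceptional sets and using that $\{\psi_k\}$ separates points yields that $f(i,\cdot)$ is itself $P$-a.s.\ constant for $\lambda$-a.e.\ $i$.

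\textbf{Second-moment vanishing.} Fix $k$, write $F := F_k$, let $m(i) := E[F(i,\cdot)]$ (measurable in $i$ by Fubini), and set $g(i,\omega) := F(i,\omega) - m(i)$. Observe that the hypothesis, unfolded through Fubini, implies the set $\{(i,j) : f_i,f_j \text{ not independent}\}$ has $\lambda \otimes \lambda$-measure zero. For any Borel $B \subset [0,1]$, boundedness and Tonelli give
\begin{align*}
E\!\left[\Bigl(\int_B g(i,\cdot)\,d\lambda(i)\Bigr)^{\!2}\right]
= \iint_{B\times B} E[\,g(i,\cdot)g(j,\cdot)\,]\,d\lambda(i)\,d\lambda(j).
\end{align*}
Off the diagonal the integrand vanishes by the centering and independence above; the diagonal is $\lambda \otimes \lambda$-null because $\lambda$ is atomless on $[0,1]$. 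Hence $\int_B g(\cdot,\omega)\,d\lambda = 0$ for $P$-a.e.\ $\omega$.

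\textbf{Monotone class and Fubini swap.} Let $\{B_n\}$ be a countable $\pi$-system generating the Borel $\sigma$-algebra of $[0,1]$ (e.g.\ intervals with rational endpoints). A single $P$-null set $N$ suffices so that for every $\omega \notin N$ and every $n$, $\int_{B_n} g(\cdot,\omega)\,d\lambda = 0$. A standard monotone-class extension then gives $\int_B g(\cdot,\omega)\,d\lambda = 0$ for all Borel $B$, whence $g(\cdot,\omega) = 0$ $\lambda$-a.e.\ for each such $\omega$. Applying Fubini to the measurable set $\{(i,\omega) : g(i,\omega) \neq 0\}$ swaps the quantifiers: for $\lambda$-a.e.\ $i$, $F(i,\cdot) = m(i)$ $P$-a.s., i.e., $F_k(i,\cdot)$ is almost surely constant. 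Combining over $k$ by the reduction in the first paragraph completes the proof.

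\textbf{Main obstacle.} The only genuinely delicate point is the atomlessness of $\lambda$: it is what makes the diagonal negligible and therefore what powers the vanishing second moment. Without it, the diagonal would contribute $\int_B V(i)\,d\lambda(i)$ with $V(i) := \mathrm{Var}(F(i,\cdot))$, and the argument would collapse — which is precisely the tension that Sun's Fubini-extension framework is designed to resolve and that motivates the representative-player reformulation in the main paper.
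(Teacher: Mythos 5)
The paper offers no proof of this proposition: it is quoted verbatim from Sun's work on Fubini extensions and invoked as a black box in the discussion of the non-measurability issue, so there is no in-paper argument to compare yours against. What you have written is a correct, self-contained proof, and it is essentially the standard second-moment argument behind Sun's result: reduce to bounded real-valued slices via a countable point-separating family, center, observe that the covariance kernel $c(i,j)=\E[g(i,\cdot)\,g(j,\cdot)]$ vanishes $\lambda\otimes\lambda$-almost everywhere off the diagonal, conclude that $\int_B g(\cdot,\omega)\,d\lambda$ has zero variance for every $B$ in a countable generating $\pi$-system, and swap the quantifiers with Tonelli. Four small repairs are needed to make it match the generality of the statement. (i) The proposition is stated for an arbitrary index space and does not assume $\lambda$ atomless, while your diagonal-is-null step does. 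This costs nothing: an atom $i_0$ with $\lambda(\{i_0\})>0$ survives both ``almost every'' quantifiers in the hypothesis, so $f_{i_0}$ is independent of itself and hence a.s.\ constant, and your computation with $B$ confined to the atomless part of $I$ handles the remainder --- but say this rather than silently specializing to Lebesgue measure on $[0,1]$. (ii) Likewise, a countable $\pi$-system generating $\I$ need not exist for a general index $\sigma$-algebra; the fix is to replace $\I$ by a countably generated sub-$\sigma$-algebra with respect to which the single jointly measurable function $g$ is still product-measurable, and run the $\pi$-system argument there. (iii) $C_b(E,[0,1])$ is not uniformly separable for a general Polish $E$ (take $E=\R$), so your parenthetical example of a separating family does not exist; use instead, e.g., $x\mapsto\max(0,\,1-n\,d(x,x_m))$ over a countable dense set $\{x_m\}\subset E$ and $n\in\N$. (iv) You do not need measurability of the set $\{(i,j): f_i\not\perp f_j\}$, which is not immediate from the definition of independence; it suffices that the manifestly measurable function $c$ satisfies $c(i,j)=0$ for a.e.\ $j$ and a.e.\ $i$, which the hypothesis gives directly, and Tonelli applied to $\{c\ne 0\}$ then yields the null product set you actually use. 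None of these affects the substance: the proof is sound, and your closing remark correctly identifies atomlessness of the index measure as the mechanism that forces degeneracy and motivates both the Fubini-extension workaround and the representative-player formulation adopted in the paper.
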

Intuitively, the product $\sigma$-algebra $\I\otimes\mathcal{F}$ fails to support a large amount of information when we require both the joint measurability of $f$, and the independence between $f_i$ and $f_j$. This would lead to a problem when we consider a continuum of players, even if the state space is a finite space rather than $\R^d$, and even for a static game. 

More precisely, let $(\Omega,\F,\mathbb{F}, \PP)$ be a probability space that supports a collection of stochastic processes $\{X^u: u\in[0,1]\}$, where $X^u$ is a process on $\{0,1,\dots, T\}$ valued in $\X$ (which could be $\R^d$ or a finite state space). $X^u$ represents the state process of player with label $u$. From time $t-1$ to $t$, $\{X^u_{t}\}$ are generated independently for every $u\in[0,1]$, and thus the mapping $(u,\omega)\to X^u(\omega)$ is not measurable on the typical product $\sigma$-algebra; similarly, $(u,\omega)\mapsto a^u_t(X^u_t(\omega))$ is not measurable. This measurability issue leads to significant difficulties in the proof, as the objective reward function may involve these mappings. For instance, as one attempts to transform the continuum-player graphon game into a mean field game with augmented state space, the objective becomes
\begin{align*}
    \E \int_{[0,1]}\left[\sum_{t\in\T} \overline{f}_t\left(\binom{u}{X^{u,\pi^u}_t}, \mu_t, a_t\right) + \overline{g}\left(\binom{u}{X^{u,\pi^u}_T}, \mu_T\right) \right] du
,\end{align*}
where the integral with respect to $(u,\omega)$ over $[0,1]\times\Omega$ is not well-defined since the integrand is not measurable.
A similar argument demonstrates why we cannot aggregate the objective of all the players in a continuum-player graphon game, where the integral $\int_{[0,1]} J^u(\bm{\mu},\pi^u) du$ is not well-defined. Thus, the continuum-player graphon game is not mathematically equivalent to our representative-player formulation.

This technical issue can be addressed by carefully enlarging the $\sigma$-algebra with rich Fubini extensions \citep[Section 2]{Sun2006FubiniExtension}, allowing it to hold more information while ensuring the joint measurability and independence \cite{Aurell23, TangpiZhou2023optimal}. However, this approach is restricted to linear-quadratic problems.

On the contrary, our graphon game formulation considers only one representative player. Recall that for any $\mu\in\P_{\mathrm{unif}}([0,1]\times\C)$, the conditional law of $X$ given $U$ yielded by disintegration is a uniquely defined Lebesgue almost surely. Thus, it encodes less information by only considering almost every label $u$, but this provides great technical convenience and allows us to consider the game for one representative player \cite{Lacker2022ALF}. 

\subsection{Philosophical: Clarification on the Representative Player}
As demonstrated above, the representative-player graphon game we present in \cref{section:problem.setup} and the continuum-player graphon game in \cref{section:continuum.player.formulation} are not mathematically equivalent.

Conceptually, our representative-player formulation inherits the spirit of mean field games. We recall that there is only one representative player in the mean field game, and all other players are abstracted into a population measure in $\P(\C)$. Similarly, our game formulation is for one representative player, and the difference is that now the representative player is in addition assigned a random label, while all other players are abstracted into a label-state joint population measure on $\P_{\mathrm{unif}}([0,1]\times\C)$.

In other words, our graphon game formulation is defined directly for a representative player as in classic mean field games. This should be distinguished from reformulating a continuum-player graphon game into an MFG with augmented state space (in a similar way of \cref{section:game.with.aug.space}), which is a proof technique to adapt existing MFG results into a graphon game setting, and it is possible to avoid this technique as in our proofs. Using this technique does not change the fact that the problems remain for a continuum of players: the theorems are stated for the proposed continuum-player graphon game, and the measurability issue is not avoided. Thus, we do not regard this concept as ``representative-player". 

As a comparison, using one representative to define a graphon game directly at the beginning, as in \cite{Lacker2022ALF} and this paper, brings novelty compared to prior work.
This is similar to the concept that classic MFG literature defines a game for a representative player from the start, rather than modelling a game for a continuum of players and using techniques to reformulate it in the proofs.

\section{A Toy Example on the Difference between Two Formulations}
\label{section:toy.example}
In this section we compare two types of graphon game formulations on a toy example, inspired by the motivating example in \cite{Cui2021ApproximatelySM}.
The two types of formulations of graphon games lead to the same equilibrium in this particular one-shot game, while the representative-player graphon game is simpler in formulation. 
When a finite player game contains larger and continuous state and action spaces with more complex settings, our formulation would demonstrate more advantages in both analysis and computation. Note that this toy example focuses on demonstrating the difference in formulation, and the measurability issue mentioned in \cref{section:comparison.two.graphon.games} is not the main point here as the example is simple enough to be solved explicitly, and no technical proofs are involved.


The interaction is defined by a threshold graph, where $\xi^n_{ij}=1_{i+j<n}$. It is easy to see that $W_{\xi^n}$ converges in cut norm to a limit defined by $W(u,v):=1_{u+v<1}$. 
Note that this graphon is discontinuous.

\subsection{$n$-player Game}
Consider a one-shot (single-stage) game for $n$ players, and let the state and action space be $\X=A=\{-1, 1\}$, understood as left and right. Each player simultaneously chooses either left or right, and is punished by the weighted average of proportion of players that chose the same action. Precisely,
\begin{align*}
a^i = \left\{\begin{matrix}
		1 &\text{w.p.} & p^i;\\
		-1 &\text{w.p.} & 1-p^i,\\
	\end{matrix}\right.
\qquad
X^i = a^i
,\end{align*}
where $p^i$ is the probability player $i$ choose right (state 1), and this characterizes the policy. Let $\bm{p}=(p^1,\dots, p^n)$ and let the terminal reward be $g(x, m) = - \langle m, 1_x \rangle$, where $1_x$ is the indicator function. Player $i$ aims to maximize
\begin{align*}
J^i(\bm{p}) &= -\E \bigg( \sum_{j=1}^n \xi^n_{ij} 1_{X^i=X^j} \bigg) \\
            &= -\sum_{j=1}^{n-i} \Big( p^ip^j + (1-p^i)(1-p^j) \Big)
.\end{align*}
It can be verified that one equilibrium is given by $p^1=\dots=p^n=\frac{1}{2}$.

\subsection{Representative-Player Formulation} \label{sec:example.label.state.formulation}
Consider a one-shot game for a single player, and let the state and action space be $\X=A=\{-1, 1\}$. Any population measure $\mu\in\P_{\mathrm{unif}}([0,1]\times\X)$ can be characterized by a function $q(u):=\mu(u, \{1\})$, $\forall u\in[0,1]$. Let this population measure be fixed. The graphon operator is given by
\begin{align*}
\sW\mu(u) = \int_{[0,1]} W(u,v)(q(v)\delta_1+ (1-q(v))\delta_{-1}) dv \,\in\, \M_+(\{-1, 1\})
,\end{align*}
where $\delta$ is Dirac delta measure.
The player is randomly assigned a label $U\sim \mathrm{unif}[0,1]$, and let $\pi$ be her policy. Equivalently the policy can be characterized by $p(u):=\pi(u)(\{1\})$. Then she follows the dynamic
\begin{align*}
a = \left\{\begin{matrix}
		1 &\text{w.p.}& p(U);\\
		-1 &\text{w.p.}& 1-p(U),\\
	\end{matrix}\right.
\qquad
X = a
.\end{align*}
The objective is
\begin{align*}
J(q, p) &= -\E \bigg( \langle\sW\mu(U), 1_{X} \rangle \bigg) \\
        &= -\E \bigg( \int_{[0,1]} W(U,v)(q(v) 1_{X=1}+ (1-q(v))1_{X=-1}) dv \bigg) \\
            &=-\int_{u+v<1}(q(v) p(u)+ (1-q(v))(1-p(u))) dvdu
.\end{align*}
Solving this as a calculus of a variation problem provides a necessary condition $\int_{0}^{u} q(v) dv=\frac{1}{2},\, \forall u\in[0,1]$, and thus the equilibrium is given by $p(u)=\frac{1}{2}$ for a.e. $u$, and $q(v)=\frac{1}{2}$ for a.e. $v$.

\subsection{Continuum-Player Formulation} \label{sec:example:continuum.formulation}
Consider a static game for a continuum of players with the same setting, and let the population measure be $\bm{q}:=\{q^u\}_{u\in[0,1]}$ for $q^u=\mu^u(\{1\})$. Each player $u\in [0,1]$ admits a policy $\pi^u$ as the probability choosing $1$, so we may write $p^u:=\pi^u(\{1\})$ and denote $\bm{p}:=\{p^u\}_{u\in[0,1]}$. Then the player $u$ chooses the action
\begin{align*}
a^u = \left\{\begin{matrix}
		1 & \text{w.p.} & p^u;\\
		-1 & \text{w.p.} & 1-p^u,\\
	\end{matrix}\right.
\qquad
X^u = a^u
,\end{align*}
and optimize the objective
\begin{align*}
J^u(\bm{q}, p^u) &= -\E \bigg( \langle\sW\mu(u), 1_{X} \rangle \bigg) \\
        &= -\E \bigg( \int_{[0,1]} W(u,v)(q^v 1_{X=1}+ (1-q^v)1_{X=-1}) dv \bigg) \\
            &=- \int_{0}^{1-u} (q^v p^u+ (1-q^v)(1-p^u)) dv
.\end{align*}
It is immediate that the equilibrium is given by $p^u=\frac{1}{2}$, and $q^v=\frac{1}{2}$ for almost every $u,v$. Note that the measurability issue is not a concern for this specific example, since it can be solved directly and thus doesn't involve technical analysis.

\section{Proof for Existence} \label{section:proof.of.existence}
\subsection{Preliminary Lemmas}
\begin{lemma}[{\citep[Lemma A.2]{Lacker2015MRT}}] \label{lm:tightness.on.each.coordinate}
    Let $X_1$ and $X_2$ be Polish spaces. Define the coordinate projections $\Pi_i: \X_1\times \X_2\to \X_i$ for $i=1,2$. Then a set $S\subset\P(\X_1\times \X_2)$ is tight if and only if the sets $S_1=\{\mu\circ \Pi_1^{-1}: \mu\in S\}$ and $S_2=\{\mu\circ \Pi_2^{-1}: \mu\in S\}$ are tight in $\P(X_1)$ and $\P(X_2)$ respectively.
\end{lemma}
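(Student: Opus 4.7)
The plan is to prove both directions using the standard fact that for Polish spaces, continuous images of compact sets are compact, and that finite products of compact sets are compact. The proof is routine but I will lay it out.

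For the forward direction, I would suppose $S \subset \P(X_1 \times X_2)$ is tight and fix $\epsilon > 0$. By tightness there is a compact set $K \subset X_1 \times X_2$ with $\mu(K) > 1 - \epsilon$ for every $\mu \in S$. The coordinate projection $\Pi_i$ is continuous, so $\Pi_i(K) \subset X_i$ is compact. Moreover $K \subset \Pi_i^{-1}(\Pi_i(K))$, so for any $\mu \in S$,
\begin{equation*}
(\mu \circ \Pi_i^{-1})(\Pi_i(K)) = \mu(\Pi_i^{-1}(\Pi_i(K))) \geq \mu(K) > 1 - \epsilon,
\end{equation*}
which shows that $S_i$ is tight.

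For the reverse direction, I would suppose both $S_1$ and $S_2$ are tight and fix $\epsilon > 0$. Then there exist compact sets $K_i \subset X_i$ such that $(\mu \circ \Pi_i^{-1})(K_i) > 1 - \epsilon/2$ uniformly for $\mu \in S$, $i = 1, 2$. Take $K := K_1 \times K_2$, which is compact in $X_1 \times X_2$ (as a finite product of compact sets in Polish spaces, or directly by Tychonoff). Using a union bound on the complement,
\begin{equation*}
\mu(K^c) \leq \mu(\Pi_1^{-1}(K_1^c)) + \mu(\Pi_2^{-1}(K_2^c)) = (\mu\circ\Pi_1^{-1})(K_1^c) + (\mu\circ\Pi_2^{-1})(K_2^c) < \epsilon,
\end{equation*}
so $\mu(K) > 1 - \epsilon$ uniformly in $\mu \in S$, proving tightness of $S$.

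I expect no real obstacles here; this is a textbook measure-theoretic fact. The only subtle point to mention explicitly is that $\Pi_i^{-1}(\Pi_i(K))$ is measurable (which holds since $\Pi_i(K)$ is compact hence Borel, and $\Pi_i$ is continuous hence Borel-measurable), and that $K_1 \times K_2$ is compact in the product topology, which ensures the bounding compact set really does belong to the product Borel $\sigma$-algebra.
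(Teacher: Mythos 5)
Your proof is correct; the paper does not prove this lemma itself but imports it verbatim from the cited reference, and your argument (continuous image of a compact set for the forward direction, product of compact sets plus a union bound on the complement for the converse) is exactly the standard one used there. Nothing to add.
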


\begin{lemma}[{\citep[Corollary A.5]{Lacker2015MRT}}] \label{lemma:joint.continuity}
    Let $E, F, G$ be complete, separable metric spaces. $\phi:E\times F\times G\to \R$ is a bounded measurable function, with $\phi(x,\cdot,\cdot)$ being jointly continuous for any $x\in E$. Then the following mapping is continuous:
    \begin{align*}
        G\times \P(E\times F) \ni (z, P) \mapsto \int_{E\times F} \phi(x,y,z) P(dx, dy)
    .\end{align*}
\end{lemma}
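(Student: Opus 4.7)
The plan is to establish sequential continuity: take $z_n \to z$ in $G$ and $P_n \Rightarrow P$ in $\P(E \times F)$, and show $\int \phi(x,y,z_n)\,dP_n \to \int \phi(x,y,z)\,dP$. I would use the decomposition
\begin{align*}
\int \phi(x,y,z_n)\,dP_n - \int \phi(x,y,z)\,dP = A_n + B_n,
\end{align*}
where $A_n := \int [\phi(x,y,z_n) - \phi(x,y,z)]\,dP_n$ absorbs the perturbation in the $G$-coordinate, and $B_n := \int \phi(x,y,z)\,[dP_n - dP]$ is the weak-convergence term against a $z$-fixed test function.

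For $B_n \to 0$, the test function $(x,y) \mapsto \phi(x,y,z)$ is bounded by hypothesis and continuous in $y$ for each fixed $x$ by the joint continuity of $\phi(x,\cdot,\cdot)$; combined with $P_n \Rightarrow P$, standard weak-convergence/Portmanteau arguments yield $B_n \to 0$.

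For $A_n \to 0$, I would invoke Prokhorov's theorem to extract tightness of $\{P_n\}$: given $\epsilon > 0$, pick a compact $K \subset E \times F$ with $\sup_n P_n(K^c) < \epsilon / (4\|\phi\|_\infty)$. Since $\{z_n\}_{n \ge 1} \cup \{z\}$ is compact in $G$, joint continuity of $\phi(x,\cdot,\cdot)$ on $F \times G$ together with compactness of both $K$ and $\{z_n\} \cup \{z\}$ would give uniform smallness of $|\phi(x,y,z_n) - \phi(x,y,z)|$ on $K$, making the contribution on $K$ at most $\epsilon/2$ eventually. The tail over $K^c$ is bounded by $2\|\phi\|_\infty \cdot P_n(K^c) < \epsilon/2$, so $|A_n| < \epsilon$ for $n$ large.

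The main obstacle is that the hypothesis imposes no continuity of $\phi$ in the $x$-variable, so neither step is fully automatic: the $B_n$ argument needs a Carath\'eodory-type weak-convergence fact rather than the usual bounded-continuous test function version, and the uniform-convergence upgrade in the $A_n$ step requires an equicontinuity-in-$z$ property that is not immediate from $(y,z)$-continuity slice by slice. The cleanest resolution I would pursue is a Skorohod coupling: realize $(X_n, Y_n) \sim P_n$ and $(X, Y) \sim P$ on a common probability space with $(X_n, Y_n) \to (X, Y)$ almost surely, exploit continuity of $\phi(X, \cdot, \cdot)$ at the limiting $(Y, z)$ to deduce $\phi(X_n, Y_n, z_n) \to \phi(X, Y, z)$ in an almost-sure sense on a set of full measure, and then invoke bounded convergence to combine $A_n$ and $B_n$ into a single limit.
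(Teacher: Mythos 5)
You have put your finger on the right obstacle --- the lack of any continuity of $\phi$ in the $E$-variable --- but the resolution you propose does not close it, and in fact no proof can, because the statement as transcribed here is false without an additional hypothesis. Take $E=F=G=[0,1]$ and $\phi(x,y,z)=1_{\QQ}(x)$: this is bounded and measurable, and $\phi(x,\cdot,\cdot)$ is constant, hence jointly continuous, for every $x$. With $P_n=\delta_{q_n}\otimes\delta_0$ for rationals $q_n$ converging to an irrational $r$, we have $P_n\Rightarrow \delta_r\otimes\delta_0=:P$ while $\int\phi\,dP_n=1\not\to 0=\int\phi\,dP$. The cited result, Corollary A.5 of Lacker (2015), carries the extra hypothesis that all measures under consideration share a common fixed $E$-marginal $\lambda$ (in this paper's application the merely-measurable coordinate is the label $u$, whose marginal is always the uniform measure on $[0,1]$), and its proof exploits that marginal: by a Lusin/Scorza--Dragoni argument one restricts to a compact $K\subset E$ with $\lambda(K^c)<\delta$ on which $\phi$ becomes jointly continuous, runs the ordinary portmanteau argument there, and controls the complement uniformly in $n$ because $P_n(K^c\times F)=\lambda(K^c)$ for every $n$. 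Note also that the paper does not prove this lemma at all --- it quotes it from the reference --- so the relevant comparison is with Lacker's argument, not with anything in the present text.

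Concretely, the step that fails in your write-up is the Skorokhod coupling. Almost-sure convergence $(X_n,Y_n)\to(X,Y)$ combined with continuity of $\phi(X,\cdot,\cdot)$ only yields $\phi(X,Y_n,z_n)\to\phi(X,Y,z)$; it gives no information about $\phi(X_n,Y_n,z_n)$, since $\phi(X_n,\cdot,\cdot)$ and $\phi(X,\cdot,\cdot)$ can be entirely unrelated functions even when $X_n$ is close to $X$ (as the indicator example shows). The same defect undermines both halves of your decomposition: $B_n\to 0$ is not a consequence of weak convergence for Carath\'eodory test functions in general, and the ``uniform smallness on $K$'' you want for $A_n$ is an equicontinuity-in-$x$ statement that the hypotheses do not supply. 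If you add the fixed-marginal hypothesis, your decomposition does go through once you insert the Lusin-type restriction described above --- the tail terms are then controlled by $\lambda(K^c)$ uniformly in $n$ --- and that is essentially the proof in the reference.
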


\begin{lemma} \label{lemma:L1convergence.implies.strong.op.topology}
Let $W^n, W$ be graphons. If $W^n\stackrel{L_1}{\longrightarrow}W$, then, $W^n\longrightarrow W$.
\end{lemma}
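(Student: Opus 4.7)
The plan is to bound the strong-operator difference directly by the $L_1$ difference of the graphons, using the triangle inequality on the defining integral together with the boundedness of the test function.

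Fix an arbitrary $\phi\in L_\infty[0,1]$. By the definition \eqref{eq:integral.operator} of the integral operator,
\begin{align*}
\|\sW^n\phi - \sW\phi\|_1 &= \int_0^1 \left| \int_0^1 \bigl(W^n(u,v) - W(u,v)\bigr)\phi(v)\,dv \right| du.
\end{align*}
First I would pull the absolute value inside the inner integral and use $|\phi(v)| \le \|\phi\|_\infty$ to obtain
\begin{align*}
\|\sW^n\phi - \sW\phi\|_1 \le \|\phi\|_\infty \int_0^1\!\int_0^1 |W^n(u,v) - W(u,v)|\,dv\,du = \|\phi\|_\infty\,\|W^n - W\|_1,
\end{align*}
where the iterated integral equals the $L_1$ norm on $[0,1]^2$ by Fubini (the integrand is nonnegative and measurable, so no integrability hypothesis is needed beyond $W^n, W\in L_1$). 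The hypothesis $W^n\stackrel{L_1}{\longrightarrow}W$ then forces the right-hand side to $0$, which gives $\|\sW^n\phi - \sW\phi\|_1 \to 0$ for every $\phi\in L_\infty[0,1]$, i.e., $W^n\to W$ in the strong operator topology.

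There is no real obstacle here: the entire argument is a one-line application of the triangle inequality and Fubini, and the result is essentially the standard fact that for integral operators, convergence of kernels in $L_1$ implies convergence of the operators in the norm $\|\cdot\|_{\infty\to 1}$, which is strictly stronger than strong operator convergence. The only thing worth being a bit careful about is recording that the estimate in fact yields $\|\sW^n - \sW\|_{\infty\to 1} \le \|W^n - W\|_1$ uniformly in $\phi$ on the unit ball of $L_\infty$, which is why no diagonal or subsequence argument is required.
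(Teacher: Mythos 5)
Your proof is correct and is essentially identical to the paper's own argument: both pull the absolute value inside the inner integral, bound the test function by its $L_\infty$ norm, and identify the resulting iterated integral with $\|W^n - W\|_1$ via Tonelli. Nothing further is needed.
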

\begin{proof}
Given any $\psi\in L_{\infty}[0,1]$,
\begin{align*}
    \Vert \sW^n\psi - \sW\psi \Vert_1 &= \int_{[0,1]} \Big| \sW^n\psi(u) - \sW\psi(u) \Big| du \\
    &= \int_{[0,1]} \Big| \int_{[0,1]} W^n(u,v)\psi(v) - W(u,v)\psi(v)dv \Big| du \\
    &\le \Vert\psi\Vert_{\infty }\int_{[0,1]^2} \Big| W^n(u,v)- W(u,v)\Big| dvdu \\
    &= \Vert\psi\Vert_{\infty } \Vert W^n-W \Vert_{1} \longrightarrow 0
.\end{align*}
\end{proof}

\begin{lemma}[{\citep[Lemma 4.2]{Lacker2022ALF}}]
\label{lemma:continuity.of.graphon.operator}]
Let $E$ be any Polish space, and $W$ be any graphon.
    \begin{enumerate}
        \item For a.e. $u\in[0,1]$, the following map is continuous:
        \begin{align*}
            \P_{\mathrm{unif}}([0,1]\times E) \ni \mu\mapsto \sW \mu(u) \in \M_+(E)
        .\end{align*}
        \item Suppose the map $[0,1] \ni u \mapsto \sW (u, v)dv \in \M_+([0,1])$ is continuous, then for any $\mu\in \P_{\mathrm{unif}}([0,1]\times E)$,
        \begin{align*}
            [0,1] \ni u \mapsto \sW \mu(u) \in \M_+(E)
        \end{align*}
        is continuous.
    \end{enumerate}
\end{lemma}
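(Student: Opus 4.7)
The plan is to reduce weak convergence of the measure-valued graphon operator to convergence of real-valued integrals of the form $\int W(u,v)\phi(x)\,\mu(dv,dx)$ against bounded continuous test functions $\phi\in C_b(E)$, so that weak convergence in $\M_+(E)$ is verified coordinate-by-coordinate. In both parts the crux is approximating the kernel $W(u,\cdot)$, or a measurable function built from $\mu$, by something sufficiently regular that the available convergence of the other input can be invoked.

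For part 1, fix $\phi\in C_b(E)$ and let $\mu_n\Rightarrow\mu$ in $\P_{\mathrm{unif}}([0,1]\times E)$. By Fubini applied to $W\in L^1[0,1]^2$, for almost every $u\in[0,1]$ the slice $v\mapsto W(u,v)$ lies in $L^1[0,1]$. Fix such a $u$ and $\varepsilon>0$, and choose a bounded continuous $W^\varepsilon_u\colon[0,1]\to\R_+$ with $\int_{[0,1]}|W(u,v)-W^\varepsilon_u(v)|\,dv<\varepsilon$. Since every $\mu_n$ and $\mu$ has uniform first marginal,
\begin{align*}
\Bigl|\!\int (W(u,v)-W^\varepsilon_u(v))\phi(x)\,d\nu\Bigr|\le \|\phi\|_\infty\!\int_{[0,1]}\!|W(u,v)-W^\varepsilon_u(v)|\,dv<\|\phi\|_\infty\varepsilon
\end{align*}
for $\nu\in\{\mu_n,\mu\}$ uniformly in $n$. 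The remaining integrand $(v,x)\mapsto W^\varepsilon_u(v)\phi(x)$ is bounded continuous on $[0,1]\times E$, so weak convergence gives $\int W^\varepsilon_u(v)\phi(x)\,d\mu_n\to\int W^\varepsilon_u(v)\phi(x)\,d\mu$. Combining and letting $\varepsilon\to 0$ yields $\langle\sW\mu_n(u),\phi\rangle\to\langle\sW\mu(u),\phi\rangle$. Since $\phi\in C_b(E)$ was arbitrary, $\sW\mu_n(u)\Rightarrow\sW\mu(u)$ in $\M_+(E)$.

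For part 2, fix $u_n\to u$ in $[0,1]$ and $\phi\in C_b(E)$, and disintegrate $\mu(dv,dx)=dv\,\mu^v(dx)$ for a measurable kernel $v\mapsto\mu^v$. Then
\begin{align*}
\langle \sW\mu(u_n),\phi\rangle=\int_{[0,1]} W(u_n,v)\,h(v)\,dv,\qquad h(v):=\int_E\phi\,d\mu^v,
\end{align*}
and $h$ is bounded measurable on $[0,1]$. The hypothesis that $u\mapsto W(u,\cdot)\,dv\in\M_+([0,1])$ is continuous gives convergence of integrals against bounded \emph{continuous} test functions, so I would bridge to $h$ by Lusin's theorem: for each $\eta>0$ pick a compact $K_\eta\subset[0,1]$ with Lebesgue measure at least $1-\eta$ on which $h$ is continuous, then extend via Tietze to $h_\eta\in C_b([0,1])$ with $\|h_\eta\|_\infty\le\|h\|_\infty$ and $h_\eta=h$ on $K_\eta$. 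The hypothesis yields $\int W(u_n,v)h_\eta(v)\,dv\to\int W(u,v)h_\eta(v)\,dv$, and the remaining error is bounded by $2\|h\|_\infty$ times the $W(u_n,\cdot)\,dv$-mass of $[0,1]\setminus K_\eta$, which (together with the analogous $u$-term) tends to $0$ with $\eta$ by uniform absolute continuity of the $L^1$ dominated measures $W(u_n,\cdot)\,dv$ (obtainable from the stated continuity in $\M_+([0,1])$).

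The main obstacle is precisely this extension step in part 2: weak convergence of the slice measures $W(u_n,\cdot)\,dv$ is not enough by itself to pass to integrals of a bounded \emph{measurable} $h$, so the argument hinges on combining a Lusin/Tietze approximation with the uniform $L^1$-domination inherent in a graphon. Part 1 is comparatively routine because the uniform first-marginal constraint on $\P_{\mathrm{unif}}$ turns $L^1$-approximation of the kernel slice into a uniform error bound on \emph{all} of the measures $\mu_n$ at once, which is exactly what is needed to commute the $\varepsilon$-limit with the $n$-limit.
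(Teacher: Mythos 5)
The paper does not prove this lemma at all --- it imports it verbatim from \citep[Lemma 4.2]{Lacker2022ALF} --- so your argument has to stand on its own. Part~1 does: it is correct and is the natural argument, and you isolate exactly the right point, namely that the uniform first marginal of every $\nu\in\P_{\mathrm{unif}}([0,1]\times E)$ converts an $L^1$-approximation of the slice $W(u,\cdot)$ by a continuous function into an error bound that is uniform over the entire sequence $\mu_n$, after which a three-epsilon argument closes the claim for a.e.\ $u$ (the exceptional null set being $\{u: W(u,\cdot)\notin L^1\}$, independent of the sequence).

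Part~2 has a genuine gap at the step you yourself flag as the crux. Weak (narrow) continuity of $u\mapsto W(u,\cdot)\,dv$ does \emph{not} give uniform absolute continuity of the family $\{W(u_n,\cdot)\,dv\}_n$ with respect to Lebesgue measure: narrow precompactness of a family of densities is strictly weaker than uniform integrability, which by Dunford--Pettis corresponds to the $\sigma(L^1,L^\infty)$ topology, i.e.\ testing against bounded \emph{measurable} functions rather than continuous ones. Concretely, let $A_n$ be a union of $n^2$ evenly spread intervals of length $n^{-3}$ and $f_n:=n\,1_{A_n}$; then $f_n\,dv\Rightarrow dv$ narrowly while $\int_{A_n}f_n\,dv=1$ and $\mathrm{Leb}(A_n)=1/n\to 0$, and one can interpolate these densities into a narrowly continuous path $u\mapsto W(u,\cdot)\,dv$ with $W(0,\cdot)\equiv 1$. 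For such a $W$ your Lusin--Tietze remainder $\int_{[0,1]\setminus K_\eta}W(u_n,v)\,dv$ cannot be made small uniformly in $n$, and worse, taking $h=1_B$ with $B$ a sparse union of the $A_{n_k}$ shows $u\mapsto\langle\sW\mu(u),\phi\rangle$ is actually discontinuous at $u=0$: under the literal narrow-topology reading of the hypothesis, the step is not just unproved but false, and the conclusion itself can fail. The statement only becomes true if the continuity of $u\mapsto W(u,v)\,dv$ is taken in a topology strong enough to integrate bounded measurable test functions --- setwise or total-variation continuity, which for these absolutely continuous measures is exactly continuity of $u\mapsto W(u,\cdot)$ in $L^1[0,1]$. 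Under that reading part~2 collapses to the one-line estimate $|\langle\sW\mu(u_n)-\sW\mu(u),\phi\rangle|\le\|\phi\|_\infty\,\|W(u_n,\cdot)-W(u,\cdot)\|_{L^1[0,1]}$ applied to $h(v)=\int_E\phi\,d\mu^v$, and the whole Lusin/Tietze detour is unnecessary.
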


\begin{lemma} \label{lemma:weak_convergence.kernel}
    Let $E,F$ be complete, separable metric space, and $F$ is a regular measurable space. Consider a sequence of probability measures on the product space $\{\nu_n\}\subset \P(E\times F)$. Suppose that $\nu_n$ admits disintegration 
    \begin{align*}
        \nu_n(dx, dy) = \mu_{n}(dx) K(x, dy),
    \end{align*}
    for some common kernel $K$, which is continuous as a mapping $E\to\P(F)$, i.e., any sequence $x_n\to x$ implies $K_{x_n}\Rightarrow K_{x}$. Then if $\nu_n\Rightarrow \nu$, $\nu$ admits a disintegration $\nu(dx, dy) = \mu(dx) K(x, dy)$ for some $\mu\in\P(E)$.
\end{lemma}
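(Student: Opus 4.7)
The plan is to verify disintegration by testing against arbitrary bounded continuous functions. First I would define the candidate marginal $\mu \in \P(E)$ as the pushforward $\mu := \nu \circ \Pi_E^{-1}$, where $\Pi_E: E \times F \to E$ is the projection. Since $\Pi_E$ is continuous, the continuous mapping theorem gives $\mu_n = \nu_n \circ \Pi_E^{-1} \Rightarrow \mu$. It then suffices to show that for every bounded continuous $\phi: E \times F \to \R$,
\begin{align*}
\int_{E \times F} \phi(x,y)\,\nu(dx,dy) = \int_E \Big( \int_F \phi(x,y)\,K(x,dy) \Big)\,\mu(dx),
\end{align*}
which by a monotone class argument uniquely determines $\nu$ as $\mu(dx)K(x,dy)$.

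Next, define $\tilde{\phi}(x) := \int_F \phi(x,y)\,K(x,dy)$. The map $\tilde{\phi}$ is bounded by $\|\phi\|_\infty$. For continuity, I would invoke \cref{lemma:joint.continuity} (the cited \citep[Corollary A.5]{Lacker2015MRT}) with the roles reassigned so that the $G$-variable is $E$, the $F$-variable is our $F$, and the $E$-variable is a singleton: this gives that $(x, P) \mapsto \int \phi(x, y)\, P(dy)$ is jointly continuous on $E \times \P(F)$. Composing with the assumed continuous map $x \mapsto K_x$ from $E$ into $\P(F)$ yields continuity of $\tilde{\phi}$.

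Having established that $\tilde{\phi}$ is bounded continuous, the disintegration hypothesis for $\nu_n$ gives the Fubini-type identity
\begin{align*}
\int_{E \times F} \phi\, d\nu_n = \int_E \tilde{\phi}(x)\,\mu_n(dx).
\end{align*}
Taking $n \to \infty$: the left-hand side converges to $\int \phi\, d\nu$ by weak convergence $\nu_n \Rightarrow \nu$, and the right-hand side converges to $\int \tilde{\phi}\, d\mu$ since $\tilde{\phi}$ is bounded continuous and $\mu_n \Rightarrow \mu$. This yields the displayed identity above, proving the disintegration $\nu(dx,dy) = \mu(dx)\,K(x,dy)$.

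The only real obstacle is the continuity of $\tilde{\phi}$; everything else is bookkeeping via weak convergence. Continuity is where the hypothesis that $K$ is a \emph{continuous} kernel (not merely measurable) is essential, and it is cleanly handled by the already-cited joint-continuity lemma rather than a bare-hands Skorokhod or tightness argument. One minor subtlety worth noting: the regularity of $F$ is used only to ensure the disintegration of $\nu$ itself exists uniquely from the marginal $\mu$ once we know the joint-integral identity, so no further work is needed beyond what is sketched.
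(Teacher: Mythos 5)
Your proposal is correct and follows essentially the same route as the paper's proof: define $\mu$ as the pushforward of $\nu$ under the first-coordinate projection, show $x\mapsto\int_F\phi(x,y)K(x,dy)$ is bounded continuous, and pass to the limit in the Fubini identity to identify $K$ as a version of the disintegration kernel. The only cosmetic difference is that you obtain the continuity of $\tilde\phi$ by specializing \cref{lemma:joint.continuity}, whereas the paper proves it directly with an add-and-subtract estimate; both are valid.
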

\begin{proof}
    Let $\Pi_1$ be the projection to first coordinate, which is a continuous mapping. By the continuous mapping theorem, the pushforward of a weak convergence measure sequence under continuous mapping converge weakly:
    \begin{align*}
        \mu_n \coloneqq \nu_n \circ \Pi_1^{-1} \Rightarrow \nu \circ \Pi_1^{-1} =: \mu
    .\end{align*}
    Suppose $\nu$ admits disintegration $\nu(dx, dy) = \mu(dx) \bar{K}(x, dy)$ for some $\bar{K}$. Given any bounded and jointly continuous $\phi:E\times F\to\R$, the mapping $E\ni x\mapsto \int_{F} \phi(x,y)K(x,dy) \in \R$ is bounded and continuous since for any $x_n\to x$,
    \begin{align*}
        &\left| \int_{F} \phi(x_n,y)K(x_n,dy) - \int_{F} \phi(x,y)K(x,dy)\right| \\
        & \le \left| \int_{F} \phi(x_n,y)K(x_n,dy) - \int_{F} \phi(x,y)K(x_n,dy)\right| + \left| \int_{F} \phi(x,y)K(x_n,dy) - \int_{F} \phi(x,y)K(x,dy)\right|
    ,\end{align*}
    which converges to 0. Finally, $\langle \nu_n,\phi\rangle \to \langle \nu, \phi\rangle $, and on the other hand,
    \begin{align*}
        \langle \nu_n,\phi\rangle  = \int_{E\times F} \phi(x,y)K(x,dy)\mu_n(dx) \longrightarrow \int_{E\times F} \phi(x,y)K(x,dy)\mu(dx)
    ,\end{align*}
    which holds for any bounded continuous $\phi$. We conclude that $K$ is a version of $\bar{K}$.
\end{proof}
\subsection{Existence of Equilibrium} \label{section:compatification.and.fixedpoint}
Given any function $\phi:E\times A\to F$ for Polish space $E,F$ and a measure $\pi\in\P(A)$, we may also abuse the notation by writing $\phi$ as a function $E\times\P(A)\to F$, defined by $\phi(x,\pi)=\langle\pi, \phi(x, \cdot) \rangle$ for each $x\in E$.

Throughout the proof we fix a graphon $W$, and denote $\V=\P(A)^T$ the space of all policies. We fix an arbitrary policy $\pi\in\V$, and construct the label-state joint measure of the representative player controlled by $\pi$ as follows. Recall that at time $t$ given $U=u, X_t=x, \alpha_t=a$, the law of next state $X_{t+1}$ follows the probabilistic kernel $[0,1]\times \R^d \times A \to \R^d$:
\begin{align*}
    \L(X_{t+1} | X_t=x, U_t=u, \alpha_t=a) (dy) =  P_t(dy | x, \sW \mu_t(u), a), \qquad \forall y\in\R^d
,\end{align*}
and the control process $\alpha_t$ follows
\begin{align*}
    \L(\alpha_t)(da) = \pi_t(da), \qquad \forall a\in A
.\end{align*}
We may thus consider
\begin{align*}
    \widehat{P}^{\pi, \mu}_t(dy|u,x) :=\L(X_{t+1} | X_t=x, U_t=u) (dy) =  \int_A P_t(dy | x, \sW \mu_t(u), a) \pi_t(da),  \qquad \forall y\in\R^d
,\end{align*}
and we use the superscript to emphasize that the law is controlled by the policy $\pi$. Note that $\V_U\ni\pi\mapsto\widehat{P}^{\pi, \mu}_t(u,x)\in\P(\R^d)$ is measurable. The collection of kernels $\{ \widehat{P}^\pi_t \}_{t\in\T}$ (recall $\T=\{0,1,\dots, T-1\}$) along with the initial law $\lambda$ implies a label-state joint law in $\P_{\mathrm{unif}}([0,1]\times \C)$
\begin{align*}
    \widehat{P}^{\pi,\mu}(du, dx) := \L(U, X)(du, dx) = \lambda(du, dx_0) \prod_{t\in\T} \widehat{P}^{\pi,\mu}_t(dx_{t+1}|u,x_{t}) , \qquad \forall (u,x)\in[0,1]\times\C
,\end{align*}
which is the label-state joint measure of the representative player, when her state dynamic is controlled by $\pi$. Since the space $[0,1]\times \C$ is a standard measurable space, this is understood as a regular version of the kernel from $\V$ to $[0,1]\times \C$.

\begin{lemma} \label{lemma:cont.of.Phat.pi}
    Under \cref{assump:existence}(\ref{assump:P.cont.in.action}), for any $\mu\in\P_{\mathrm{unif}}([0,1]\times\C)$, $\pi\mapsto \widehat{P}^{\pi,\mu}$ is continuous. In particular, $\pi_t\mapsto \widehat{P}^{\pi,\mu}_t(u,x)$ is continuous for every $(u,x)\in[0,1]\times\R^d$.
\end{lemma}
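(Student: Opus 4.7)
The proof decomposes into the pointwise kernel continuity (the ``in particular'' part), which is a direct application of Assumption \ref{assump:existence}(\ref{assump:P.cont.in.action}), and the bootstrap to joint measure continuity via a backward induction in time.

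For the kernel-level statement, I would fix $(u,x)$ and $t$. For any $\phi \in C_b(\R^d)$ one can write
\begin{equation*}
\int_{\R^d} \phi(y)\, \widehat{P}^{\pi,\mu}_t(u,x;dy) = \int_A g(a)\, \pi_t(da), \qquad g(a) := \int_{\R^d} \phi(y)\, P_t(dy\,|\,x, \sW\mu_t(u), a).
\end{equation*}
Assumption \ref{assump:existence}(\ref{assump:P.cont.in.action}) makes $a \mapsto P_t(x, \sW\mu_t(u), a)$ weakly continuous, hence $g$ is bounded and continuous on the compact set $A$. Therefore $\pi^n_t \Rightarrow \pi_t$ forces $\int g\,d\pi^n_t \to \int g\,d\pi_t$, and the Portmanteau theorem yields $\widehat{P}^{\pi^n,\mu}_t(u,x) \Rightarrow \widehat{P}^{\pi,\mu}_t(u,x)$.

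For the joint continuity, I would check $\widehat{P}^{\pi^n,\mu} \Rightarrow \widehat{P}^{\pi,\mu}$ against a separating class of product test functions $\phi(u,x_0,\dots,x_T) = \phi_0(u,x_0) \prod_{s=1}^{T}\psi_s(x_s)$ with $\phi_0 \in C_b([0,1]\times\R^d)$ and $\psi_s \in C_b(\R^d)$. Define the backward value
\begin{equation*}
V_T^\pi(u,x_T) := \psi_T(x_T), \qquad V_t^\pi(u,x_t) := \psi_t(x_t) \int V_{t+1}^\pi(u,y)\, \widehat{P}^{\pi,\mu}_t(u,x_t;dy),
\end{equation*}
for $1\le t\le T-1$, so that $\int\phi\,d\widehat{P}^{\pi,\mu} = \int \phi_0(u,x_0) \int V_1^\pi(u,y)\,\widehat{P}^{\pi,\mu}_0(u,x_0;dy)\,\lambda(du,dx_0)$. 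Bounded convergence against $\lambda$ then reduces the claim to the pointwise convergence $V^{\pi^n}_t(u,x_t) \to V^\pi_t(u,x_t)$ at every $(u,x_t)$, which I would prove by backward induction on $t$ with trivial base case at $t=T$.

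The main obstacle is the inductive step, since when $\pi^n\to\pi$ both the integrand $V^{\pi^n}_{t+1}$ and the kernel $\widehat{P}^{\pi^n,\mu}_t(u,x_t;\cdot)$ move simultaneously with $n$, so neither Portmanteau nor bounded convergence applies in one shot. I would decompose
\begin{equation*}
\int V^{\pi^n}_{t+1}\,d\widehat{P}^{\pi^n,\mu}_t - \int V^\pi_{t+1}\,d\widehat{P}^{\pi,\mu}_t = \int\bigl(V^{\pi^n}_{t+1}-V^\pi_{t+1}\bigr)\,d\widehat{P}^{\pi^n,\mu}_t + \int V^\pi_{t+1}\,d\bigl(\widehat{P}^{\pi^n,\mu}_t - \widehat{P}^{\pi,\mu}_t\bigr),
\end{equation*}
handling the second piece via the kernel continuity already established applied to the bounded integrand $V^\pi_{t+1}(u,\cdot)$, and the first piece via Fubini by rewriting it as $\int_A\bigl[\int(V^{\pi^n}_{t+1}-V^\pi_{t+1})(u,y)\,P_t(dy\,|\,x_t,\sW\mu_t(u),a)\bigr]\pi^n_t(da)$. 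The inner bracket tends to zero pointwise in $a$ by the induction hypothesis together with bounded convergence, is uniformly bounded by $2\|\phi\|_\infty$, and inherits continuity in $a$ from Assumption \ref{assump:existence}(\ref{assump:P.cont.in.action}); combining this with $\pi^n_t\Rightarrow\pi_t$ completes the passage to the limit. The delicate step is verifying that the inner bracket's pointwise convergence in $a$ is uniform enough to interact correctly with the weakly varying measures $\pi^n_t$, which I expect to be the main technical hurdle and likely requires exploiting compactness of $A$ together with an equicontinuity argument for the family $\{a\mapsto\int V^{\pi^n}_{t+1}(u,y)\,P_t(dy\,|\,x_t,\sW\mu_t(u),a)\}_n$.
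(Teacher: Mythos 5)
Your ``in particular'' step is correct and matches the paper's mechanism at the single-kernel level: for fixed $(u,x,t)$, \cref{assump:existence}(\ref{assump:P.cont.in.action}) makes $a\mapsto P_t(x,\sW\mu_t(u),a)$ weakly continuous, so $g$ is bounded continuous on the compact set $A$ and $\pi^n_t\Rightarrow\pi_t$ gives the claim. (One small caveat: to upgrade convergence against product test functions to weak convergence of $\widehat{P}^{\pi^n,\mu}$ you need the product class to be \emph{convergence determining}, not merely separating; this holds for finite products of Polish spaces, and can in any case be supplemented by the tightness of $\{\widehat{P}^{\pi^n,\mu}\}$ established in \cref{prop:existence.of.optimizer}.)

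The joint-continuity part has a genuine gap, and it sits in the piece you treat as settled rather than the one you flag. In $\int V^{\pi}_{t+1}\,d\bigl(\widehat{P}^{\pi^n,\mu}_t-\widehat{P}^{\pi,\mu}_t\bigr)$ you invoke the kernel convergence $\widehat{P}^{\pi^n,\mu}_t(u,x)\Rightarrow\widehat{P}^{\pi,\mu}_t(u,x)$, but weak convergence controls integrals of bounded \emph{continuous} functions only, and $V^{\pi}_{t+1}(u,\cdot)$ is in general merely bounded measurable: it is produced by pushing $\psi_{t+2},\dots,\psi_T$ backward through the kernels $x\mapsto P_s(x,\sW\mu_s(u),a)$, and \cref{assump:existence}(\ref{assump:P.cont.in.action}) gives no continuity of $P_s$ in the state argument. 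The equicontinuity you hope will rescue the first piece fails for the same reason: even the continuity in $a$ of $a\mapsto\int\bigl(V^{\pi^n}_{t+1}-V^{\pi}_{t+1}\bigr)(u,y)\,P_t(dy\mid x,\sW\mu_t(u),a)$ amounts to testing a weakly continuous kernel against a merely measurable integrand. To your credit, you have located the genuine difficulty rather than created one: the paper's own proof takes a more compact route, writing $\int\phi\,d\widehat{P}^{\pi^n,\mu}$ as an integral of $\psi(a_0,\dots,a_{T-1})=\int\phi\prod_t P_t(dx_{t+1}\mid x_t,\sW\mu_t(u),a_t)$ against the product measure $\otimes_t\pi^n_t$ and asserting continuity of $\psi$ via ``weak convergence on product spaces''---but that argument applies to independent products, whereas the Markov-chain law chains the kernels through the state. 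The toy data $P_0(x_0,\cdot,a_0)=\delta_{a_0}$, $P_1(x_1,\cdot,a_1)=\delta_{1_{\{x_1>1/2\}}}$, $\pi^n_0=\delta_{1/2+1/n}$ satisfy \cref{assump:existence} yet make $\pi\mapsto\widehat{P}^{\pi,\mu}$ discontinuous. So for $T\ge 2$ neither your induction nor the paper's continuity claim for $\psi$ closes without additional regularity of $P_t$ in $x$ (e.g.\ the joint continuity of \cref{assump:general.kernal.approx.eqbm}); under such an assumption your decomposition does go through, since $V^{\pi}_{t+1}(u,\cdot)$ becomes continuous and the first piece can be handled with \cref{lemma:joint.continuity}.
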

\begin{proof}
Let $\{\pi^n\}\subset\V$ be any sequence of policies such that $\pi^n\Rightarrow\pi$ for some $\pi\in\V$. For any $\phi:[0,1]\times\C\to \R$ bounded continuous,
\begin{align*}
    & \int_{[0,1]\times\C} \phi(u,x) \widehat{P}^{\pi^n,\mu}(du, dx) \\
    &= \int_{[0,1]\times\R^d}\Big[  \int_{A^T} \int_{(\R^d)^T} \phi(u,x_0,\dots, x_T) \prod_{t\in\T} P_t(dx_{t+1} | x_t, \sW \mu_t(u), a_t) \pi^n(da_0,\dots, da_{T-1})\Big]\lambda(du, dx_0) \\
    &=:\int_{[0,1]\times\R^d}\Big[  \int_{A^T} \psi(a_0,\dots, a_{T-1}) \pi^n(da_0,\dots, da_{T-1})\Big]\lambda(du, dx_0)
,\end{align*}
where
\begin{align*}
\psi(a_0,\dots, a_{T-1}):=\int_{(\R^d)^T} \phi(u,x_0,\dots, x_T) \prod_{t\in\T} P_t(dx_{t+1} | x_t, \sW \mu_t(u), a_t)
.\end{align*}
We know that $a_t\mapsto P_t(dx_{t+1} | x_t, \sW \mu_t(u), a_t)$ is continuous for each $t\in\T$ by \cref{assump:existence}(\ref{assump:P.cont.in.action}), and since $(\R^d)^T$ is separable, with the standard measure theory argument for weak convergence on a product space, for instance, \citep[Chapter 2]{BillWeakConvergence}, the map $\psi$ is continuous.
Thus, $\langle \pi^n, \psi\rangle \to \langle \pi, \psi\rangle$, and
\begin{align*}
    & \int_{[0,1]\times\C} \phi(u,x) \widehat{P}^{\pi^n,\mu}(du, dx) \\
    \longrightarrow &\int_{[0,1]\times\R^d}\Big[  \int_{A^T} \psi(a_0,\dots, a_{T-1}) \pi(da_0,\dots, da_{T-1})\Big]\lambda(du, dx_0)\\
    &=\int_{[0,1]\times\C} \phi(u,x) \widehat{P}^{\pi,\mu}(du, dx)
.\end{align*}
\end{proof}

Define the probability space $\Omega:=\mathcal{V}\times [0,1] \times \mathcal{C}$, equipped with the product $\sigma$-algebra. A typical element of $\Omega$ is $(\pi,u,x)$, where we understood them as a policy, a label of the representative player, and the player's path, respectively. Let the coordinate maps be $\Lambda, U, X$ respectively. The filtration is given by $\F_t = \sigma\{ \Lambda|_{[t]\times A}, U, \{X_s\}_{0\le s\le t} \}$.

The collection of admissible laws $\RR(\mu)$ is defined as the set
\begin{align*}
    \RR(\mu) := \{ R\in\P(\Omega) : R \text{ admits disintegration } R(d\pi,du,dx)=R_{\Lambda}(d\pi)\widehat{P}^{\pi,\mu}(du, dx) \text{ for some } R_{\Lambda}\in\P(\V) \}
.\end{align*}
Define a random variable $\Xi^{\mu}:\Omega\to \R$ by
\begin{align} \label{eq:gamma}
    \Xi^{\mu}(\pi,u,x) := \sum_{t\in\T} \int_A f_t(\sW \mu_t(u), x_t, a) \pi_t(da) + g(x_T, \sW \mu_T(u))
\end{align}
where $\mu_t$ is the marginal obtained as the image by $(u,x)\mapsto(u,x_t)$. In particular, given a policy $\pi\in\V$, let $R^{(\pi)}(d\tilde{\pi},du,dx) := \delta_{\pi}(d\tilde{\pi}) \widehat{P}^{\tilde{\pi},\mu}(du, dx)$ be an element of $\RR(\mu)$, where $\delta$ is the Dirac measure. It holds that the objective can be rewritten as 
\begin{align*}
    J_W(\mu, \pi) = \langle R^{(\pi)},\Xi^{\mu}\rangle 
.\end{align*}
Thus, the expectation $\langle R,\Xi^{\mu}\rangle $ is a reformulation of the objective, and a single player's objective is to find the collection of measures that maximize this expectation:
\begin{align} \label{eq:optimization}
    \RR^*(\mu) := \{R^*\in\RR(\mu): \langle R^*, \Xi^{\mu}\rangle  \ge \langle R, \Xi^{\mu}\rangle ,\, \forall R\in\RR(\mu)\}
\end{align}
Define the correspondence (i.e., set valued function, see \cite{InfDimAnalysis} for an overview) $\Phi: \P([0,1]\times \C) \to 2^{\P([0,1]\times \C)}$, given by
\begin{align*}
    \Phi(\mu) := \{ R \circ (U,X)^{-1}: R\in\RR^*(\mu) \}
.\end{align*}

The existence of $W -$equilibrium is divided into two steps: we first show the existence of an optimizer to the optimization problem \eqref{eq:optimization} over the probability measures, i.e., $\RR^*(\mu)$ is non-empty for any $\mu$; Next, to obtain a $W -$equilibrium, we aim to find a fixed point for the correspondence $\Phi$.

\begin{proposition} \label{prop:existence.of.optimizer}
    For any $\mu\in\P_{\mathrm{unif}}([0,1]\times \C)$, the following optimization problem admits an optimizer:
    \begin{align*}
        \sup_{R\in\RR(\mu)} \langle R, \Xi^{\mu}\rangle 
    .\end{align*}
\end{proposition}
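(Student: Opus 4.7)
The plan is to treat this as a compactification problem in the spirit of the martingale-problem formulation of stochastic control: exhibit $\RR(\mu)$ as a nonempty compact subset of $\P(\Omega)$, establish upper semicontinuity of $R\mapsto\langle R,\Xi^{\mu}\rangle$, and conclude via the extreme value theorem.

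\textbf{Compactness of $\RR(\mu)$.} Nonemptiness is immediate because $R^{(\pi_{0})}=\delta_{\pi_{0}}\otimes\widehat{P}^{\pi_{0},\mu}\in\RR(\mu)$ for any fixed $\pi_{0}\in\V$. For tightness I would apply \cref{lm:tightness.on.each.coordinate} to the three marginals of $\Omega=\V\times[0,1]\times\C$: the $\Lambda$-marginal lives in $\V=\P(A)^{T}$, which is compact by Prokhorov since $A$ is compact by \cref{assump:existence}(1); the $U$-marginal is uniform on the compact $[0,1]$; and the $\C$-marginal is tight by a time induction chaining the initial tightness of $\{\lambda_{u}\}$ with the transition tightness of $\zeta_{t}$, both from \cref{assump:existence}(\ref{assump:tightness.of.initial},\ref{assump:tightness.of.transition}). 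For closedness, if $R^{n}\in\RR(\mu)$ and $R^{n}\Rightarrow R$, the continuity of the common kernel $\pi\mapsto\widehat{P}^{\pi,\mu}$ supplied by \cref{lemma:cont.of.Phat.pi} (whose hypothesis is \cref{assump:existence}(\ref{assump:P.cont.in.action})) allows \cref{lemma:weak_convergence.kernel} to transfer the disintegration structure to $R$, placing $R\in\RR(\mu)$.

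\textbf{Upper semicontinuity of the objective.} The integrand $\Xi^{\mu}$ is bounded by \cref{assump:existence}(2). If $u\mapsto\sW\mu_{t}(u)$ were continuous, joint continuity of $f_{t},g$ together with \cref{lemma:joint.continuity} would make $\Xi^{\mu}$ jointly continuous on $\Omega$ and weak convergence would conclude. In general $\sW\mu_{t}(u)$ is only measurable in $u$, so I would approximate it: for every $\epsilon>0$, Lusin's theorem yields a closed set $K_{\epsilon}\subset[0,1]$ of Lebesgue measure greater than $1-\epsilon$ on which each $\sW\mu_{t}$ is continuous, from which a bounded continuous surrogate $\widetilde\Xi^{\mu,\epsilon}$ agreeing with $\Xi^{\mu}$ on $\{u\in K_{\epsilon}\}$ can be built. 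Since every $R\in\RR(\mu)$ carries the identical uniform $U$-marginal, $|\langle R,\Xi^{\mu}\rangle-\langle R,\widetilde\Xi^{\mu,\epsilon}\rangle|$ is controlled by an $O(\epsilon)$ bound that is uniform in $R$. Continuity of $R\mapsto\langle R,\widetilde\Xi^{\mu,\epsilon}\rangle$ under weak convergence, combined with the triangle inequality and $\epsilon\downarrow 0$, then delivers continuity, hence upper semicontinuity, of $R\mapsto\langle R,\Xi^{\mu}\rangle$ on $\RR(\mu)$.

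\textbf{Conclusion and main obstacle.} With $\RR(\mu)$ nonempty compact and the objective upper semicontinuous and bounded, the extreme value theorem produces an optimizer $R^{*}\in\RR^{*}(\mu)$. The only genuinely delicate point is the upper semicontinuity step: because no continuity is assumed on $W$, the map $u\mapsto\sW\mu(u)$ may fail to be continuous on a set of positive Lebesgue measure, which obstructs any naive joint-continuity argument. The rigidity of the uniform $U$-marginal shared by every competitor in $\RR(\mu)$ is exactly what absorbs the Lusin truncation error uniformly over the feasible set, and it is this rigidity that lets the proof proceed without imposing continuity of $W$ in \cref{assump:existence}.
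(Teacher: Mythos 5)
Your proposal is correct and follows essentially the same route as the paper: nonemptiness and compactness of $\RR(\mu)$ via coordinate-wise tightness (\cref{lm:tightness.on.each.coordinate} together with \cref{assump:existence}(\ref{assump:tightness.of.initial},\ref{assump:tightness.of.transition})) plus closedness from \cref{lemma:cont.of.Phat.pi} and \cref{lemma:weak_convergence.kernel}, followed by continuity of $R\mapsto\langle R,\Xi^{\mu}\rangle$ and Weierstrass. The only difference is presentational: where the paper invokes \cref{lemma:joint.continuity} as a black box for the continuity step, you reprove that step by hand via Lusin's theorem, correctly identifying that the fixed uniform $U$-marginal shared by every element of $\RR(\mu)$ is what absorbs the approximation error and lets the argument go through even though $u\mapsto\sW\mu_t(u)$ is merely measurable.
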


\begin{proof}
    We want to show that $R\mapsto\langle R, \Xi^{\mu}\rangle$ is a continuous mapping on compact space, and thus the maximum of this mapping is attained.
    With a direct application of \cref{lemma:joint.continuity}, we immediately conclude that the following map is jointly continuous:
    \begin{align} \label{eq:cont.obj}
        \text{Gr}(\RR)\ni (\mu, R) \longmapsto \langle R, \Xi^{\mu}\rangle  \in \R,
    \end{align}
    where $\text{Gr}$ denotes the graph of an operator.
    
    It remains to prove that $\RR(\mu)$ is compact. First we want to show $\RR(\mu)$ is tight for any $\mu$. By \cref{lm:tightness.on.each.coordinate}, it suffices to show that the following sets are tight: $\{R\circ X^{-1}: R\in\RR(\mu) \}$, $\{R\circ U^{-1}: R\in\RR(\mu) \}$, and $\{R\circ \Lambda^{-1}: R\in\RR(\mu) \}$. The last two follows immediately from the fact that $[0,1]$ and $A$ are compact spaces. 

    Fix any $\epsilon'>0$, we could always find some $\epsilon$ such that $(1-\epsilon)^{T+1}>1-\epsilon'$. 
    By \cref{assump:existence}(\ref{assump:tightness.of.initial}) and \ref{assump:existence}(\ref{assump:tightness.of.transition}), let $\{K_t\}_{t\in\T}$ be compact subsets of $\R^d$ such that 
    \begin{align*}
    \inf_{u\in[0,1]} \lambda^u({K_0})>1-\epsilon, \qquad \inf_{\widetilde{P}_t\in \zeta_t} \widetilde{P}_t(K_{t+1}) > 1-\epsilon, \quad \forall t\in\T
    .\end{align*}
    Define $K=\prod_{t=0}^T K_t$, which is a compact subset of $\C$. For every $R\in\RR(\mu)$, let $\widehat{P}^{\pi,\mu}(du, dx) R_{\Lambda}(d\pi)$ be its disintegration. Then,
    \begin{align*}
        (R\circ X^{-1})(K) &= R(\V\times[0,1]\times K)\\
        &= \int_{\V\times[0,1]\times\C} 1_{K}(x) \widehat{P}^{\pi,\mu}(du, dx) R_{\Lambda}(d\pi)\\
        &= \int_{\V} \int_{[0,1]\times\R^d}\Big[ \prod_{t=0}^{T-1} \int_A \int_{\R^d} 1_{K_{t+1}}(x_{t+1}) P_t(dx_{t+1} | x_t, \sW \mu_t(u), a) \pi_t(da)\Big]1_{K_0}(x_0)\lambda(du, dx_0) R_{\Lambda}(d\pi)\\
        &\ge \int_{\V} \int_{[0,1]}\Big[ \prod_{t=0}^{T-1} \int_A (1-\epsilon)\pi_t(da)\Big]\int_{\R^d} 1_{K_0}(x_0) \lambda^u(dx_0)du R_{\Lambda}(d\pi)\\
        &\ge\int_{\V} \int_{[0,1]}(1-\epsilon)^{T+1} du R_{\Lambda}(d\pi) \\
        &= (1-\epsilon)^{T+1} > 1-\epsilon'
    .\end{align*}
    Thus, we have $\inf_{R\in\RR(\mu)} (R\circ X^{-1})(K) > 1-\epsilon'$, which implies the tightness of $\{R\circ X^{-1}: R\in\RR(\mu) \}$. Note that if the state space $\X$ of dynamic $X$ is compact, then $\{R\circ X^{-1}: R\in\RR(\mu) \}$ being tight is immediate. By Prokhorov's theorem, $\RR(\mu)$ is precompact.
    
    We conclude by showing that $\RR(\mu)$ is closed. Let $\{R_n\}\subset \RR(\mu)$, and $R_n\Rightarrow R$. Indeed, each $R_n$ admits disintegration $R_{\Lambda}^n(d\pi)\widehat{P}^{\pi,\mu}(du, dx)$ for some $R_{\Lambda}^n \in \P(\V)$, and the kernel $\pi\mapsto\widehat{P}^{\pi,\mu}$ is continuous by \cref{lemma:cont.of.Phat.pi}. Then \cref{lemma:weak_convergence.kernel} implies that $R$ admits disintegration $R_{\Lambda}(d\pi)\widehat{P}^{\pi,\mu}(du, dx)$ and thus $R\in\RR(\mu)$.
\end{proof}
Next we show that the correspondence $\Phi$ admits a fixed point, and thus the graphon game admits a $W$-equilibrium.
\begin{proposition}
    There exists a fixed point $\widehat{\mu}$ for the correspondence $\Phi$.
\end{proposition}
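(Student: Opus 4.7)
The plan is to apply the Kakutani--Fan--Glicksberg fixed point theorem to $\Phi$ on a suitable compact convex domain. First I would build a $\Phi$-invariant compact convex set $\K\subset\P_{\mathrm{unif}}([0,1]\times\C)$. The tightness bound in the proof of \cref{prop:existence.of.optimizer} depends only on $\lambda$ and the families $\zeta_t$, not on the particular $\pi$ or $\mu$; hence the family $\{R\circ(U,X)^{-1}:\mu\in\P_{\mathrm{unif}}([0,1]\times\C),\,R\in\RR(\mu)\}$ is uniformly tight, and its closed convex hull $\K$ is compact in the weak topology and lies inside $\P_{\mathrm{unif}}([0,1]\times\C)$ (the uniform first-marginal constraint is preserved under weak limits and convex combinations), and by construction $\Phi(\K)\subset\K$.

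Next I would check the three hypotheses on $\Phi:\K\twoheadrightarrow\K$. Non-emptiness of $\Phi(\mu)$ is exactly \cref{prop:existence.of.optimizer}. Convexity of $\Phi(\mu)$ follows from the convexity of $\RR(\mu)$ — a convex combination of two measures with disintegrations $R_{\Lambda}^{i}(d\pi)\widehat{P}^{\pi,\mu}(du,dx)$, $i=1,2$, again admits such a disintegration with $R_{\Lambda}$ being the corresponding convex combination — combined with the linearity of $R\mapsto\langle R,\Xi^{\mu}\rangle$; the pushforward $R\mapsto R\circ(U,X)^{-1}$ is linear and continuous, so it sends the convex set $\RR^{*}(\mu)$ to the convex set $\Phi(\mu)$.

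The main obstacle is upper hemicontinuity (closed graph) of $\Phi$. Suppose $\mu^n\to\mu$ in $\K$ and $\nu^n\in\Phi(\mu^n)$ with $\nu^n\to\nu$; choose $R^n\in\RR^{*}(\mu^n)$ with $\nu^n=R^n\circ(U,X)^{-1}$, and extract a subsequential weak limit $R^n\Rightarrow R$ via tightness of $\{R^n\}$ (inherited from $\K$ together with compactness of $\V$ and $[0,1]$). Two things must be verified. For $R\in\RR(\mu)$, I would extend \cref{lemma:cont.of.Phat.pi} to joint continuity of $(\pi,\mu)\mapsto\widehat{P}^{\pi,\mu}$ by invoking \cref{lemma:continuity.of.graphon.operator} to handle the $\mu$-dependence through $\sW\mu_t(u)$ and then arguing exactly as in the proof of \cref{lemma:cont.of.Phat.pi}; then \cref{lemma:weak_convergence.kernel} identifies the disintegration of $R$ as $R_{\Lambda}(d\pi)\widehat{P}^{\pi,\mu}(du,dx)$. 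For optimality $R\in\RR^{*}(\mu)$, the continuity display \eqref{eq:cont.obj} in the proof of \cref{prop:existence.of.optimizer} already yields joint continuity of $(\mu,R)\mapsto\langle R,\Xi^{\mu}\rangle$ on the graph of $\RR$; passing to the limit in $\langle R^n,\Xi^{\mu^n}\rangle\ge\langle R',\Xi^{\mu^n}\rangle$ for an arbitrary $R'\in\RR(\mu)$ requires approximating $R'$ by $R'_n:=R'_{\Lambda}(d\pi)\widehat{P}^{\pi,\mu^n}(du,dx)\in\RR(\mu^n)$, which converges weakly to $R'$ by the same joint-continuity argument. This gives $\langle R,\Xi^{\mu}\rangle\ge\langle R',\Xi^{\mu}\rangle$, hence $R\in\RR^{*}(\mu)$ and $\nu=R\circ(U,X)^{-1}\in\Phi(\mu)$.

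Kakutani--Fan--Glicksberg then yields $\widehat{\mu}\in\Phi(\widehat{\mu})$, so there exists $\widehat{R}\in\RR^{*}(\widehat{\mu})$ with $\widehat{\mu}=\widehat{R}\circ(U,X)^{-1}$. Disintegrating $\widehat{R}_{\Lambda}$ against the conditional state paths and applying a standard measurable selection to the pointwise maximisers of the Bellman operator produces a closed-loop policy $\widehat{\pi}\in\A_U$ realising the same equilibrium law, completing the proof of \cref{theo:existence}.
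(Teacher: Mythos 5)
Your proposal is correct and follows essentially the same route as the paper: Kakutani--Fan--Glicksberg on a compact convex invariant subset of $\P_{\mathrm{unif}}([0,1]\times\C)$, with non-emptiness of $\Phi(\mu)$ from \cref{prop:existence.of.optimizer}, convexity from the affine structure of $\RR(\mu)$, $\RR^*(\mu)$ and the pushforward, and a closed graph via the joint continuity \eqref{eq:cont.obj}. The only differences are minor and, if anything, in your favor: the paper builds its invariant set explicitly as $\{\lambda\otimes\prod_{t}\widehat{P}_t:\widehat{P}_t\in\overline{\mathrm{conv}}(\zeta_t)\}$ rather than as the closed convex hull of the image of $\Phi$, and your closed-graph step is more careful than the paper's in that you explicitly identify the disintegration of the weak limit $R$ (via a joint-continuity extension of \cref{lemma:cont.of.Phat.pi} and \cref{lemma:weak_convergence.kernel}) and approximate a competitor $R'\in\RR(\mu)$ by elements $R'_n\in\RR(\mu^n)$ before passing to the limit, whereas the paper simply posits that the limit lies in $\RR$ and compares against a fixed $R'$.
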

\begin{proof}
	We aim to apply the Kakutani-Fan-Glicksberg fixed point theorem, which is a classic fixed point theorem for correspondences, see for instance \citep[Theorem 17.55]{InfDimAnalysis}. We need to show the existence of a nonempty, convex, and compact $K\subset \P([0,1]\times \C)$, such that
    \begin{enumerate}
        \item $\Phi(\mu)\subset K$ for each $\mu\in K$.
        \item $\Phi(\mu)$ is nonempty and convex for each $\mu\in K$.
        \item The graph $\mathrm{Gr}(\Phi)=\{(\mu,\mu'): \mu\in K, \mu'\in \Phi(\mu) \}$ is closed.
    \end{enumerate}
    We start from defining $K$. Note that $\lambda$ is a fixed initial measure. Let
    \begin{align*}
        K:=\{\lambda \otimes \prod_{t=0}^{T-1} \widehat{P}_t: \widehat{P}_t\in \overline{\mathrm{conv}}(\zeta_t)\}
    ,\end{align*}
    where $\overline{\mathrm{conv}}(\cdot)$ denotes the closed convex hull of a set, and $\otimes$ is the combinations of probabilistic kernels on the product space. $K$ is obviously non-empty. By construction, $K$ is the finite Cartesian product of convex sets, and thus $K$ is convex. To show $K$ is compact, it suffices to show $\overline{\mathrm{conv}}(\zeta_t)$ is compact for each $t\in \T$, since Tychonoff's theorem asserts that an arbitrary product of compact spaces is again compact. This is true because $\zeta_t$ is tight, and thus precompact by Prokhorov's theorem, and the closed convex hull of a precompact set is compact in a locally convex Hausdorff space. Again, if the value space $\X$ of $X$ is compact, let $K=\P([0,1]\times\C)$ and $K$ is compact automatically.

    For each $R\in\RR(\mu)$, let it admit the disintegration $R=R_{\Lambda}\otimes\widehat{P}$:
    \begin{align*}
    R_{\Lambda}(d\pi)\widehat{P}^{\pi,\mu}(du, dx) = 
    \Big[\lambda(du, dx_0) \prod_{t=0}^{T-1} \int_A P_t(dx_{t+1} | x_t, \sW \mu_t(u), a) \pi_t(da)\Big] R_{\Lambda}(d\pi)
    .\end{align*}
    We claim that for any $t\in\T$, 
    \begin{align*}
        \widehat{P}^{\pi, \mu}_t(dx_{t+1}|u,x_t)=\int_A P_t(dx_{t+1} | x_t, \sW \mu_t(u), a) \pi_t(da) \in \overline{\mathrm{conv}}(\zeta_t)
    ,\end{align*}
    since it is the limit of convex combinations of $P_t(\cdot | x_t, \sW \mu_t(u), a)\in \zeta_t$. Thus, for any $(\pi,\mu)\in\V\times\P_{\mathrm{unif}}([0,1]\times\C)$, the measure $\widehat{P}^{\pi,\mu}\in\P_{\mathrm{unif}}([0,1]\times\C)$ belongs to $K$. The pushforward of $R$ onto the $(U,X)$ coordinate is
    \begin{align*}
        (R\circ(U,X)^{-1})(du, dx) = \int_{\V}\widehat{P}^{\pi,\mu}(du, dx) R_{\Lambda}(d\pi)
    ,\end{align*}
    which is also the limit of a sequence of convex combinations of $\widehat{P}^{\pi,\mu}(du, dx)$, indexed by $\pi$. Thus, by the closeness and compactness of $K$, $R\circ(U,X)^{-1}\in K$, and thus $\Phi(\mu)\subset K$.
    
    To show the convexity of $\Phi(\mu)$, we start with showing $\RR(\mu)$ is convex since for any $R^1=R^1_{\Lambda}\otimes \widehat{P}$ and $R^2=R^2_{\Lambda}\otimes \widehat{P}$ and $\lambda\in[0,1]$, $\lambda R^1+(1-\lambda) R^2 = (\lambda R^1_{\Lambda}+(1-\lambda)R^2_{\Lambda})\otimes \widehat{P}\in \RR(\mu)$. Convexity of $\RR^*(\mu)$ follows from the linearity of $R\mapsto \langle P, \Xi^{\mu}\rangle $ and the convexity of $\RR(\mu)$, and thus the convexity of $\Phi(\mu)$ follows from the linearity of map $R\mapsto R\circ (U,X)^{-1}$ and the convexity of $\RR^*(\mu)$.

    It remains to show the closeness of the graph of $\Phi$. We first show the closeness of the following set:
    \begin{align*}
        \{ (\mu,R): \mu\in K, R\in\RR^*(\mu) \}
    .\end{align*}
    Let $\mu_n\Rightarrow \mu$ and $R_n\Rightarrow R$ with $\mu_n, \mu\in K$, $R_n\in \RR^*(\mu_n)$, and $R\in\RR$. To show that $R\in\RR^*(\mu)$, we use the continuity condition \eqref{eq:cont.obj}, and for any $R'\in\RR$,
    \begin{align*}
        \langle R,\Xi^{\mu}\rangle  = \lim_{n\to\infty} \langle R_n,\Xi^{\mu_n}\rangle  \ge \lim_{n\to\infty} \langle R',\Xi^{\mu_n}\rangle  = \langle R',\Xi^{\mu}\rangle 
    .\end{align*}
    Thus, $\langle R,\Xi^{\mu}\rangle  \ge \langle R', \Xi^{\mu}\rangle $ for any $R'\in\RR$. The by the continuity of $R\mapsto R\circ(U,X)^{-1}$ and compactness of $K$, we have the closeness of Gr($\Phi$).
\end{proof}

\subsection{Closed-Loop Equilibrium Optimal Policy}
In this section we show the second part of theorem \ref{theo:existence}, that the equilibrium optimal open-loop policy can be made closed-loop. 
\begin{proposition}
    Let $\mu\in\P_{\mathrm{unif}}([0,1]\times \C)$, and $R\in\RR(\mu)$.
    Then, there exists a closed-loop optimal policy, in the following sense: there exists a measurable function $\pi:\T\times[0,1]\times \R^d\to\P(A)$, and $R^0\in\RR(\mu)$, such that
    \begin{enumerate}
        \itemsep 4pt
        \item $R^0(\Lambda_t(da)=\pi_t(U, X_t)(da),\ \forall t\in\T)=1$,
        \item $\int_{\Omega} \Xi^{\mu} dR^0 \ge \int_{\Omega}\Xi^{\mu}dR$,
        \item $R^0 \circ (U,X_t)^{-1}=R\circ(U, X_t)^{-1},\ \forall t\in \T$.
    \end{enumerate}
\end{proposition}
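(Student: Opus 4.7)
The plan is to exploit the standard Markovian projection (mimicking) argument: extract a closed-loop policy by conditioning the open-loop control on the current label-state pair, then show that the resulting measure matches $R$ on all the relevant marginals and on expected reward.

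First I would define the closed-loop policy $\pi_t:[0,1]\times \R^d \to \P(A)$ as the regular conditional distribution of $\Lambda_t$ given $(U,X_t)$ under $R$: for each Borel $B\subset A$,
\[
    \pi_t(u,x)(B) := \E_R\!\left[\Lambda_t(B)\,\middle|\, U=u,X_t=x\right].
\]
Existence of a measurable $\P(A)$-valued version follows from the Polish structure of all spaces involved together with standard regular-conditional-distribution arguments. I would then construct $R^0$ via the induced closed-loop recursion: start with $(U,X_0)\sim\lambda$, set $\Lambda_t := \pi_t(U,X_t)$, and draw $X_{t+1}$ from the kernel $\int_A P_t(\cdot\,|\,X_t,\sW\mu_t(U),a)\,\pi_t(U,X_t)(da)$. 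Property (1) is immediate by construction.

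Property (3) I would prove by induction on $t$. At $t=0$ both $R$ and $R^0$ share the initial law $\lambda$. For the inductive step, under $R$ the tower property and linearity give that the conditional law of $X_{t+1}$ given $(U,X_t)=(u,x)$ equals
\[
    \int_A P_t(\cdot\,|\,x,\sW\mu_t(u),a)\,\E_R[\Lambda_t\,|\,U=u,X_t=x](da) = \int_A P_t(\cdot\,|\,x,\sW\mu_t(u),a)\,\pi_t(u,x)(da),
\]
which is exactly the transition kernel used to build $R^0$; combined with the inductive hypothesis that the $(U,X_t)$-marginals agree, this yields equality of the $(U,X_{t+1})$-marginals. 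Property (2) then drops out essentially for free: the running reward at time $t$ is linear in $\Lambda_t$, so conditioning on $(U,X_t)$ lets one replace $\Lambda_t$ by $\pi_t(U,X_t)$ without changing the $R$-expectation; the resulting quantity depends only on the $(U,X_t)$-marginal and hence agrees under $R^0$ by property (3). The terminal reward depends only on $(U,X_T)$ and is handled identically. In fact one obtains equality, not merely inequality, of the two expected rewards.

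The delicate step, which I expect to be the main obstacle, is confirming that $R^0\in\RR(\mu)$, i.e.\ that $R^0$ admits a factorization of the form $R^0(d\pi,du,dx)=R^0_{\Lambda}(d\pi)\widehat{P}^{\pi,\mu}(du,dx)$. The natural candidate for $R^0_{\Lambda}$ is the pushforward of the closed-loop $(U,X)$-law under the measurable map $(u,x)\mapsto(\pi_0(u,x_0),\dots,\pi_{T-1}(u,x_{T-1}))$. To verify the disintegration I would test the identity on a generating $\pi$-system of cylinder sets and use the fact that, once $\Lambda_t$ is specialized to the value $\pi_t(U,X_t)$, the integrand defining $\widehat{P}^{\cdot,\mu}$ collapses precisely to the closed-loop transition kernel used in Step~2. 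The care is in choosing a version of the regular conditional distribution consistent with the pushforward so that the disintegration is an identity of measures rather than merely an almost-sure statement; this is where the Polishness of $\P(A)$ and the standard Borel structure of $\Omega$ come in.
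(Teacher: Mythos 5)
Your proposal follows essentially the same route as the paper's proof: take $\pi_t$ to be the regular conditional distribution of $\Lambda_t$ given $(U,X_t)$ under $R$ (the paper realizes this by disintegrating the occupation measures $\eta_t$), use the tower property to show the time-$t$ marginals and the reward (linear in $\Lambda_t$) are preserved, and obtain $R^0$ by pushing the state law forward together with the induced closed-loop policy — i.e., the discrete-time Markovian projection argument the paper itself cites. The one point where you are more careful than the paper is the verification that $R^0\in\RR(\mu)$, which the paper asserts without argument; flagging that disintegration check as the delicate step is the right instinct.
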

\begin{corollary}
    There exists a closed-loop equilibrium optimal policy to the graphon game.
\end{corollary}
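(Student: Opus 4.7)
The plan is to construct the closed-loop policy from $R$ via a regular conditional expectation of $\Lambda_t$ given the observable pair $(U, X_t)$, and then to form $R^0$ by running the resulting closed-loop Markov dynamics starting from $\lambda$.

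First I would construct $\pi$. Since $[0,1]\times\R^d$ is Polish and $A$ is compact, a regular conditional distribution of $\Lambda_t$ given $(U, X_t)$ exists under $R$, yielding a jointly measurable map $\pi_t:[0,1]\times\R^d\to\P(A)$ with
\[
\pi_t(u,x)(B) := \E_R\!\big[\Lambda_t(B)\,\big|\, U=u,\, X_t=x\big], \qquad B\subset A \text{ Borel.}
\]
Then I would define $R^0$ as the joint law on $\Omega$ obtained by drawing $(U, X_0)\sim\lambda$ and, for each $t\in\T$, setting $\Lambda_t := \pi_t(U, X_t)$ and sampling $X_{t+1}\sim P_t(X_t, \sW\mu_t(U), \pi_t(U, X_t))$. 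Property 1 is immediate from this construction.

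Property 3 follows by induction on $t$. The conditional law of $X_{t+1}$ given $(U, X_t)=(u,x)$ is $\int_A P_t(\cdot\,|\,x,\sW\mu_t(u),a)\,\pi_t(u,x)(da)$ under $R^0$, while under $R$, by the disintegration structure of $\RR(\mu)$, it equals $\E_R\!\big[\int_A P_t(\cdot\,|\,x,\sW\mu_t(u),a)\,\Lambda_t(da)\,\big|\, U=u, X_t=x\big]$; these coincide because $a\mapsto P_t(x,m,a)$ enters linearly against the action distribution, so conditional expectation commutes with the integral. Matching initial laws plus matching one-step kernels give $R^0\circ(U,X_t)^{-1}=R\circ(U,X_t)^{-1}$ for every $t$. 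The same linearity applied to $f_t$ and the tower property give $\E_R[\int f_t(X_t,\sW\mu_t(U),a)\Lambda_t(da)] = \E_R[\int f_t(X_t,\sW\mu_t(U),a)\pi_t(U,X_t)(da)]$, and Property 3 then lets me replace the outer $R$ by $R^0$; the terminal reward matches trivially since $(U,X_T)$ has the same law under both. Thus $\langle R^0,\Xi^\mu\rangle = \langle R,\Xi^\mu\rangle$, verifying Property 2 (with equality).

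The main obstacle is to confirm $R^0\in\RR(\mu)$. By construction, $\Lambda$ is a measurable function of $(U,X)$ under $R^0$, so the conditional of $(U,X)$ given $\Lambda$ is degenerate and a priori need not coincide with $\widehat{P}^{\Lambda,\mu}$. I would handle this by taking $R^0_\Lambda$ to be the pushforward of $R^0$ under $\Lambda$ and verifying the disintegration identity by testing against bounded continuous functions $\phi(\pi,u,x)$: the linearity of $P_t$ in the action distribution, combined with the tower property and Property 3, reduces the identity to the one-step kernel computation above, showing that the closed-loop Markov chain and the $R^0_\Lambda$-mixture of open-loop chains produce the same joint law on $\Omega$. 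The corollary then follows by applying the proposition to any $\widehat R\in\RR^*(\widehat\mu)$ supplied by \cref{prop:existence.of.optimizer} at the equilibrium measure $\widehat\mu$: the resulting $R^0\in\RR(\widehat\mu)$ satisfies $\langle R^0,\Xi^{\widehat\mu}\rangle = \langle \widehat R,\Xi^{\widehat\mu}\rangle = \sup$, hence $R^0\in\RR^*(\widehat\mu)$, while Properties 1 and 3 give $R^0\circ(U,X)^{-1}=\widehat\mu$, so $(\widehat\mu,\pi)$ is a $W$-equilibrium with closed-loop optimal policy $\pi$.
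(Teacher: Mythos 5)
Your proposal follows essentially the same route as the paper: the closed-loop policy is obtained as the regular conditional expectation of $\Lambda_t$ given $(U,X_t)$ (the paper realizes this by disintegrating the occupation measures $\eta_t$), and the marginal flow and objective value are preserved via the tower property together with the linearity of $\int_A P_t(\cdot\mid x,m,a)\,\pi(da)$ in the action distribution, exactly as in the paper's one-step kernel computation. The one point where you go beyond the paper --- flagging and attempting to verify the membership $R^0\in\RR(\mu)$ --- concerns a step the paper itself asserts without justification, so your attempt matches the paper's argument in both substance and level of rigor.
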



\begin{proof}
    We first find a space $(\Omega^1, \F^1, R^1)$ supporting a random variable $U^1$, an adapted process $X^1$ valued in $\R^d$, and a $\P(A)$-valued adapted process $\Lambda_t$ such that
    \begin{align*}
        &(U^1, X_0^1)\sim \lambda, \quad X^1_{t+1}\sim P_t( X^1_t, \sW \mu_t(U^1), \Lambda_t), \\
        & R^1 \circ \left(U, X^1 \right)^{-1} = \mu
    .\end{align*}
    The existence of such a space is guaranteed by the reasoning in \cref{section:compatification.and.fixedpoint}. We claim that there exists a measurable $\pi:\T\times [0,1]\times \R^d\to \P(A)$ such that
    \begin{align*}
        \pi_t(U^1, X^1_t) = \E^{R^1} (\Lambda_t \,|\, U^1, X^1_t) ,\qquad R^1-a.s.\,\, \forall t\in\T
    .\end{align*}
    More precisely, for every bounded measurable $\phi:[0,1]\times \R^d\times A\to\R$,
    \begin{align*}
        \int_A \phi(U^1, X^1_t, a)\pi_t(U^1, X^1_t)(da) = \E^{R^1}\left( \int_A \phi(U^1, X^1_t, a)\Lambda_t(da)\, \Big|\, U^1, X^1_t \right), \qquad R^1-a.s., \forall t\in\T
    .\end{align*}
    Define a collection of measures, $\{\eta_t\}_{t\in\T}$, $\eta_t\in\P([0,1]\times \R^d\times A)$ by
    \begin{align*}
        \eta_t(C) := \E^{R^1}\left[ \int_A 1_C(t, U^1_t, X^1_t, a)\Lambda_t(da) \right]
    .\end{align*}
    Let $\eta_t$ admit disintegration $\eta_t(du, dx, da)=\eta'_t(du, dx)\pi_t(u,x)(da)$, where $\eta'_t$ is the marginal of $\eta_t$ onto $[0,1]\times \R^d$. Note that actually $\eta'_t(du, dx)=\mu_t$, since for any measurable $F\subset [0,1]\times \R^d
    $,
    \begin{align*}
        \eta'_t(F) = \eta_t(F\times A) &= \E^{R^1}\left[\int_A 1_F(U^1_t, X^1_t)1_A(a)\Lambda_t(da) \right] \\
        &=\E^{R^1}\left[  1_F(U^1_t, X^1_t)\right] = \langle R^1\circ(U, X^1)^{-1}, 1_F \rangle
    .\end{align*}
    
    Fix an arbitrary $t$, for any bounded measurable $h:\times[0,1]\times \R^d\to\R$,
    \begin{align*}
        &\E^{R^1}\left[ h(U^1, X^1_t) \int_A \phi(U^1, X^1_t, a)\pi_t(U^1, X^1_t)(da) \right] \\
        &= \int_{[0,1]\times \R^d} h(u, x) \int_A \phi(u, x, a)\pi_t(u, x)(da) \eta'_t( du, dx) \\
        &= \int_{[0,1]\times \R^d\times A} h(u, x) \phi(u, x, a)\eta_t(du, dx, da) \\
        &= \E^{R^1}\left[ h(U^1, X^1_t)\int_A  \phi(U^1, X^1_t, a)\Lambda_t(da) \right]
    .\end{align*}
    By definition of conditional expectation, the claim follows.
    
    Construct another probability space $(\Omega^2, \F^2, R^2)$ as follows: Let $\Omega^2=[0,1]\times\C$, $U^2$ and $X^2$ are the coordinate maps, and
    \begin{align*} 
        & R^2 := R^1 \circ \left(U^1, X^1 \right)^{-1} = \mu
    .\end{align*}
    In the rest of the proof, we aim to show that $U^2$ and $X^2$ follow the dynamic
    \begin{align*}
        &(U^2, X_0^2)\sim \lambda, \quad X^2_{t+1}\sim P_t(X^2_t, \sW \mu_t(U^2), \pi_t(U^2,X^2_t))
    .\end{align*}
    Fix any bounded continuous $\psi:\R^d\to\R$. For any measurable $h:[0,1]\times \R^d\to\R_+$,
    \begin{align*}
        &\E^{R^2}\left[h(U^2,X^2_t)\psi(X^2_{t+1})\right] \\
        &=\E^{R^1}\left[h(U^1,X^1_t)\psi(X^1_{t+1})\right] \\
        &=\E^{R^1}\left[ h(U^1,X^1_t)
        \E^{R^1}\left(\int_A\int_{\R^d} \psi(y) P_t(\sW \mu_t(U^1),X^1_t, a)(dy)\Lambda_t(da) \, \Big| \, U^1, X^1_{t} \right)
        \right] \\
        &=\E^{R^1}\left[h(U^1,X^1_t)\int_A\int_{\R^d} \psi(y) P_t(\sW \mu_t(U^1), X^1_t, a)(dy)\pi_t(U^1, X^1_t)(da)\right] \\
        &=\E^{R^2}\left[h(U^2,X^2_t)\int_A\int_{\R^d} \psi(y) P_t(\sW \mu_t(U^2),X^2_t, a)(dy)\pi_t(U^2, X^2_t)(da)\right]
    .\end{align*}
    By definition of conditional expectation, we claim that
    \begin{align*}
        \E^{R^2}\left[\psi(X^2_{t+1})\,|\, U^2,X^2_{t}\right] = \int_A\int_{\R^d} \psi(y) P(t, U^2, \sW \mu_t(U^2),X^2_t, a)(dy)\pi_t(U^2, X^2_t)(da)
    .\end{align*}
    Note that this holds for any bounded continuous $\psi$.
    Finally, let $R^0:=R^2\circ (\{\pi_t(U^2,X^2_t)\}_{t\in T}, U^2, X^2)^{-1}$, then $R^0\in\RR(\mu)$, and the objective value is preserved:
    \begin{align*}
        \int_{\Omega} \Xi^{\mu}dR^0 &= \E^{R^2}\left[\sum_{t\in T}\int_A f(t, U^2, X^2_t, \sW \mu_t(U^2), a) \pi(t, U^2, X^2_t)(da) + g(X^2_T, \sW \mu_T(U^2))\right] \\
        & = \E^{R^1}\left[\sum_{t\in T}\int_A f(t, U^1, X^1_t, \sW \mu_t(U^1), a) \pi_t(U^1, X^1_t)(da) + g(X^1_T, \sW \mu_T(U^1))\right] \\
        & = \E^{R^1}\left[\sum_{t\in T}\int_A f(t, U^1, X^1_t, \sW \mu_t(U^1), a) \Lambda_t(da) + g(X^1_T, \sW \mu_T(U^1))\right] \\
        & = \int_{\Omega} \Xi^{\mu}dR
    .\end{align*}
\end{proof}
\begin{remark}
The proof is closely based on \cite{Lacker2015MRT}, which utilized a remarkable result called Markovian projection theorem (or Mimicking theorem), originated from \cite{Shreve2013}. However, the discrete time setting greatly simplifies the proof and just the definition of conditional expectation would work.
\end{remark}

\section{Proof for Uniqueness} \label{section:proof.of.uniqueness}
Let $(\mu,\pi)$ and $(\nu,\rho)$ be two different $W$-equilibria, and their Markovian state dynamic being $X^\pi$ and $X^{\rho}$ respectively. By construction, the processes $\pi$ and $\rho$ must be different, since otherwise $X^{\pi}$ and $X^{\nu}$ would be the same, and then $\mu$ and $\nu$ will be the same as well. Therefore, by the uniqueness of optimal policy, we have
\begin{align*}
    J_{W}(\mu,\pi) - J_{W}(\mu, \rho) > 0 \quad \textrm{and}\quad J_{W}(\nu,\rho) - J_{W}(\nu, \pi) > 0
.\end{align*}
Note that the inequalities are strict. Adding them result in
\begin{align} \label{ineq:LLMono.ge0}
    J_{W}(\mu,\pi) - J_{W}(\nu, \pi) -(J_{W}(\mu, \rho) - J_{W}(\nu,\rho))> 0
\end{align}
Since the Markovian dynamic does not depend on the measure argument, when the population measure is $\mu$, the dynamic controlled by policy $\rho$ is the same pathwise as $X^{\rho}$. This is not true if the assumption is not satisfied, since
\begin{align*}
    &X^{\pi}_{t+1}\sim P_t(X^{\pi}_t, \sW \mu_t(U), \pi_t), \qquad X^{\rho}_{t+1}\sim P_t(X^{\rho}_t, \sW \nu_t(U), \rho_t) 
,\end{align*}
and under population measure $\mu$, the process controlled by $\rho$ follows the dynamic $X^{'}_{t+1}\sim P_t(X^{'}_t, \sW \mu_t(U), \rho_t)$, which is not the same as $X^{\rho}$. Continue with the proof,
\begin{align*}
    J_{W}(\mu,\pi) - J_{W}(\nu, \pi) = \E\bigg[ &\sum_{t\in\T} \left( f^1_t(X^{\pi}_t,\sW\mu_t(U)) - f^1_t(X^{\pi}_t,\sW\nu_t(U)) \right) \\
    &+\sum_{t\in\T} \left( f^2_t(X^{\pi}_t,\pi) - f^2_t(X^{\pi}_t,\pi)\right) 
    + g(X^{\pi}_T,\sW\mu_T(U)) - g(X^{\pi}_T,\sW\nu_T(U))
    \bigg] \\
    =&\sum_{t\in\T} \int_{[0,1]\times\R^d} \big[ f^1_t(x,\sW\mu_t(u)) - f^1_t(x,\sW\nu_t(u)) \big] \mu_t(du,dx) \\
    &+ \int_{[0,1]\times\R^d} \big[g(x,\sW\mu_T(u)) - g(x,\sW\nu_T(u)\big] \mu_T(du,dx)
.\end{align*}
Similarly,
\begin{align*}
    J_{W}(\mu, \rho) - J_{W}(\nu,\rho) 
    =&\sum_{t\in\T} \int_{[0,1]\times\R^d} \big[ f^1_t(x,\sW\mu_t(u)) - f^1_t(x,\sW\nu_t(u)) \big] \nu_t(du,dx) \\
    &+ \int_{[0,1]\times\R^d} \big[g(x,\sW\mu_T(u)) - g(x,\sW\nu_T(u)\big] \nu_T(du,dx)
.\end{align*}
Taking difference and by the assumed Larsy-Lions monotonicity,
\begin{align*}
    &J_{W}(\mu,\pi) - J_{W}(\nu, \pi) -(J_{W}(\mu, \rho) - J_{W}(\nu,\rho)) \\
    &=\sum_{t\in\T} \int_{[0,1]\times\R^d} \big[ f^1_t(x,\sW\mu_t(u)) - f^1_t(x,\sW\nu_t(u)) \big] (\mu_t-\nu_t)(du,dx) \\
    &\quad + \int_{[0,1]\times\R^d} \big[g(x,\sW\mu_T(u)) - g(x,\sW\nu_T(u)\big] (\mu_T-\nu_T)(du,dx)\\
    &\le 0
.\end{align*}
However, this contradicts \eqref{ineq:LLMono.ge0}, and we conclude that $\mu$ and $\nu$ should be the same.

\section{Proof for Approximate Equilibrium} \label{section:proof.of.approx.eqbm}
\subsection{Comparable Dynamics}
Define $I^n_i:=[(i-1)/n, i/n)$ for $i=1, \dots, n-1$, $I^n_n:=[(n-1)/n, 1]$, and $\mathbf{I}^n:=I^n_1\times \dots \times I^n_n$. Let $(\mu, \pi)$ be a $W$-equilibrium, and $X$ be the Markov chain controlled by policy $\pi$. 
Let $X^u$ denote the state process conditional on $U=u$.

Fix $\forall n\in\N$, and any $\textbf{u}^n=(u^n_1,\dots, u^n_n)\in[0,1]^n$. Assign player $i$ the policy
\begin{align*}
    \widehat{\pi}^{n,\textbf{u}^n,i}(t,x_1,\dots,x_n) := \pi(t,u^n_i, x_i)
.\end{align*}
Let $\mathbf{\widehat{X}}^{n,\textbf{u}^n}=(\widehat{X}^{n,\textbf{u}^n,1}, \dots, \widehat{X}^{n,\textbf{u}^n,n})$ be the state dynamic of all the players:
\begin{align*}
    \widehat{X}^{n,\textbf{u}^n,i}_{t+1} \sim P_t(\widehat{X}^{n,\textbf{u}^n,i}_t, \widehat{M}^{n,\textbf{u}^n,i}_t,  \widehat{\pi}^{n,\textbf{u}^n,i}_t(\mathbf{\widehat{X}}^{n,\textbf{u}^n}_{t})),
    \qquad \widehat{X}^{n,\textbf{u}^n,i}_0 \sim \lambda_{u^n_i}
,\end{align*}
where
\begin{align*}
    \widehat{M}^{n,\textbf{u}^n,i}:= \frac{1}{n}\sum_{r=1}^n \xi^n_{ir}\delta_{\widehat{X}^{n,\textbf{u}^n,r}}
,\end{align*}
and $\widehat{M}^{n,\textbf{u}^n,i}_t$ is the time $t$ marginal.
Let $\widehat{X}^{n,\textbf{u}^n,\beta,j}$ denote the dynamic of player $j$ when she changes her policy from $\widehat{\pi}^{n,\textbf{u}^n,j} $ to $\beta$. More specifically, player $j$ follows
\begin{align*}
    \widehat{X}^{n,\textbf{u}^n,\beta,j}_{t+1} \sim P_t(\widehat{X}^{n,\textbf{u}^n,j}_t, \widehat{M}^{n,\textbf{u}^n,(\beta,j),j}_t,  \beta_t),
    \qquad \widehat{X}^{n,\textbf{u}^n,\beta,j}_0 \sim \lambda_{u^n_j}
,\end{align*}
and all other player $i\ne j$ follows
\begin{align*}
    \widehat{X}^{n,\textbf{u}^n,i}_{t+1} \sim P_t(\widehat{X}^{n,\textbf{u}^n,i}_t, \widehat{M}^{n,\textbf{u}^n,(\beta,j),i}_t,  \widehat{\pi}^{n,\textbf{u}^n,i}_t(\textbf{X}^{n,\textbf{u}^n,\beta,j}_{t})),
    \qquad \widehat{X}^{n,\textbf{u}^n,i}_0 \sim \lambda_{u^n_i}
,\end{align*}
where the empirical neighborhood measure is 
\begin{align*}
    \widehat{M}^{n,\textbf{u}^n,(\beta,j),i}:= \frac{1}{n}\left(\sum_{r\ne j} \xi^n_{ir}\delta_{\widehat{X}^{n,\textbf{u}^n,r}} + \xi_{ij}\delta_{\widehat{X}^{n,\textbf{u}^n,\beta,j}}\right)
,\end{align*}
and 
$\textbf{X}^{n,\textbf{u}^n,\beta,j}$ denotes the vector $\textbf{X}^{n,\textbf{u}^n}$ with the $j^\text{th}$ element replaced by $\widehat{X}^{n,\textbf{u}^n,\beta,j}$. 

For any $u\in[0,1], $we define $X^{\pi,u}$ to be the process with marginal $U=u$, controlled by policy $\pi$, i.e.,
\begin{align*}
    X^{\pi, u}_{t+1} \sim P_t(X^{\pi, u}_t, \sW \mu_t(u),  \pi(t,u, X^{\pi, u}_{t}))
    \qquad X^{\pi, u}_0 \sim \lambda_{u}
.\end{align*}

\begin{proposition} \label{theorem:nplayer.converges.to.graphon}
    Assume \cref{assump:general.kernal.approx.eqbm} holds. Let $h:[0,1]\times \R^d\times \M_{+}(\R^d)\to \R$ be a bounded measurable function such that $h(u,\cdot,\cdot)$ is jointly continuous on $\R^d\times \M_{+}(\R^d)$ for each fixed $u$. Then for each $t\in\T$,
    \begin{align}
        \frac{1}{n}\sum_{i=1}^n\E[h(U^n_i, \widehat{X}^{n,\textbf{U}^n,i}_t, \widehat{M}^{n,\textbf{U}^n,i}_t)] \to \E[h(U, X_t, \sW \mu_t(U))]
    .\end{align}
\end{proposition}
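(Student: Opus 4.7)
The plan is to prove the limit by induction on $t$, coupling the $n$-player system to an auxiliary system of conditionally independent copies of the graphon representative player's dynamics. On an enlarged probability space I would construct processes $\widetilde{X}^{n,1},\dots,\widetilde{X}^{n,n}$ such that, conditional on $\mathbf{U}^n=(U^n_1,\dots,U^n_n)$, the $\widetilde{X}^{n,i}$ are independent and each $\widetilde{X}^{n,i}$ has the same law as $X^{\pi,U^n_i}$ defined immediately above the proposition. The inductive statement at time $t$ carries two claims simultaneously: (a) $\widehat{X}^{n,\mathbf{U}^n,i}_t$ is close to $\widetilde{X}^{n,i}_t$ in distribution on average over $i$; and (b) the weighted empirical measure $\widehat{M}^{n,\mathbf{U}^n,i}_t$ is close to $\sW\mu_t(U^n_i)$, again on average over $i$, tested against any bounded continuous $\phi$.

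For $t=0$ I can take $\widehat{X}^{n,\mathbf{U}^n,i}_0=\widetilde{X}^{n,i}_0\sim\lambda_{U^n_i}$ so (a) is trivial. For (b) on the auxiliary system, writing $\widetilde{\mu}^n_t=\frac{1}{n}\sum_j \delta_{(U^n_j,\widetilde{X}^{n,j}_t)}$ and $\widetilde{M}^{n,i}_t=\sW_{\xi^n}\widetilde{\mu}^n_t(U^n_i)=\frac{1}{n}\sum_j \xi^n_{ij}\delta_{\widetilde{X}^{n,j}_t}$, a direct variance computation using the conditional independence of $\{\widetilde{X}^{n,j}_t\}_{j}$ given $\mathbf{U}^n$ gives
\begin{align*}
\frac{1}{n}\sum_{i=1}^n \E\!\left[\big(\langle \widetilde{M}^{n,i}_t-\E[\widetilde{M}^{n,i}_t\mid\mathbf{U}^n],\phi\rangle\big)^2\right] \le \|\phi\|_\infty^2\cdot\frac{1}{n^3}\sum_{i,j=1}^n (\xi^n_{ij})^2,
\end{align*}
which vanishes by \cref{assump:general.kernal.approx.eqbm}(\ref{assump:approx.eqbm.denseness}). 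The conditional mean $\E[\widetilde{M}^{n,i}_t\mid\mathbf{U}^n]$ equals $\sW_{\xi^n}\mu_t(U^n_i)$ up to a sampling error from replacing $U^n_j\sim\mathrm{unif}(I^n_j)$ by uniform integration over $I^n_j$, which itself vanishes on average as $n\to\infty$. The strong operator convergence $W_{\xi^n}\to W$, combined with \cref{lemma:continuity.of.graphon.operator} and \cref{lemma:L1convergence.implies.strong.op.topology}, then yields $\sW_{\xi^n}\mu_t(U^n_i)\to \sW\mu_t(U^n_i)$ on $L^1$-average over $i$, establishing (b) for the auxiliary system.

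To transfer these controls to the genuine $n$-player process $\widehat{X}$ and propagate the induction to time $t+1$, I would use the joint continuity of $P_t$ from the second item of \cref{assump:general.kernal.approx.eqbm}. The two transition laws
\begin{align*}
\widehat{X}^{n,\mathbf{U}^n,i}_{t+1}\sim P_t(\widehat{X}^{n,\mathbf{U}^n,i}_t,\widehat{M}^{n,\mathbf{U}^n,i}_t,\pi_t(U^n_i,\widehat{X}^{n,\mathbf{U}^n,i}_t)), \quad \widetilde{X}^{n,i}_{t+1}\sim P_t(\widetilde{X}^{n,i}_t,\sW\mu_t(U^n_i),\pi_t(U^n_i,\widetilde{X}^{n,i}_t))
\end{align*}
are driven by the common closed-loop equilibrium policy $\pi$ and differ only in their state and measure arguments. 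Using the inductive closeness of these arguments together with the tightness provided by \cref{assump:existence}(\ref{assump:tightness.of.initial},\ref{assump:tightness.of.transition}), a continuous mapping argument upgrades closeness at time $t$ to closeness at time $t+1$. The conclusion then follows by applying \cref{lemma:joint.continuity} to the bounded, fiberwise jointly continuous $h$. The main technical obstacle is the two-way coupling between states and neighborhood measures in the $n$-player dynamics: controlling $\widehat{M}^{n,\mathbf{U}^n,i}_t$ requires that the states $\widehat{X}^{n,\mathbf{U}^n,j}_t$ already resemble the iid copies $\widetilde{X}^{n,j}_t$, while propagating the state-level comparison forward requires approximate equality of $\widehat{M}^{n,\mathbf{U}^n,i}_t$ and $\sW\mu_t(U^n_i)$. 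Carrying both claims jointly through the induction, and absorbing empirical-measure fluctuations via the vanishing second-moment condition on $\xi^n$, is what makes the argument close.
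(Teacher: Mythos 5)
Your proposal is correct and follows essentially the same route as the paper: an auxiliary system of conditionally independent copies of the representative dynamics (the paper's $Y^{n,i}$), a conditional-variance bound absorbed by the second-moment condition \eqref{assumpeq:interaction.matrix.second.moment} plus strong-operator-topology convergence for the conditional mean, and an induction over $t$ that transfers closeness from the auxiliary system to the true $n$-player system via joint continuity of $P_t$ (which the paper implements by pushing the test function $h$ through the kernel to form new test functions $h'$, $h''$ and invoking the inductive hypothesis for those). The only minor imprecision is your "sampling error" remark: since $W_{\xi^n}$ is a step graphon constant on each block $I^n_i\times I^n_j$, the identity $\E[\langle \sW_{\xi^n}\mu^n(U),\phi\rangle\mid U=u]=\int_{[0,1]}W_{\xi^n}(u,v)\E[\phi(X)\mid U=v]\,dv$ is exact, not merely asymptotic.
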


\begin{proof}
Expand the underlying probability space such that it supports independent random elements $(U^n_i, Y^{n,i})$, $\forall i\in [n]$, independent of $\mathbf{\widehat{X}}^{n,\textbf{u}^n}$ and $(U,X)$, and the law satisfies
\begin{align*}
    \L(Y^{n,i} | U^n_i=u) = \L (X|U=u), \qquad \forall u\in I^n_i
.\end{align*}
Equivalently, this means for any $u\in I^n_i$, the conditional law satisfies
\begin{align*}
	Y^{n,i}_{t+1}|(U^n_i=u) \sim P(t, Y^{n,i}_{t}, \sW \mu_t(u),  \pi_t(u, Y^{n,i}_{t}))
.\end{align*}
In particular for every measurable $\phi:[0,1]\times \C\to \R$,
\begin{align} \label{lm:npartition.equiv}
\langle \mu, \phi\rangle=\E{\phi(U,X)}= \frac{1}{n}\sum_{i=1}^n \E{\phi(U^n_i, Y^{n,i})}
.\end{align}
Define the empirical neighborhood measure:
\begin{align*}
    &M^{n,i}:=\frac{1}{n}\sum_{j=1}^n \xi_{ij}^n \delta_{Y^{n,j}}=\frac{1}{n}\sum_{j=1}^n W_{\xi^n}(U^n_i, U^n_j) \delta_{Y^{n,j}}
,\end{align*}
and the empirical label-state joint measure:
\begin{align*}
    \mu^n := \frac{1}{n}\sum_{j=1}^n \delta_{(U^n_i, Y^{n,i})}
.\end{align*}
The theorem is then shown in the following two stages:
\begin{align*}
    \frac{1}{n}\sum_{i=1}^n\E[h(U^n_i, \widehat{X}^{n,\textbf{U}^n,i}_t, \widehat{M}^{n,\textbf{U}^n,i}_t)] \to \frac{1}{n}\sum_{i=1}^n \E[h(U^n_i, Y^{n,i}_t, M^{n,i}_t)]\to \E[h(U, X_t, \sW\mu_t(U))]
.\end{align*}

\textbf{Step i}. We first show that $\sW_{\xi^n}\mu^n(U)\Rightarrow \sW \mu(U)$ in probability. Fix a bounded continuous function $\phi:\R^d\to [-1,1]$, 
it suffices to show $\langle \sW_{\xi^n}\mu^n(U), \phi \rangle \to \langle \sW\mu(U), \phi \rangle$ in probability. This is divided into two substeps. We first claim that $\langle \sW_{\xi^n}\mu^n(U), \phi \rangle - \E[\langle \sW_{\xi^n}\mu^n(U), \phi \rangle|U]\to 0$ in probability. Note that
\begin{align*}
\langle \sW_{\xi^n}\mu^n(u), \phi \rangle = \frac{1}{n}\sum_{j=1}^n W_{\xi^n}(u, U^n_j) \phi(Y^{n,i})
.\end{align*}
 For $u\in I^n_i$, by the independence of $Y^{n,i}$,
\begin{align*}
\mathrm{var}(\langle \sW_{\xi^n}\mu^n(U), \phi \rangle | U=u) = \mathrm{var}(\frac{1}{n}\sum_{j=1}^n \xi^n_{ij} \phi(Y^{n,i})) \le \frac{1}{n^2}\sum_{j=1}^n (\xi^n_{ij})^2
.\end{align*}
Then, by Assumption \eqref{assumpeq:interaction.matrix.second.moment},
\begin{align*}
    &\E\left[ (\langle \sW_{\xi^n}\mu^n(U), \phi \rangle - \E[\langle \sW_{\xi^n}\mu^n(U), \phi \rangle|U])^2 \right] \\
    &=\E\left[ \mathrm{var}(\langle \sW_{\xi^n}\mu^n(U), \phi \rangle | U) \right] \\
    &=\sum_{i=1}^n \int_{I^n_i} \mathrm{var}(\langle \sW_{\xi^n}\mu^n(U), \phi \rangle | U=u) du \\
    &\le \frac{1}{n^3}\sum_{i,j=1}^n (\xi^n_{ij})^2 \to 0
.\end{align*}
Thus, the convergence is in $L^2$. In the second substep we show that $\E[\langle \sW_{\xi^n}\mu^n(U), \phi \rangle|U] \to \langle \sW\mu(U), \phi \rangle$ in probability. By the independence of $(U^n_i, Y^{n,i})$,
\begin{align*}
    \E[\langle \sW_{\xi^n}\mu^n(U), \phi \rangle|U=u] &= \E[\frac{1}{n}\sum_{j=1}^n W_{\xi^n}(u, U^n_j) \phi(Y^{n,i})] \\
    &= \E[W_{\xi^n}(u, U)\phi(X)] \\
    &= \int_{[0,1]} W_{\xi^n}(u, v) \E[\phi(X)|U=v] dv
,\end{align*}
where we used the identity \eqref{lm:npartition.equiv}. Similarly,
\begin{align*}
    \langle \sW\mu(u), \phi \rangle = \E[W(u, U)\phi(X)] = \int_{[0,1]} W(u, v) \E[\phi(X)|U=v] dv
.\end{align*}
Thus,
\begin{align*}
    \E\left[ \left| \E[\langle \sW_{\xi^n}\mu^n(U), \phi \rangle|U] - \langle \sW\mu(U), \phi \rangle \right| \right]
    &= \int_{[0,1]} \left| \int_{[0,1]} (W_{\xi^n}(u, v)- W(u, v)) \E[\phi(X)|U=v] dv\right| du \\
    &= \left\Vert (\sW_{\xi^n}-\sW)\phi\right\Vert_{L^1[0,1]}
.\end{align*}
By the assumption that $W_{\xi^n}\to W$ in the strong operator topology, the right-hand side goes to 0 and thus $\E[\langle \sW_{\xi^n}\mu^n(U), \phi \rangle|U] \to \langle \sW\mu(U), \phi \rangle$ in $L^1$. This concludes the first step.

\textbf{Step ii}. We next show by induction the following holds for each $t\in\T$:
\begin{align} \label{lm:Xhat.to.Y}
    \frac{1}{n}\sum_{i=1}^n\E[h(U^n_i, \widehat{X}^{n,\textbf{U}^n,i}_t, \widehat{M}^{n,\textbf{U}^n,i}_t)] \to \frac{1}{n}\sum_{i=1}^n \E[h(U^n_i, Y^{n,i}_t, M^{n,i}_t)]
.\end{align}
This is trivially true at time $0$, since $\widehat{X}^{n,\textbf{U}^n,i}$ are initialized independently, we have $\L(U^n_i, \widehat{X}^{n,\textbf{U}^n,i}_0)=\L(U^n_i, Y^{n,i}_0)$, and thus
\begin{align*}
    \frac{1}{n}\sum_{i=1}^n\E[h(U^n_i, \widehat{X}^{n,\textbf{U}^n,i}_0, \widehat{M}^{n,\textbf{U}^n,i}_0)] = \frac{1}{n}\sum_{i=1}^n \E[h(U^n_i, Y^{n,i}_0, M^{n,i}_0)]
.\end{align*}
Now assume \eqref{lm:Xhat.to.Y} holds for time $t-1$. We have
\begin{align*} \nonumber
    & \frac{1}{n}\sum_{i=1}^n\left(\E[h(U^n_i, \widehat{X}^{n,\textbf{U}^n,i}_t, \widehat{M}^{n,\textbf{U}^n,i}_t)] - \E[h(U^n_i, Y^{n,i}_t, M^{n,i}_t)] \right) \\ \nonumber 
    &\le \underbrace{\frac{1}{n}\sum_{i=1}^n\big( \E[h(U^n_i, \widehat{X}^{n,\textbf{U}^n,i}_t, \widehat{M}^{n,\textbf{U}^n,i}_t)] -\E[h(U^n_i, \widehat{X}^{n,\textbf{U}^n,i}_t, M^{n,i}_t)] \big)}_{\mathrm{I}}    \\ 
    &\qquad +\underbrace{\frac{1}{n}\sum_{i=1}^n\big( \E[h(U^n_i, \widehat{X}^{n,\textbf{U}^n,i}_t, M^{n,i}_t)] - \E[h(U^n_i, Y^{n,i}_t, M^{n,i}_t)] \big)}_{\mathrm{II}} 
.\end{align*}
Denote $\F^n_{t}:=\sigma(\{U^n_i\}_{i=1}^n,\,\{\widehat{X}^{n,\textbf{U}^n,i}_s\}_{i=1}^n,\, \{Y^{n,i}_s\}_{i=1}^n,\, s\le t)$. For term $\mathrm{I}$, we note that $\widehat{X}^{n,\textbf{U}^n,i}_t$ and $\widehat{X}^{n,\textbf{U}^n,j}_t$ are independent conditional on $\F^n_{t-1}$, and 
\begin{align*}
    &\E[h(U^n_i, \widehat{X}^{n,\textbf{U}^n,i}_t, \widehat{M}^{n,\textbf{U}^n,i}_t) - h(u, \widehat{X}^{n,\textbf{U}^n,i}_t, M^{n,i}_t)\, |\, \F^n_{t-1}, \widehat{X}^{n,\textbf{U}^n,i}_t] \\
    &=\int_{(\R^d)^{n-1}} h\bigg(U^n_i, \widehat{X}^{n,\textbf{U}^n,i}_t, \frac{1}{n}\sum_{j=1}^n \xi^n_{ij} \delta_{x^j}\bigg) 
 \prod_{j\ne i}\widehat{P}^{n,\textbf{U}^n,j}_{t-1}(dx^j) \\
    &\qquad - \int_{(\R^d)^{n-1}}h\bigg(U^n_i, \widehat{X}^{n,\textbf{U}^n,i}_t, \frac{1}{n} \sum_{j=1}^n \xi^n_{ij} \delta_{y^j}\bigg) \prod_{j\ne i}P^{n,j}_{t-1}(dy^j)
,\end{align*}
where we use a shorthand notation:
\begin{align*}
    &\widehat{P}^{n,\textbf{U}^n,i}_{s}:=P_{s}(\widehat{X}^{n,\textbf{U}^n,i}_s, \widehat{M}^{n,\textbf{U}^n,i}_s,  \pi_s(U^n_i, \widehat{X}^{n,\textbf{U}^n,i}_{s})), \\
    &P^{n,i}_{s}:=P_s(Y^{n,i}_{s}, \sW \mu_s(U^n_i),  \pi_s(U^n_i, Y^{n,i}_{s}))
.\end{align*}
More specifically, define the function $h^{'}:[0,1]\times \R^d\times \M_{+}(\R^d)$ as follows:
\begin{align*}
    h^{'}(u,x, m):= \int_{(\R^d)^{n-1}} h\bigg(U^n_i, \widehat{X}^{n,\textbf{U}^n,i}_t, \frac{1}{n}\sum_{j=1}^n \xi^n_{ij} \delta_{x^j}\bigg)  \prod_{j\ne i}P_{t-1}(x, m,  \pi_{t-1}(u, x))(dx^j)
.\end{align*}
Then
\begin{align*}
    \mathrm{I} =&\frac{1}{n}\sum_{i=1}^n \bigg(h^{'}(U^n_i, \widehat{X}^{n,\textbf{U}^n,i}_{t-1}, \widehat{M}^{n,\textbf{U}^n,i}_{t-1}) - h^{'}(U^n_i, Y^{n,i}_{t-1}, \sW \mu_{t-1}(U^n_i)) \bigg)
.\end{align*}
Similarly for $\mathrm{II}$, note that $Y^{n,i}$ are independent; we have
\begin{align*}
    &\E[h(u, \widehat{X}^{n,\textbf{U}^n,i}_t, M^{n,i}_t) - h(U^n_i, Y^{n,i}_t, M^{n,i}_t) 
    \, |\, \F^n_{t-1}, Y^{n,j}_t,\, j\ne i] \\
    &=\int_{(\R^d)^{n-1}}  h(U^n_i, x^i, M^{n,i}_t) 
    \widehat{P}^{n,\textbf{U}^n,i}_{t-1}(dx^i) - h(U^n_i, y^i, M^{n,i}_t) P^{n,i}_{t-1}(dy^i)
.\end{align*}
By defining the function $h^{''}:[0,1]\times \R^d\times \M_{+}(\R^d)$ as follows:
\begin{align*}
h^{''}(u,x, m):= \int_{(\R^d)^{n-1}} h(u, x^i, M^{n,i}_t) 
    P_{t-1}(x, m,  \pi_{t-1}(u, x))(dx^i)
,\end{align*}
we get
\begin{align*}
    \mathrm{II} \le \frac{1}{n}\sum_{i=1}^n\bigg( &h^{''}(U^n_i, \widehat{X}^{n,\textbf{U}^n,i}_{t-1}, \widehat{M}^{n,\textbf{U}^n,i}_{t-1}) - h^{''}(U^n_i, Y^{n,i}_{t-1}, \sW \mu_{t-1}(U^n_i)) \bigg)
.\end{align*}
Note that by the assumption that $h$ and $P$ are continuous, $h^{'}(t,\cdot, \cdot)$ and $h^{''}(t,\cdot, \cdot)$ are jointly continuous for every $t\in\T$. Combining I and II, by the tower property,
\begin{align*}
    & \frac{1}{n}\sum_{i=1}^n\left(\E[h(U^n_i, \widehat{X}^{n,\textbf{U}^n,i}_t, \widehat{M}^{n,\textbf{U}^n,i}_t)] - \E[h(U^n_i, Y^{n,i}_t, M^{n,i}_t)] \right) \\
    &\le \mathrm{I} + \mathrm{II}\\
    &\le \underbrace{\frac{1}{n}\sum_{i=1}^n \E[ (h^{'}+h^{''})(U^n_i, \widehat{X}^{n,\textbf{U}^n,i}_{t-1}, \widehat{M}^{n,\textbf{U}^n,i}_{t-1}) ]
     - \frac{1}{n}\sum_{i=1}^n \E[ (h^{'}+h^{''})(U^n_i, Y^{n,i}_{t-1}, M^{n,i}_{t-1}) ]}_{\mathrm{I}'}  \\
     &\quad + \underbrace{\frac{1}{n}\sum_{i=1}^n \E[ (h^{'}+h^{''})(U^n_i, Y^{n,i}_{t-1}, M^{n,i}_{t-1}) ]
    - \frac{1}{n}\sum_{i=1}^n \E[ (h^{'}+h^{''})(U^n_i, Y^{n,i}_{t-1}, \sW \mu_{t-1}(U^n_i)) ]}_{\mathrm{II}''}
.\end{align*}
By our assumption on time $t-1$, $\mathrm{I'}\to 0$. In Step i we proved that $\sW_{\xi^n}\mu^n(U)\to \sW \mu(U)$. It is straightforward to show $\sW_{\xi^n}\mu^n(U^n_i)\to \sW \mu(U^n_i)$ with the same line of reasoning. Rewrite $W_{\xi^n}\mu^n(U^n_i)=M^{n,i}$, we actually have $M^{n,i}\to\sW \mu(U^n_i)$ in probability. Combined with the boundedness of integrand, the convergences is in $L^1$ and thus $ \mathrm{II'}\to 0$.

\textbf{Step iii}. Finally, we aim to show,     
\begin{align*}
    \frac{1}{n}\sum_{i=1}^n \E{h(U^n_i, Y^{n,i}, M^{n,i})} \to \E[h(U, X, \sW \mu(U))]
.\end{align*}
This is justified with similar argument as in \citep[Theorem 6.1]{Lacker2022ALF}, and this concludes the theorem.
\end{proof}

\subsection{Proof of \cref{theo:approx.eqbm}}
Recall the definition of $\bm{\epsilon}^n(\textbf{u}^n)$ in \cref{def:finite.player.game.eqbm}; we have
\begin{align*}
    \epsilon^n_i(\textbf{u}^n) &:= \sup_{\beta\in \A_n} J_i(\pi^{n,\textbf{u}^n,1}, \dots, \pi^{n,\textbf{u}^n,i-1}, \beta, \pi^{n,\textbf{u}^n,i+1},\dots, \pi^{n,\textbf{u}^n,n}) - J_i(\bm{\pi}^{n,\textbf{u}^n}) \\
    &\le \sup_{\beta\in \A_n} \Delta^{n,i}_1(\beta,\textbf{u}^n)
    + \sup_{\beta\in \A_n} \Delta^{n,i}_2(\beta,\textbf{u}^n)
    + \sup_{\beta\in \A_n} \Delta^{n,i}_3(\beta,\textbf{u}^n) 
    + \Delta^{n,i}_4(\textbf{u}^n)
    + \Delta^{n,i}_5(\textbf{u}^n)
,\end{align*}
where
\begin{align*} \allowdisplaybreaks
    \Delta^{n,i}_1(\beta,\textbf{u}^n) 
    &:= \E\left[ \sum_{t\in \T} f^i(t, \widehat{X}^{n,\textbf{u}^n,\beta,i}_t, \widehat{M}^{n,\textbf{u}^n,(\beta,i),i}_t, \beta_t) + g(\widehat{X}^{n,\textbf{u}^n,\beta,i}_T, \widehat{M}^{n,\textbf{u}^n,(\beta,i),i}_T) \right]\\
    &-\E\left[ \sum_{t\in \T} f^i(t, \widehat{X}^{n,\textbf{u}^n,\beta,i}_t, \sW \mu_t(u^n_i), \beta_t) + g(\widehat{X}^{n,\textbf{u}^n,\beta,i}_T, \sW \mu_T(u^n_i)) \right],\\
    \Delta^{n,i}_2(\beta,\textbf{u}^n) 
    &:= \E\left[ \sum_{t\in \T} f^i(t, \widehat{X}^{n,\textbf{u}^n,\beta,i}_t, \sW \mu_t(u^n_i), \beta_t) + g(\widehat{X}^{n,\textbf{u}^n,\beta,i}_T, \sW \mu_T(u^n_i)) \right]\\
    &-\E\left[ \sum_{t\in \T} f^i(t, X^{\beta, u^n_i}_t, \sW \mu_t(u^n_i), \beta_t) + g(X^{\beta, u^n_i}_T, \sW \mu_T(u^n_i)) \right],\\
    \Delta^{n,i}_3(\beta,\textbf{u}^n) &:= 
    \E\left[ \sum_{t\in \T} f^i(t, X^{\beta, u^n_i}_t, \sW \mu_t(u^n_i), \beta_t) + g(X^{\beta, u^n_i}_T, \sW \mu_T(u^n_i)) \right]\\
    &-\E\left[ \sum_{t\in \T} f^i(t, X^{u^n_i}, \sW \mu_t(u^n_i), \pi_t(u^n_i, \widehat{X}^{n,\textbf{u}^n,i}_{t})) + g(X^{u^n_i}, \sW \mu_T(u^n_i)) \right],\\
    \Delta^{n,i}_4(\textbf{u}^n)&:= \E\left[ \sum_{t\in \T} f^i(t, X^{u^n_i}, \sW \mu_t(u^n_i), \pi_t(u^n_i, \widehat{X}^{n,\textbf{u}^n,i}_{t})) + g(X^{u^n_i}, \sW \mu_T(u^n_i)) \right] \\
    &-\E\left[ \sum_{t\in \T} f^i(t, \widehat{X}^{n,\textbf{u}^n,i}_t, \sW \mu_t(u^n_i), \pi_t(u^n_i, \widehat{X}^{n,\textbf{u}^n,i}_{t})) + g(\widehat{X}^{n,\textbf{u}^n,i}_T, \sW \mu_T(u^n_i)) \right],\\
    \Delta^{n,i}_5(\textbf{u}^n) 
    &:= \E\left[ \sum_{t\in \T} f^i(t, \widehat{X}^{n,\textbf{u}^n,i}_t, \sW \mu_t(u^n_i), \pi_t(u^n_i, \widehat{X}^{n,\textbf{u}^n,i}_{t})) + g(\widehat{X}^{n,\textbf{u}^n,i}_T, \sW \mu_T(u^n_i)) \right] \\
    &-\E\left[ \sum_{t\in \T} f^i(t, \widehat{X}^{n,\textbf{u}^n,i}_t, \widehat{M}^{n,\textbf{u}^n,i}_t, \pi_t(u^n_i, \widehat{X}^{n,\textbf{u}^n,i}_{t})) + g(\widehat{X}^{n,\textbf{u}^n,i}_T, \widehat{M}^{n,\textbf{u}^n,i}_T) \right]
.\end{align*}
$\beta_t$ in these formulae is short for $\beta_t( \widehat{\textbf{X}}^{n,\textbf{u}^n,\beta,i}_t)$, for a closed-loop control $\beta:\T\times \R^d \to \P(A)$.

\begin{lemma}[{\citep[Lemma 5.1]{Lacker2022ALF}}]
\label{lm:optimality.restricted.to.u}
Fix $\mu\in\P_{\mathrm{unif}}([0,1]\times \C)$, $u\in[0,1]$. For any policy $\pi\in\A_1$ and $m\in\P(\R^d)$, define
\begin{align*}
    X^{m,\pi}_{t+1} \sim P(t,X^{m,\pi}_t, \sW \mu_t(u),  \pi(t,X^{m,\pi}_t)),
    \qquad X^{m,\pi}_0 \sim m
,\end{align*}
and 
\begin{align*}
    J^{u,m}_W(\mu,\pi):=\E\left[ \sum_{t\in \T} f(t,X^{m,\pi}_t, \sW \mu_t(u), \pi(t,X^{m,\pi}_t))+ g(X^{m,\pi}_T, \sW \mu_T(u)) \right]
.\end{align*}
If $\pi\in\A_U$ is an optimal policy, in the sense that $J_W(\mu,\pi)\ge J_W(\mu,\beta)$ for all $\beta\in\A_U$, then
\begin{align} \label{eq:implied.single.label.optimum}
J^{u,\lambda_u}_W(\mu,\pi_u) = \sup_{\beta\in\A_1} J^{u,\lambda_u}_W(\mu,\beta)
,\end{align}
where $\pi_u(t,x) := \pi(t,u,x)$.
\end{lemma}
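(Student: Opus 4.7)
The plan is to express the full objective as a disintegration over the label $U$ and then invoke a measurable selection argument. First, by conditioning on $U=u$ and using that $U\sim\mathrm{unif}[0,1]$ is $\F_0$-measurable and independent of the dynamics given $u$, one has the identity
\begin{align*}
J_W(\mu,\pi) \;=\; \int_{[0,1]} J^{u,\lambda_u}_W(\mu,\pi_u)\,du,
\end{align*}
where $\pi_u(t,x):=\pi(t,u,x)$. Indeed, conditional on $U=u$, the representative player's state dynamic is exactly that of the single-label MDP with initial law $\lambda_u$, neighborhood measure $\sW\mu_t(u)$, and Markovian control $\pi_u$, while the running and terminal rewards split accordingly. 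An analogous identity holds if we replace $\pi$ by any $\beta\in\A_U$ of the ``fibered'' form $\beta(t,u,x)=\beta^u(t,x)$ for a measurable family $u\mapsto\beta^u\in\A_1$.

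Second, since every such fibered $\beta$ lies in $\A_U$, optimality of $\pi$ gives
\begin{align*}
\int_{[0,1]} J^{u,\lambda_u}_W(\mu,\pi_u)\,du \;\ge\; \int_{[0,1]} J^{u,\lambda_u}_W(\mu,\beta^u)\,du
\end{align*}
for every measurable selection $u\mapsto\beta^u$. Define $V(u):=\sup_{\beta\in\A_1}J^{u,\lambda_u}_W(\mu,\beta)$. The goal is to show $J^{u,\lambda_u}_W(\mu,\pi_u)=V(u)$ for a.e.\ $u$. Assume toward contradiction that the set $B:=\{u:J^{u,\lambda_u}_W(\mu,\pi_u)<V(u)\}$ has positive Lebesgue measure. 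Restricting to a subset of $B$ on which the gap exceeds some $\varepsilon>0$ (still of positive measure), I would apply a measurable selection theorem (Kuratowski--Ryll-Nardzewski or Berge--Aumann type) to pick a measurable $u\mapsto\beta^u\in\A_1$ such that $J^{u,\lambda_u}_W(\mu,\beta^u)>J^{u,\lambda_u}_W(\mu,\pi_u)+\varepsilon/2$ on the subset, and $\beta^u=\pi_u$ elsewhere. Integrating would yield $J_W(\mu,\beta)>J_W(\mu,\pi)$, contradicting optimality.

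The main obstacle is the measurable selection step: one must verify that the value map $u\mapsto V(u)$ is measurable and that an $\varepsilon$-optimal selection exists with measurable dependence on $u$. This rests on (i) measurability of $u\mapsto \sW\mu_t(u)$, which holds by construction of the graphon operator, (ii) Lebesgue-measurability of $u\mapsto \lambda_u$ from the disintegration of $\lambda$, and (iii) joint continuity of $f_t,g,P_t$ in $(x,m,a)$ from \cref{assump:existence}, which implies the parametric objective $(u,\beta)\mapsto J^{u,\lambda_u}_W(\mu,\beta)$ is a Carath\'eodory integrand in $u$ with upper semicontinuous behavior in $\beta$ on a suitably compactified policy space (as used in \cref{section:compatification.and.fixedpoint}). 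Compactness of admissible laws from the proof of \cref{prop:existence.of.optimizer} ensures the supremum is attained on each fiber, which further simplifies the selection; once a measurable $\varepsilon$-maximizer is constructed, the contradiction argument closes the proof.
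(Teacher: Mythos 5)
The paper does not actually prove this lemma: it is imported verbatim as \citep[Lemma 5.1]{Lacker2022ALF}, and the only in-paper commentary is \cref{remark:implied.single.label.optimum}, which recasts the conclusion in the relaxed-control language of \cref{section:compatification.and.fixedpoint} and defers the proof to the reference. Your proposal — disintegrate $J_W(\mu,\pi)=\int_0^1 J^{u,\lambda_u}_W(\mu,\pi_u)\,du$ by conditioning on $U$, then argue by contradiction via a measurable $\varepsilon$-improving selection on a positive-measure set of labels — is precisely the argument used in the cited source (adapted to discrete time), so in substance you have reconstructed the intended proof. Two points deserve care. First, the conclusion can only hold for Lebesgue-a.e.\ $u$ (modifying $\pi$ on a null set of labels does not change $J_W$), which is how the paper actually uses the lemma in \cref{section:proof.of.approx.eqbm}; your argument correctly delivers the a.e.\ version. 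Second, the measurable selection must produce a policy that genuinely lies in $\A_U$, i.e.\ jointly measurable in $(t,u,x)$; the clean discrete-time route is backward induction on the fiberwise Bellman equation with a measurable-maximum theorem at each time step (the value and an optimal kernel are then automatically jointly measurable), whereas selecting in the compactified space $\RR_{u,\lambda_u}(\mu)$ requires an additional fiberwise closed-loop reduction (the Markovian-projection proposition of \cref{section:proof.of.existence}) together with a check that the resulting closed-loop policies depend measurably on $u$. You flag this obstacle but leave it at the level of a plan; filling it in via backward induction would make the argument self-contained and arguably simpler than the continuous-time proof being cited.
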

\begin{remark} \label{remark:implied.single.label.optimum}
With a similar notation as in the proof for equilibrium existence (\cref{section:compatification.and.fixedpoint}), we may denote
\begin{align*}
    \RR_{u,\lambda_{u}}(\mu):=\{R\in\RR(\mu)\subset\P(\V\times[0,1]\times\C): R\circ U^{-1}=\delta_{u^n_i},\, R\circ X_0^{-1}=\lambda_u  \}
.\end{align*}
The joint law $\L(\pi, u, X^{\lambda_u,\pi})\in \RR_{u,\lambda_{u}}(\mu)$, and for any $\beta\in\V_U$, $\L(\beta, u, X^{\lambda_u,\beta})\in\RR_{u,\lambda_{u}}(\mu)$; note that $\beta$ can be any open-loop policy. Thus, \eqref{eq:implied.single.label.optimum} can be rewritten as%
\footnote{Indeed, it might not be directly obvious why $\langle \L(\pi, u, X^{\lambda_u,\pi}), \Xi^{\mu} \rangle \ge \langle R, \Xi^{\mu} \rangle$ holds for all $R\in\RR_{u,\lambda_{u}}(\mu)$, since $R$ might induce open-loop policies while the supremum in \eqref{eq:implied.single.label.optimum} is over $\A_1$. This actually can be showed rigorously, however, and the reader may refer to \citep[Lemma 5.1]{Lacker2022ALF} for a proof.}
\begin{align*}
    \langle \L(\pi, u, X^{\lambda_u,\pi}), \Xi^{\mu} \rangle \ge \langle R, \Xi^{\mu} \rangle, \qquad \forall R\in\RR_{u,\lambda_{u}}(\mu)
,\end{align*}
where $\Xi^{\mu}$ is defined in \eqref{eq:gamma}. This view simplifies the analysis of the following lemma.
\end{remark}

\begin{lemma}
    $\sup_{\beta\in \A_n} \Delta^{n,i}_3(\beta,\textbf{u}^n) \le 0 $ for a.e. $\textbf{u}^n\in [0,1]^n$ and all $i\in[n]$.
\end{lemma}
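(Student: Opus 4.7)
The approach is to recognize $\Delta^{n,i}_3$ as the gap between playing an alternative policy $\beta$ and playing the graphon equilibrium policy $\pi$ in a \emph{single-player} graphon control problem, with label fixed at $u^n_i$, initial distribution $\lambda_{u^n_i}$, and population measure fixed at the equilibrium $\mu$. By the optimality of $\pi$ in the graphon game, the label-wise optimality given by Lemma~\ref{lm:optimality.restricted.to.u} forces this gap to be non-positive for almost every $u^n_i\in[0,1]$.

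First I would identify the first term of $\Delta^{n,i}_3(\beta,\textbf{u}^n)$ as $J^{u^n_i,\lambda_{u^n_i}}_W(\mu,\beta)$ in the notation of Lemma~\ref{lm:optimality.restricted.to.u}. Indeed, the dynamic $X^{\beta,u^n_i}$ is exactly the single-representative graphon dynamic conditional on $U=u^n_i$, started from $\lambda_{u^n_i}$, interacting with population $\mu$, and driven by the action process $\beta_t$. Although $\beta\in\A_n$ is formally closed-loop in the $n$-player state vector, only the marginal action process matters for this single-player dynamic, so the joint law $\L(\beta,u^n_i,X^{\beta,u^n_i})$ lives in $\RR_{u^n_i,\lambda_{u^n_i}}(\mu)$ in the sense of Remark~\ref{remark:implied.single.label.optimum}.

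Next I would identify the second term as $J^{u^n_i,\lambda_{u^n_i}}_W(\mu,\pi_{u^n_i})$, noting that under the equilibrium $X^{u^n_i}\stackrel{d}{=}X^{\pi,u^n_i}$ started from $\lambda_{u^n_i}$ and driven by the equilibrium policy restricted to label $u^n_i$. Then Lemma~\ref{lm:optimality.restricted.to.u} applied at $u=u^n_i$, combined with its extension in Remark~\ref{remark:implied.single.label.optimum} to arbitrary admissible (possibly open-loop) laws in $\RR_{u^n_i,\lambda_{u^n_i}}(\mu)$, yields
\[
J^{u^n_i,\lambda_{u^n_i}}_W(\mu,\pi_{u^n_i})\ \ge\ J^{u^n_i,\lambda_{u^n_i}}_W(\mu,\beta)
\]
for Lebesgue-almost every $u^n_i\in[0,1]$. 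Taking the supremum over $\beta\in\A_n$ on the right-hand side preserves the inequality, and a union bound over $i\in[n]$ over null sets gives the statement for almost every $\textbf{u}^n\in[0,1]^n$.

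The main obstacle is the subtle passage from closed-loop $n$-player policies to admissible single-player laws: a generic $\beta\in\A_n$ depends on the entire $n$-player state vector and therefore cannot be written as a closed-loop function of just $(t,X^{\beta,u^n_i}_t)$, so the closed-loop version of Lemma~\ref{lm:optimality.restricted.to.u} alone does not cover it. This is exactly the reason the lemma was extended in Remark~\ref{remark:implied.single.label.optimum} to all $R\in\RR_{u^n_i,\lambda_{u^n_i}}(\mu)$: one must verify that the joint law induced by running the single-player graphon dynamic with action process $\beta_t$ fits into $\RR_{u^n_i,\lambda_{u^n_i}}(\mu)$, which is immediate from the construction in Section~\ref{section:compatification.and.fixedpoint} and the fact that $\beta_t$ is an adapted $\P(A)$-valued process.
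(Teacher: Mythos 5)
Your proposal is correct and follows essentially the same route as the paper: identify the two terms of $\Delta^{n,i}_3$ as $J^{u^n_i,\lambda_{u^n_i}}_W(\mu,\beta)$ and $J^{u^n_i,\lambda_{u^n_i}}_W(\mu,\pi_{u^n_i})$, check that $\L(\beta,u^n_i,X^{\beta,u^n_i})\in\RR_{u^n_i,\lambda_{u^n_i}}(\mu)$, and invoke \cref{lm:optimality.restricted.to.u} via \cref{remark:implied.single.label.optimum}. You also correctly flag the one genuine subtlety — that $\beta\in\A_n$ is not closed-loop from the single player's viewpoint, which is exactly why the extension to arbitrary admissible laws in $\RR_{u,\lambda_u}(\mu)$ is needed — so nothing is missing.
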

\begin{proof}
By construction, $\L(X^{u^n_i})=\L(X^{\lambda_{u^n_i}, \pi_{u^n_i}})$, then the second term of $\Delta^{n,i}_3(\beta,\textbf{u}^n)$ is actually $J^{u^n_i, \lambda_{u^n_i}}_W(\mu,\pi_u)$. On the other hand, the joint law $\L(\beta, u^n_i, X^{\beta, u^n_i})\in \RR_{u^n_i,\lambda_{u^n_i}}(\mu)$. Thus, by \cref{remark:implied.single.label.optimum}, we deduce
\begin{align*}
    \sup_{\beta\in\A_n}\Delta^{n,i}_3(\beta,\textbf{u}^n) &\le 
    \sup_{\beta\in\A_1} J^{u^n_i, \lambda_{u^n_i}}_W(\mu,\pi^*_u) -J^{u^n_i, \lambda_{u^n_i}}_W(\mu,\pi^*_u)
.\end{align*}
Following \eqref{eq:implied.single.label.optimum} in \cref{lm:optimality.restricted.to.u}, the right-hand side is $\le 0$ for a.e. $\textbf{u}^n\in [0,1]^n$ and all $i\in[n]$.
\end{proof}

Taking average, we have
\begin{align} \label{eq:avg.epsilon}
    \frac{1}{n}\sum_{i=1}^n \epsilon^n_i(\textbf{u}^n) &
    \le \frac{1}{n}\sum_{i=1}^n\sup_{\beta\in \A_n} \Delta^{n,i}_1(\beta,\textbf{u}^n)
    + \frac{1}{n}\sum_{i=1}^n\sup_{\beta\in \A_n} \Delta^{n,i}_2(\beta,\textbf{u}^n)
    + \frac{1}{n}\sum_{i=1}^n\Delta^{n,i}_4(\textbf{u}^n)
    + \frac{1}{n}\sum_{i=1}^n\Delta^{n,i}_5(\textbf{u}^n)
.\end{align}
By \cref{assump:existence}, it's straightforward to see that $\{\L(\widehat{X}^{n,\textbf{u}^n,\beta,i}): (n,\textbf{u}^n,\beta,i)\in\N_{+}\times \mathbf{I}^n\times\V\times [n]\}$ is a tight collection of measures in $\P(\C)$. Let $K\subset\C$ be a compact subset such that $\sup_{n,\textbf{u}^n,\beta,i} \PP(\widehat{X}^{n,\textbf{u}^n,\beta,i}\notin K) \le \eta$ for some fixed $\eta>0$. Define function $h_1:[0,1]\times\M_+(\C)\to\R$ by
\begin{align*}
    h_1(u,m) := \sum_{t\in\T} \sup_{a\in A} \sup_{z\in K} &\Big(\left| f(t,z_t,\sW \mu_t(u), a) - f(t,z_t,m_t, a) \right| + \left| g(z_T,\sW \mu_T(u)) - g(z_T,m_T) \right|\Big)
.\end{align*}
Similarly, define
\begin{align*}
    h_2(u, x)&:= \sum_{t\in \T} \sup_{a\in A}\sup_{z\in K} \Big( \left| \E f^i(t, z_t, \sW \mu_t(u), a) - f^i(t, x_t, \sW \mu_t(u), a) \right| - \left| \E g(z_T, \sW \mu_T(u)) - g(x_T, \sW \mu_T(u)) \right| \Big)
.\end{align*}
Function $h_1$ and $h_2$ are bounded measurable since $f$ and $g$ are bounded continuous \citep[Theorem 18.19]{InfDimAnalysis}. Moreover, it follows from the compactness of $A$ and $K$ that $h_1(u,\cdot)$ is continuous on $\M_+(\C)$ for a.e. $u$, and $h_2(u,\cdot)$ is continuous on $E$ for a.e. $u$. Note that $(h_1,h_2)(U,X_t,\sW\mu(U))=0$. We may thus use $h_1$ to bound $\Delta_1$ and $\Delta_5$, use $h_2$ to bound $\Delta_2$ and $\Delta_4$.
To address the region outside $K$, let $C$ be a constant such that $\max(|f|, |g|)\le C$, and \eqref{eq:avg.epsilon} becomes
\begin{align*}
    \frac{1}{n}\sum_{i=1}^n \epsilon^n_i(\textbf{u}^n) &\le
    \frac{2}{n}\sum_{i=1}^n \E\left[ h_1(u^n_i, \widehat{M}^{n,\textbf{u}^n,i}_t) \right] + \frac{2}{n}\sum_{i=1}^n \E\left[ h_2(u^n_i, \widehat{X}^{n,\textbf{u}^n,i}_t) \right] + 8\eta C
.\end{align*}
By \cref{theorem:nplayer.converges.to.graphon},
\begin{align*}
    \E\left[ \frac{1}{n}\sum_{i=1}^n \epsilon^n_i(\textbf{U}^n) \right] &\le
    \frac{2}{n}\sum_{i=1}^n \E\left[ h_1(U^n_i, \widehat{M}^{n,\textbf{U}^n,i}_t) \right] + \frac{2}{n}\sum_{i=1}^n \E\left[ h_2(U^n_i, \widehat{X}^{n,\textbf{U}^n,i}_t) \right] + 8\eta C \\
    &\longrightarrow 2\E\left[ h_1(U, \sW\mu_t(U)) \right] + 2\E\left[ h_2(U, X_t) \right] + 8\eta C\\
    & = 8\eta C
.\end{align*}
The proof for theorem \ref{theo:approx.eqbm} is concluded by letting $\eta\to 0$.

\section{Proof for Online Learning Sample Complexity}
\label{section:proof.of.learning.sample.complexity}
\subsection{A Concrete Algorithm Realization}
\label{subappendix:algo}
For clarity, we present \cref{alg:approx-fpi} cobined with subroutine \eqref{alg:online} in \cref{alg:apx-online}.

\begin{algorithm}[H]
	\caption{Oracle-free Learning for GMFG}\label{alg:apx-online} 
    \begin{algorithmic} 
        \STATE Initialize $Q^{0,0}=\{Q^{0,0}_{d}\}_{d=1}^{D}$ and $M^{0,0}=\{M^{0,0}_{d}\}_{d=1}^{D}$
        \FOR{$k \leftarrow 0$ to $K-1$}
        \FOR{$d \leftarrow 1$ to $D$}
        \STATE Sample initial state $x_0 \sim M^{k,0}_{d}$, action $a_0 \sim \Gamma_{\pi}(Q^{k,0}_{d})$
        \FOR{$\tau \leftarrow 0$ to $H-1$}
        \STATE Sample reward $r_{\tau} = f(x_{\tau},\sW \bm{\Pi}_{D} M^{k,0}(u_d), a_{\tau})$, next state $x_{\tau+1} \sim P(x_{\tau}, \sW \bm{\Pi}_{D} M^{k,0}(u_d), a_{\tau})$, and action $a_{\tau+1}\sim \Gamma_{\pi}(Q^{k,\tau}_{d})$
        \STATE Update Q-function: \\$Q^{k,\tau+1}_{d}(x_{\tau},a_{\tau}) \leftarrow (1-\alpha_{\tau}) Q^{k,\tau}_{d}(x_{\tau},a_{\tau}) + \alpha_{\tau} \left(r_{\tau} + \gamma Q^{k,\tau}_{d}(x_{\tau+1},a_{\tau+1})  \right)$
        \STATE Update population measure: \\$M^{k,\tau+1}_{d} \leftarrow (1-\beta_{\tau})M^{k,\tau}_{d} + \beta_{\tau} \delta_{x_{\tau+1}}$
    \ENDFOR
    \STATE Let $Q^{k+1,0}_{d} = Q^{k,H}_{d}$ and $M^{k+1,0}_{d} = M^{k,H}_{d}$
    \ENDFOR
    \ENDFOR
    \STATE Return policy $\pi^{(K)}:=\Gamma_{\pi}(\bm{\Pi}_D Q^{K,0})$ and population measure $\mu^{(K)}:=\bm{\Pi}_D M^{K,0}$, where $Q^{K,0}=\{Q^{K,0}_{d}\}_{d=1}^{D}$ and $M^{K,0}=\{M^{K,0}_{d}\}_{d=1}^{D}$
    \end{algorithmic}
\end{algorithm}

\cref{alg:apx-online_finte} is adapted from \cref{alg:apx-online} to solve GMFGs with finite time horizons. 

\begin{algorithm}[H]
	\caption{Oracle-free Learning for Finite Horizon GMFG}\label{alg:apx-online_finte} 
    \begin{algorithmic} 
        \STATE Initialize $Q^{0,0:T}=\{Q^{0,0:T}_{d}\}_{d=1}^{D}$ and $M^{0,0:T}=\{M^{0,0:T}_{d}\}_{d=1}^{D}$ where $T$ is the time horizon
        \FOR{$k \leftarrow 0$ to $K-1$}
        \FOR{$d \leftarrow 1$ to $D$}
        \STATE Sample initial state $x_0 \sim M^{k,0}_{d}$ and action $a_0 \sim \Gamma_{\pi}(Q_{d}^{k,0})$
        \FOR{$t \leftarrow 0$ to $T-1$}
        \STATE Sample reward $r_{t} = f(x_{t},\sW \bm{\Pi}_{D} M^{k,t}(u_d), a_{t})$, next state $x_{t+1} \sim P(x_{t}, \sW \bm{\Pi}_{D} M^{k,t}(u_d), a_{t})$, and action $a_{t+1}\sim\Gamma_{\pi}(Q_{d}^{k,t+1})$
        \STATE Update population measure: \\$M^{k,t+1}_{d} \leftarrow (1-\beta_{k})M^{k,t}_{d} + \beta_{k} \delta_{x_{t+1}}$
        \STATE Update Q-function: \\$Q^{k,t}_{d}(x_{t},a_{t}) \leftarrow (1-\alpha_k) Q^{k,t}_{d}(x_{t},a_{t}) + \alpha_{k} \left(r_{t} + \gamma Q^{k,t+1}_{d}(x_{t+1},a_{t+1})  \right)$
    \ENDFOR
    \ENDFOR
    \ENDFOR
    \STATE Return policy $\pi^{(K)}:=\Gamma_{\pi}(\bm{\Pi}_D Q^{K,0:T})$ and population measure $\mu^{(K)}:=\bm{\Pi}_D M^{K,0:T}$, where $Q^{K,0:T}=\{Q^{K,0:T}_{d}\}_{d=1}^{D}$ and $M^{K,0:T}=\{M^{K,0:T}_{d}\}_{d=1}^{D}$\end{algorithmic}
\end{algorithm}
The difference between two algorithms lies in the learning rate. In \cref{alg:apx-online_finte}, the learning rate has to capture each time step $t$ in the time horizon $T$. Therefore, we have $\beta_k=\frac{1}{1+\#(t,k)}$ and $\alpha_k=\frac{1}{1+\#(x,a,t,k)}$, where $\#(t,k)$ counts the number of visits to time step $t$ up to epoch $k$. $\#(x,a,t,k)$ counts the number of visits to tuple $(x,a,t)$ up to epoch $k$.

\subsection{Discretization of Label Space}
Recall that for the sample complexity analysis, we consider a finite state space $\X$ and action space $A$.
Also recall that $\mathcal{U} \coloneqq \{u_1,\ldots, u_{D}\} $ is the discretization of label space, and  $\Pi_{D}: [0,1] \to \mathcal{U}$ is the projection mapping. For notation consistency, we use a tilde above any function (operator, measure, set, etc.) defined on $[0,1]$ to denote their counterparts defined on $\mathcal{U}$, and a hat over an operator to denote the algorithmic approximation of it.
We define the operator $\bm{\Pi}_D$ which maps operators defined on $\mathcal{U}$ to operators defined on $[0,1]$: for any operator $\widetilde{\phi}$ defined on $\mathcal{U}$, 
\begin{align*}
  \bm{\Pi}_{D}\widetilde{\phi}(u) \coloneqq \sum_{d=1}^{D} \widetilde{\phi}(u_d) 1_{\{u\in I_{u_d}\}}
.\end{align*}
In particular, for $\widetilde{\mu}=\{\widetilde{\mu}^{u_{d}}\}_{d=1}^D\in\M(\X)^{\mathcal{U}}$, we regard $\bm{\Pi}_{D}\widetilde{\mu}$ as both the kernel $\nu:[0,1]\to\M(\X)$ given by
\begin{align*}
  \nu(u) \coloneqq \sum_{d=1}^{D} \widetilde{\mu}^{u_{d}} 1_{\{u\in I_{u_d}\}}
,\end{align*}
and also a measure in $\M_{\mathrm{unif}}([0,1]\times\X)$, constructed by $\mathrm{Leb}\otimes \nu$. Here we denote $\M(\X)$ the collection of all Borel measures with finite variation on $\X$, and $\M_{\mathrm{unif}}([0,1]\times\X)$ the collection of all Borel measures with finite variation on $[0,1]\times\X$ with uniform first marginal.

In addition to $\Pi_{D}$, we define a set value mapping $\Pi_{D}^\dagger: \mathcal{U} \to 2^{[0,1]}$ by $\Pi_{D}^\dagger(u_{d}) = I_{d}$ for any $u_d\in \mathcal{U}$.
The operator $\bm{\Pi}_{D}^\dagger$ maps operators defined on $[0,1]$ to operators defined on $\mathcal{U}$. For any operator $\phi$ defined on $[0,1]$, 
\begin{align*}
  \bm{\Pi}^{\dagger}_{D}\phi(u_d) \coloneqq \phi(u_d), \qquad u_d\in\mathcal{U}
.\end{align*}
Note that $\bm{\Pi}_{D}^\dagger \bm{\Pi}_{D} = \mathrm{Id}_{\mathcal{U}}$, while the inverse is not necessarily true.

\begin{lemma}\label{prop:pid}
The operator norm of $\bm{\Pi}_{D}:\M(\X)^{\mathcal{U}}\to\M_{\mathrm{unif}}([0,1]\times\X)$ is bounded by 1, where we equip the product space $\M(\X)^{\mathcal{U}}$ with the norm $\Vert\widetilde{\mu}\Vert=\sup_{u_d\in\mathcal{U}}\Vert \widetilde{\mu}^{u_d} \Vert_{\mathrm{TV}}$.
\end{lemma}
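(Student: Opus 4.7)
The plan is to establish the bound by a direct computation of the total variation of $\bm{\Pi}_D\widetilde{\mu}$ via its disintegration. Fix an arbitrary $\widetilde{\mu} = \{\widetilde{\mu}^{u_d}\}_{d=1}^D \in \M(\X)^{\mathcal{U}}$. By construction, $\bm{\Pi}_D\widetilde{\mu}$ is the measure $\mathrm{Leb}\otimes \nu$ on $[0,1]\times\X$, where $\nu(u) = \sum_{d=1}^D \widetilde{\mu}^{u_d} 1_{\{u\in I_{u_d}\}}$. Since the kernel $\nu$ is piecewise constant and the sets $\{I_{u_d}\}_{d=1}^D$ form a partition of $[0,1]$, this disintegration is well-defined as a signed measure on $[0,1]\times\X$ with uniform first marginal.

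Next, I would compute the total variation explicitly. For the product-type measure $\mathrm{Leb}\otimes\nu$ with piecewise constant kernel, the total variation measure disintegrates as $\mathrm{Leb}\otimes |\nu|$, where $|\nu|(u) = \sum_d |\widetilde{\mu}^{u_d}| 1_{\{u \in I_{u_d}\}}$ is the kernel of total variation measures. Therefore
\[
    \|\bm{\Pi}_D\widetilde{\mu}\|_{\mathrm{TV}} = \int_{[0,1]} |\nu(u)|(\X)\, du = \sum_{d=1}^D \int_{I_{u_d}} |\widetilde{\mu}^{u_d}|(\X)\, du = \sum_{d=1}^D |I_{u_d}| \cdot \|\widetilde{\mu}^{u_d}\|_{\mathrm{TV}}.
\]
Since $\sum_{d=1}^D |I_{u_d}| = 1$ (the $I_{u_d}$ partition $[0,1]$), the last expression is a convex combination and hence bounded above by $\sup_{d} \|\widetilde{\mu}^{u_d}\|_{\mathrm{TV}} = \|\widetilde{\mu}\|$, yielding $\|\bm{\Pi}_D\widetilde{\mu}\|_{\mathrm{TV}} \le \|\widetilde{\mu}\|$ and therefore the operator norm bound.

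The only step that requires a bit of care is the justification that $|\mathrm{Leb}\otimes\nu| = \mathrm{Leb}\otimes|\nu|$ for our piecewise-constant kernel; this can be verified directly via a Hahn-type decomposition on each block $I_{u_d}\times\X$ using a Hahn decomposition of the individual signed measures $\widetilde{\mu}^{u_d}$, which is routine. I do not expect any real obstacle since the finite discretization makes the kernel trivially measurable and reduces the total variation computation to a finite sum.
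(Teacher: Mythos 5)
Your proof is correct and follows essentially the same route as the paper: both decompose $\bm{\Pi}_D\widetilde{\mu}$ over the partition blocks $\{I_{u_d}\}$ and conclude via the convex combination $\sum_d |I_{u_d}| = 1$. The only (immaterial) difference is that you evaluate the total variation exactly through the Hahn decomposition of each $\widetilde{\mu}^{u_d}$, whereas the paper bounds it from above using the dual characterization $\|\cdot\|_{\mathrm{TV}} = \sup_{\|\phi\|_\infty\le 1}|\langle\cdot,\phi\rangle|$; your identity $|\mathrm{Leb}\otimes\nu| = \mathrm{Leb}\otimes|\nu|$ does hold for the piecewise-constant kernel by the mutual singularity of $\mathrm{Leb}\otimes(\widetilde{\mu}^{u_d})^+$ and $\mathrm{Leb}\otimes(\widetilde{\mu}^{u_d})^-$ on each block, as you indicate.
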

\begin{proof}
It holds for any $\widetilde{\mu}\in\M(\X)^{\mathcal{U}}$ that
\begin{align*}
  \Vert\bm{\Pi}_{D}\widetilde{\mu} \Vert_{\mathrm{TV}} 
  &=  \sup_{\Vert\phi\Vert_{\infty}\le 1}\Big| \int_{[0,1]\times\X} \phi(u,x) \bm{\Pi}_{D}\widetilde{\mu}(du,dx)\Big| \\
  &\le \sum_{d=1}^D \sup_{\Vert\phi\Vert_{\infty}\le 1} 
 \int_{I_{u_d}} \Big|\int_{\X}\phi(u,x) \widetilde{\mu}^{u_d}(dx)\Big|du \\
 &\le  \sum_{d=1}^D  
 \int_{I_{u_d}} \Vert\widetilde{\mu}^{u_d}\Vert_{\mathrm{TV}} du
 \le \sup_{u_d\in\mathcal{U}}\Vert \widetilde{\mu}^{u_d} \Vert_{\mathrm{TV}} = \Vert\widetilde{\mu}\Vert
.\end{align*}
\end{proof}

The following lemma ensures that $\mathcal{U}$ is a good approximation of the label space.
\begin{lemma}\label{prop:W}
	Suppose \cref{assump:general.algo}(\ref{assump:graphon.continuity}) holds.
  For any $\mu \in \P_{\mathrm{unif}}([0,1] \times \X)$, we have
  \begin{align*}
  \sup_{u\in [0.1]}\left\| \sW \mu(u) - \sW \mu(\Pi_{D}(u)) \right\|_{\mathrm{TV}} \le \frac{L_{d}}{D}
  .\end{align*}
\end{lemma}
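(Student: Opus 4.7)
The plan is to unwind the definition of the graphon operator and bound the resulting signed measure by a direct estimate in total variation norm. Since $\sW\mu(u)$ is defined by $\int_{[0,1]\times\X} W(u,v)\delta_x\,\mu(dv,dx)$, the difference $\sW\mu(u)-\sW\mu(\Pi_D(u))$ is itself a signed measure on $\X$ whose density with respect to $\mu$ in the $(v,x)$ variables is $W(u,v)-W(\Pi_D(u),v)$ in the $v$-variable.

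First I would write the total variation norm in its dual form: $\|\eta\|_{\mathrm{TV}} = \sup_{\|\phi\|_\infty\le 1}|\langle \eta,\phi\rangle|$, where the supremum is over measurable $\phi:\X\to\R$. Applied to $\eta = \sW\mu(u)-\sW\mu(\Pi_D(u))$, for any such $\phi$,
\begin{equation*}
\langle \eta,\phi\rangle = \int_{[0,1]\times\X}\bigl(W(u,v)-W(\Pi_D(u),v)\bigr)\phi(x)\,\mu(dv,dx).
\end{equation*}
Then I would bound the absolute value pointwise inside the integral by $|W(u,v)-W(\Pi_D(u),v)|\cdot\|\phi\|_\infty$, apply \cref{assump:general.algo}(\ref{assump:graphon.continuity}) to replace that pointwise difference by $L_d/D$ uniformly in $v$, and finally use that $\mu\in\P_{\mathrm{unif}}([0,1]\times\X)$ is a probability measure so the remaining integral is at most $1$.

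Combining these steps yields $|\langle \eta,\phi\rangle|\le L_d/D$ for every test function $\phi$ with $\|\phi\|_\infty\le 1$, hence $\|\eta\|_{\mathrm{TV}}\le L_d/D$. Since the bound is uniform in $u\in[0,1]$, taking the supremum over $u$ on the left-hand side completes the proof. There is no real obstacle here: the only subtlety worth noting is that $\sW\mu(u)$ is a finite positive measure (not a probability measure), so one should work with the signed-measure total variation rather than, e.g., a metric on $\P(\X)$; but the dual characterization above handles this cleanly.
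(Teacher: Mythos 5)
Your argument is correct and is essentially the same as the paper's: both pass to the dual (test-function) characterization of the total variation norm, pull the difference $W(u,v)-W(\Pi_D(u),v)$ out of the integral against $\mu$, and invoke \cref{assump:general.algo}(\ref{assump:graphon.continuity}) together with the fact that $\mu$ is a probability measure to obtain the uniform bound $L_d/D$. No meaningful difference in route or generality.
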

\begin{proof}
    Recall the definition of total variation norm,
  \begin{align*}
    \sup_{u\in [0,1]} \left\| \sW \mu(u) - \sW \mu(\Pi_{D}(u)) \right\| _{\mathrm{TV}}
    =& \sup_{u} \sup_{\Vert\phi\Vert_{\infty\le 1}} \left|\int_{[0,1]\times\X} (W(u,v) - W(\Pi_{D}(u),v)) \phi(x) \mu(dv, dx) \right|\\
    \le& \sup_{u}\int_{[0,1]} \Big|(W(u,v) - W(\Pi_{D}(u),v)) \Big| dv \\
    \le& \frac{L_{d}}{D}
  ,\end{align*}
\end{proof}
where the last inequality follows from \cref{assump:general.algo}(\ref{assump:graphon.continuity}).


\subsection{Best Response and Induced Population Operator}



Recall $\mathcal{Q}$ is the collection of all $[0,1]\times\X\times A\to\R$ functions, for any $\mu\in\P_{\mathrm{unif}}([0,1]\times\X)$, the Bellman (optimality) operator $\mathcal{T}_{\mu}:\mathcal{Q}\to \mathcal{Q}$ is defined by 
\begin{align*}
\mathcal{T}_{\mu} q(u,x,a) = f(x,\sW\mu(u), a) +\gamma \langle P(x,\sW\mu(u), a), \sup_{a\in A}q(u,\cdot,a)\rangle
,\end{align*}
for any $q\in \mathcal{Q}$. It is known that $\mathcal{T}_{\mu}$ is a $\gamma$-contraction mapping, thus a unique fixed point exists, denoted as $Q^{\mu}$. The state value function is $v^{\mu}(u,x):=\sup_{a\in A}Q^{\mu}(u,x,a)$.

\paragraph{BR and IP operator.} The FPI $\Gamma$, given by $\Gamma(\mu)=\Gamma_2(\Gamma_1(\mu), \mu)$, can be alternatively decomposed into the \emph{best response} (BR) w.r.t. the current population and the \emph{induced population} (IP) w.r.t. the current policy. Define the BR operator $\Gamma_{\mathrm{BR}}: \P_{\mathrm{unif}}([0,1]\times \X) \to \mathcal{Q}$ by $\Gamma_{\mathrm{BR}}(\mu) = Q^{\mu}$
where $Q^{\mu}$ is the fixed point of $\mathcal{T}_{\mu}$.

The IP operator $\Gamma_{\mathrm{IP}}: \mathcal{Q} \times \P_{\mathrm{unif}}([0,1]\times \X)\to \P_{\mathrm{unif}}([0,1]\times \X)$ is defined by $\Gamma_{\mathrm{IP}}(Q,\mu) = \L(U,X)$ where $X$ follows the Markov transition with population measure $\mu$, and under policy $\Gamma_{\pi}(Q)$.

Actually, $\Gamma_{\pi}\circ\Gamma_{\mathrm{BR}}(\mu) = \Gamma_1(\mu)$, and $\Gamma_{\mathrm{IP}}(Q,\mu)=\Gamma_2(\Gamma_{\pi}(Q),\mu)$, and we have $\Gamma(\mu)=\Gamma_2(\Gamma_1(\mu), \mu)=\Gamma_{\mathrm{IP}}(\Gamma_{\mathrm{BR}}(\mu), \mu)$. However, both $\Gamma_{\mathrm{BR}}$ and $\Gamma_{\mathrm{IP}}$ are defined in terms of $\mathcal{Q}$, where the label space $[0,1]$ is continuous. Thus, we define the following operators with Q-functions on $\mathcal{U}$.

\paragraph{Discretized BR and IP operator.}
Let $\widetilde{\mathcal{Q}}$ be the collection of all $L_2$-integrable $\mathcal{U}\times\X\times A\to\R$ functions, we define the discretized BR operator $\widetilde{\Gamma}_{\mathrm{BR}}: \P_{\mathrm{unif}}([0,1]\times \X) \to \widetilde{\mathcal{Q}}$ by $\widetilde{\Gamma}_{\mathrm{BR}}(\mu)=\widetilde{Q}^{\mu}$ which solves the equation
\begin{align*}
    \widetilde{Q}^{\mu}(u_d,x,a)&= f(x,\sW\mu(u_d), a) + \gamma\langle P(x,\sW\mu(u_d), a), \sup_{a\in A}\widetilde{Q}^{\mu}(u_d,\cdot ,a )\rangle, \qquad \forall u_d\in\mathcal{U}
.\end{align*}
$\widetilde{\Gamma}_{\mathrm{BR}}$ returns $D$ best responses for labels in $\mathcal{U}$ w.r.t. population distribution $\mu$. In particular, $\widetilde{Q}^{\mu}$ and $Q^{\mu}$ coincide at $\mathcal{U}\times\X\times A$.

The discretized IP operator $\widetilde{\Gamma}_{\mathrm{IP}}: \widetilde{Q}\times \P_{\mathrm{unif}}([0,1]\times \X) \to \P(\X)^{\mathcal{U}}$ is defined by $\widetilde{\Gamma}_{\mathrm{IP}}(\widetilde{Q},\mu)=\L(U,X)$ where $X$ follows the Markov transition with population measure $\mu$, and under policy $\Gamma_{\pi}(\widetilde{Q})$, conditional on $U\in\mathcal{U}$. In other words, it is the induced state distribution on $\mathcal{P}(\X)^{\mathcal{U}}$ for the $D$ classes.

For notation simplicity, we denote $\Gamma_{\mathrm{IP}}\Gamma_{\mathrm{BR}}(\mu)=\Gamma_{\mathrm{IP}}(\Gamma_{\mathrm{BR}}(\mu), \mu)$, similarly for  $\widetilde{\Gamma}_{\mathrm{IP}}\widetilde{\Gamma}_{\mathrm{BR}}$.

\paragraph{Algorithm operator.}  Finally, the algorithm operator $\widehat{\Gamma}: \P(\X)^{\mathcal{U}} \to \P(\X)^{\mathcal{U}}$ is defined by
\begin{align*}
    \widehat{\Gamma}:\{M^{k,0}_{d}\} _{d=1}^{D} \mapsto \{M^{k,H}_{d}\} _{d=1}^{D}
.\end{align*}
It returns the updated $D$-class population measure after an outer iteration of \cref{alg:apx-online}, consisting of $H$ online stochastic updates to the $D$-class Q- and M-value functions. 

Given the initial $D$-class population estimate $M_{0} \coloneqq \{M^{0,0}_{d}\}_{d=1}^{D} \in \P(\X)^{\mathcal{U}}$, we can express \cref{alg:apx-online} as
\begin{equation}\label{eq:alg-op}
\bm{\Pi}_{D}\widehat{\Gamma}^{K}M_{0} = \bm{\Pi}_{D}\left( \widehat{\Gamma} \bm{\Pi}_{D}^\dagger \bm{\Pi}_{D} \right)^{K}M_{0}
= \left( \bm{\Pi}_{D}\widehat{\Gamma}\bm{\Pi}_{D}^\dagger \right)^{K} \bm{\Pi}_{D}M_{0}
.\end{equation}

\subsection{Sample Complexity Analysis}
Recall that in \cref{assump:general.algo,assump:ergodic}, $L_P,L_f$ are the Lipschitz constants of transition kernel and reward function, $L_d$ is the constant controlling graphon discretization error, $\kappa$ is the contraction factor of one step FPI, and $c_1, c_2$ are the constants associated with the ergodicity. We now give a paraphrase of \cref{theorem:sample.complexity} which includes the dependence of sample complexity on these assumed constants.

\begin{theorem} \label{theorem:rephrase.of.sample.complexity}
	Let $\widehat{\mu}$ be the stationary equilibrium measure of the infinite horizon GMFG. Suppose \cref{assump:general.algo,assump:ergodic} hold.
For any initial estimate $M^{0,0} \in \P(\X)^{\mathcal{U}}$, the sample complexity of \cref{alg:apx-online} is given by
\begin{equation} \label{eq:sample.complexity.with.dependence}
 \begin{aligned}
    & \mathbb{E}\left\| \bm{\Pi}_{D}M^{K,H}-\widehat{\mu} \right\|^{2} \le O\Bigg(\exp(-\kappa K)\mathbb{E}\left\| \M^{0,0}-\widehat{\mu} \right\|^{2} \\
    &+\frac{1}{\kappa^{2}}\bigg( \frac{|\mathcal{X} | A| L_\pi^2 L_f^2 L_d^2 \sigma^2\left(1-\gamma+|f|_{\infty}\right)^2}{(1-\gamma)^4 D^2} + \frac{L_P^2 L_d^2 \sigma^2}{D^2} + \frac{D|\mathcal{X}|^2|A||f|_{\infty}^2 L_\pi^2 \sigma^2 \log H}{\theta^2(1-\gamma)^4 H} \bigg)\Bigg),
\end{aligned} 
\end{equation}
where 
$\sigma \coloneqq \widehat{n} + c_1 c_2^{\widehat{n}} / (1-c_2)$, $\widehat{n} = \left\lceil \log_{c_2} c_1^{-1}  \right\rceil$, and 
    \begin{align*}
			\theta := \inf_{(u,x,a)\in[0,1]\times \X\times A}\inf_{q\in \Q} \mu_{q}(u,x)\Gamma_{\pi}(q^{u})[a\given x] > 0
\end{align*}
is the lower bound of the probability of visiting any label-state-action tuple under the steady distribution $\mu_{q}\in\Pu([0,1]\times \X)$ induced by any value function $q$.

\end{theorem}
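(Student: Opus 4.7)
The plan is to exploit the algebraic identity \eqref{eq:alg-op}, namely $\bm{\Pi}_{D}M^{K,H}=(\bm{\Pi}_{D}\widehat{\Gamma}\bm{\Pi}_{D}^\dagger)^{K}\bm{\Pi}_{D}M^{0,0}$, and compare the lifted algorithm operator $\bm{\Pi}_{D}\widehat{\Gamma}\bm{\Pi}_{D}^\dagger$ to the exact FPI operator $\Gamma=\Gamma_{\mathrm{IP}}\Gamma_{\mathrm{BR}}$, whose fixed point is the equilibrium $\widehat{\mu}$. I would first establish a one-step error decomposition, for an arbitrary $\mu\in\P_{\mathrm{unif}}([0,1]\times\X)$,
\begin{equation*}
\bm{\Pi}_{D}\widehat{\Gamma}\bm{\Pi}_{D}^\dagger\mu-\Gamma\mu=\underbrace{\bm{\Pi}_{D}\widehat{\Gamma}\bm{\Pi}_{D}^\dagger\mu-\bm{\Pi}_{D}\widetilde{\Gamma}_{\mathrm{IP}}\widetilde{\Gamma}_{\mathrm{BR}}\mu}_{\text{stochastic approximation error}}+\underbrace{\bm{\Pi}_{D}\widetilde{\Gamma}_{\mathrm{IP}}\widetilde{\Gamma}_{\mathrm{BR}}\mu-\Gamma\mu}_{\text{discretization error}},
\end{equation*}
then iterate this bound using the $(1-\kappa)$-contraction property \cref{assump:general.algo}(\ref{assump:contractive.FPI}) of $\Gamma$ and the boundedness of $\bm{\Pi}_{D}$ proved in \cref{prop:pid}. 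A standard telescoping argument then yields
\begin{equation*}
\E\|\bm{\Pi}_{D}M^{K,H}-\widehat{\mu}\|^{2}\lesssim (1-\kappa)^{2K}\E\|\bm{\Pi}_{D}M^{0,0}-\widehat{\mu}\|^{2}+\tfrac{1}{\kappa^{2}}\bigl(\mathcal{E}_{\mathrm{disc}}^{2}+\mathcal{E}_{\mathrm{SA}}^{2}\bigr),
\end{equation*}
where $\mathcal{E}_{\mathrm{disc}}$ and $\mathcal{E}_{\mathrm{SA}}$ are uniform bounds on the two one-step errors. Choosing $K=O(\kappa^{-1}\log\epsilon^{-1})$ kills the first term, and the remaining budget is split between $D$ and $H$.

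For the discretization error, I would propagate the graphon bound from \cref{prop:W}, $\|\sW\mu(u)-\sW\mu(\Pi_{D}(u))\|_{\mathrm{TV}}\le L_{d}/D$, through the Bellman fixed-point equation defining $\widetilde{\Gamma}_{\mathrm{BR}}$: using the Lipschitzness of $f$ and $P$ in the measure argument (\cref{assump:general.algo}(1)) together with the standard $(1-\gamma)^{-1}$ amplification of the Bellman operator, one gets an $L_{\infty}$ bound on $Q^{\mu}-\widetilde{Q}^{\mu}$ of order $L_{f}L_{d}/[(1-\gamma)^{2}D]$. Pushing this through the Lipschitz policy operator $\Gamma_{\pi}$ (equation \eqref{assump:Lipschitz.Gamma.pi}) and then through the IP operator (whose Lipschitz constant in policy and in population is controlled by $L_{P}$, $|f|_\infty$ and geometric mixing under \cref{assump:ergodic}) produces $\mathcal{E}_{\mathrm{disc}}^{2}$ matching the first two bracketed terms in \eqref{eq:sample.complexity.with.dependence}. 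The ergodic constants $c_{1},c_{2}$ enter only through the effective horizon $\sigma=\widehat{n}+c_{1}c_{2}^{\widehat{n}}/(1-c_{2})$.

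For the stochastic-approximation error, I would invoke existing finite-time analyses of SARSA under Markovian sampling and a Lipschitz, exploratory policy operator (\cite{zou2019finite}), captured in the paper's \cref{lem:control}, which gives $\E\|Q^{k,H}_{d}-\widetilde{Q}^{M^{k,0}}(u_{d},\cdot,\cdot)\|_{2}^{2}=O(|\X||A||f|_{\infty}^{2}L_{\pi}^{2}\sigma^{2}\log H/[\theta^{2}(1-\gamma)^{4}H])$ with step sizes $\alpha_{\tau}\asymp 1/\tau$, where the lower bound $\theta$ on visitation probabilities controls the inverse-covariance of the Markovian SARSA updates. The simultaneous MCMC update for the population converges at rate $O(\log H/H)$ by the analogous \cref{lem:pop}, and because both recursions share the same trajectory their errors can be handled jointly by an application of the tower property conditional on the realized sample path. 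Summing the two contributions and multiplying by $D$ (since $\bm{\Pi}_{D}$ averages over classes and the TV norm picks up a $\sqrt{D}$ factor per class under the product-space view) gives the third bracketed term.

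The main technical obstacle will be the stochastic-approximation step: unlike tabular Q-learning with i.i.d.\ samples, the SARSA trajectory here is a non-reversible Markov chain driven by the current policy $\Gamma_{\pi}(Q^{k,\tau}_{d})$, and the target itself depends on this policy through $\Gamma_{\pi}$, so the usual martingale decomposition must be combined with a Poisson-equation/mixing-time bound whose ergodicity constants are exactly those appearing in \cref{assump:ergodic}. Once both one-step errors are assembled, setting $\mathcal{E}_{\mathrm{disc}}/\kappa=\Theta(\epsilon)$ forces $D=O(\kappa^{-1}\epsilon^{-1})$ and $\mathcal{E}_{\mathrm{SA}}/\kappa=\Theta(\epsilon)$ forces $H=O(\kappa^{-3}\epsilon^{-3}\log\epsilon^{-1})$ (the extra $\kappa^{-2}$ coming from the $D$ factor in the SA bound); multiplying by $K=O(\kappa^{-1}\log\epsilon^{-1})$ and by $D$ gives the advertised total sample complexity $O(\kappa^{-5}\epsilon^{-4}\log^{2}\epsilon^{-1})$.
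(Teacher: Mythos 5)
Your proposal is correct and follows essentially the same route as the paper: the identity \eqref{eq:alg-op}, a one-step error decomposition against $\Gamma_{\mathrm{IP}}\Gamma_{\mathrm{BR}}$ (the paper merely splits your discretization term further into the BR and IP pieces of \cref{lem:pol} and \cref{lem:pop}), Young's inequality combined with the contraction of $\Gamma$ to telescope, and the same budget split yielding $K=O(\kappa^{-1}\log\epsilon^{-1})$, $D=O(\kappa^{-1}\epsilon^{-1})$, $H=O(\kappa^{-3}\epsilon^{-3}\log\epsilon^{-1})$. The only cosmetic discrepancy is your $(1-\kappa)^{2K}$ decay versus the paper's $(1-\kappa)^{K}$ (the Young's-inequality loss halves the effective contraction for the squared norm), which does not affect the stated rates.
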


Our analysis follows the following illustration:
\[
	\underbrace{\bm{\Pi}_{D}\widehat{\Gamma}^{K}M_0}_{\text{\cref{alg:apx-online}}} \quad\xrightarrow{\text{approximates}}\quad
\underbrace{\left(\bm{\Pi}_{D}\widetilde{\Gamma}_{\mathrm{IP}}\widetilde{\Gamma}_{\mathrm{BR}}\right)^{K}\bm{\Pi}_{D}M_0 }_{\text{Finite-label FPI}}\quad\xrightarrow{\text{approximates}}\quad
\underbrace{\left( \Gamma_{\mathrm{IP}}\Gamma_{\mathrm{BR}} \right)^{K}\bm{\Pi}_{D}M_0}_{\text{FPI}}
,\]
and we first give the one-step approximation error of \cref{alg:apx-online}.
\begin{proposition}[One-step approximation error] \label{prop:onestep.online.error}
For any $\nu\in \bm{\Pi}_{D}\mathcal{P}(\X)^{\mathcal{U}}$, we have
  \[
  \EE\left\|\left(\Gamma_{\mathrm{IP}}\Gamma_{\mathrm{BR}} - \bm{\Pi}_{D}\widehat{\Gamma}\bm{\Pi}_{D}^\dagger\right)\nu\right\|_{\mathrm{TV}}^2 = O\left( \frac{D\log H}{H} + \frac{1}{D^2} \right)
  .\] 
\end{proposition}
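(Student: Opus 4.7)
The plan is to interpolate through the discretized best-response/induced-population operators and apply the triangle inequality:
\begin{align*}
& \Gamma_{\mathrm{IP}}\Gamma_{\mathrm{BR}}\nu - \bm{\Pi}_{D}\widehat{\Gamma}\bm{\Pi}_{D}^{\dagger}\nu \\
&\quad = \underbrace{\bigl(\Gamma_{\mathrm{IP}}\Gamma_{\mathrm{BR}}\nu - \bm{\Pi}_{D}\widetilde{\Gamma}_{\mathrm{IP}}\widetilde{\Gamma}_{\mathrm{BR}}\nu\bigr)}_{=:E_{\mathrm{disc}}} + \underbrace{\bm{\Pi}_{D}\bigl(\widetilde{\Gamma}_{\mathrm{IP}}\widetilde{\Gamma}_{\mathrm{BR}}\nu - \widehat{\Gamma}\bm{\Pi}_{D}^{\dagger}\nu\bigr)}_{=:E_{\mathrm{stoch}}}.
\end{align*}
Since $\nu\in\bm{\Pi}_{D}\mathcal{P}(\X)^{\mathcal{U}}$ is already a step kernel in its label variable, one has $\bm{\Pi}_{D}\bm{\Pi}_{D}^{\dagger}\nu=\nu$, so inside $\widehat{\Gamma}$ the frozen population is exactly $\nu$. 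It therefore suffices to establish $\|E_{\mathrm{disc}}\|_{\mathrm{TV}}=O(1/D)$ deterministically and $\EE\|E_{\mathrm{stoch}}\|_{\mathrm{TV}}^{2}=O(D\log H/H)$; squaring and taking expectations through the triangle inequality then closes the claim.

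For $E_{\mathrm{disc}}$, observe that the Bellman equation defining $Q^{\nu}$ restricted to $\mathcal{U}$ coincides with the defining equation of $\widetilde{Q}^{\nu}$, so $Q^{\nu}$ and $\widetilde{Q}^{\nu}$ agree on $\mathcal{U}\times\X\times A$; consequently the policies and transitions driving $\Gamma_{\mathrm{IP}}\Gamma_{\mathrm{BR}}\nu$ at a grid label $u_{d}$ match those driving $\widetilde{\Gamma}_{\mathrm{IP}}\widetilde{\Gamma}_{\mathrm{BR}}\nu$ at $u_{d}$. The TV norm of $E_{\mathrm{disc}}$ then reduces to the spatial variation of the $u$-conditional of $\Gamma_{\mathrm{IP}}\Gamma_{\mathrm{BR}}\nu$ between $u\in I_{u_{d}}$ and its projection $\Pi_{D}(u)=u_{d}$. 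Lemma~\ref{prop:W} gives $\|\sW\nu(u)-\sW\nu(\Pi_{D}(u))\|_{\mathrm{TV}}\le L_{d}/D$ uniformly; combining with the Lipschitzness of $f,P$ in the measure argument (\cref{assump:general.algo}(1)) and a standard perturbed-contraction argument for the $\gamma$-contractive Bellman operator yields $\sup_{u,x,a}|Q^{\nu}(u,x,a)-Q^{\nu}(\Pi_{D}(u),x,a)|=O(L_{d}/((1-\gamma)^{2}D))$. Propagating this through the $L_{\pi}$-Lipschitz policy operator \eqref{assump:Lipschitz.Gamma.pi} and one further application of the Lipschitz transition kernel bounds the per-label TV difference by $O(1/D)$ uniformly in $u$, and integrating over $[0,1]$ delivers $\|E_{\mathrm{disc}}\|_{\mathrm{TV}}=O(1/D)$.

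For $E_{\mathrm{stoch}}$, Lemma~\ref{prop:pid} gives $\|E_{\mathrm{stoch}}\|_{\mathrm{TV}}\le\sup_{d}\|(\widetilde{\Gamma}_{\mathrm{IP}}\widetilde{\Gamma}_{\mathrm{BR}}\nu)^{u_{d}}-M^{k,H}_{d}\|_{\mathrm{TV}}$. For each fixed $d$, the $H$ inner updates \eqref{alg:online} form a coupled two-time-scale stochastic approximation on a single-agent MDP with frozen neighborhood measure $\sW\nu(u_{d})$: SARSA tracks the Bellman fixed point $\widetilde{Q}^{\nu}(u_{d},\cdot,\cdot)$, while MCMC Polyak-averages the visited states toward the stationary distribution of the Markov chain driven by $\Gamma_{\pi}(\widetilde{Q}^{\nu}(u_{d},\cdot,\cdot))$ under $\sW\nu(u_{d})$, which is precisely $(\widetilde{\Gamma}_{\mathrm{IP}}\widetilde{\Gamma}_{\mathrm{BR}}\nu)^{u_{d}}$. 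Under the geometric ergodicity in \cref{assump:ergodic}, dedicated finite-time analyses of SARSA under Markovian noise and of its tracking MCMC estimator (deferred to lemmas \texttt{lem:control} and \texttt{lem:pop} in this appendix) deliver $\EE\|M^{k,H}_{d}-(\widetilde{\Gamma}_{\mathrm{IP}}\widetilde{\Gamma}_{\mathrm{BR}}\nu)^{u_{d}}\|_{\mathrm{TV}}^{2}=O(\log H/H)$ per class. The crude union-type bound $\EE\sup_{d}X_{d}^{2}\le\sum_{d}\EE X_{d}^{2}$ across the $D$ classes then produces the stated $O(D\log H/H)$.

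The main obstacle is the stochastic term: SARSA and MCMC are driven by the same non-i.i.d.\ sample path, so the two per-class rates must be derived jointly, decoupling the drift of the evolving softmax policy $\Gamma_{\pi}(Q^{k,\tau}_{d})$ (which feeds back into the chain whose invariant distribution MCMC is approximating) from the Markovian noise in SARSA's value iterate, and using the $L_{\pi}$-Lipschitz policy operator to transfer the value-function error into a TV error on that invariant distribution. Tracking the explicit dependence on $(L_{\pi},L_{f},L_{P},\theta,\sigma,1-\gamma)$ throughout this two-time-scale coupling is what ultimately supplies the constants reported in \cref{theorem:rephrase.of.sample.complexity}.
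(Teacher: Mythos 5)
Your proposal is correct and follows essentially the same route as the paper: the paper splits the error into three terms ($G_1$ for the Q-function discretization, $G_2$ for the induced-population discretization, $G_3$ for the stochastic approximation) where you merge the first two into a single deterministic term $E_{\mathrm{disc}}$, but the substance — agreement of $Q^{\nu}$ and $\widetilde{Q}^{\nu}$ on the grid, the $L_d/D$ graphon bound, the $L_{\pi}$-Lipschitz policy operator, and a per-class $O(\log H/H)$ SA rate summed over $D$ classes — is identical. Two small corrections: the stochastic term rests only on \cref{lem:control} (not \cref{lem:pop}, which is the deterministic population-discretization bound), and your final step for $E_{\mathrm{disc}}$ needs a Mitrophanov-type perturbation bound for stationary distributions (carrying the ergodicity constant $\sigma$), since Lipschitzness of $P$ alone does not transfer a perturbation of the transition kernel into a TV bound on the invariant measure.
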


\begin{proof}
  Consider the decomposition
  \[
    \begin{aligned}
      \EE\left\|\left(\Gamma_{\mathrm{IP}}\Gamma_{\mathrm{BR}} - \bm{\Pi}_{D}\widehat{\Gamma}\bm{\Pi}_{D}^\dagger\right)\nu\right\|_{\mathrm{TV}}^2 
      \le& 3\mathbb{E} \underbrace{\left\| \Gamma_{\mathrm{IP}}\left( \Gamma_{\mathrm{BR}} - \bm{\Pi}_{D}\widetilde{\Gamma}_{\mathrm{BR}} \right) \nu\right\|_{\mathrm{TV}}^2}_{G_1}\\
         &+ 3\mathbb{E} \underbrace{\left\|\left( \Gamma_{\mathrm{IP}}\bm{\Pi}_{D} - \bm{\Pi}_{D} \widetilde{\Gamma}_{\mathrm{IP}}\right)\widetilde{\Gamma}_{\mathrm{BR}}\nu\right\|_{\mathrm{TV}}^{2}}_{G_2}\\
         &+ 3\underbrace{\mathbb{E} \left\|\bm{\Pi}_{D}\left( \widetilde{\Gamma}_{\mathrm{IP}}\widetilde{\Gamma}_{\mathrm{BR}} - \widehat{\Gamma} \bm{\Pi}_{D}^\dagger \right)\nu\right\|_{\mathrm{TV}}^2}_{G_3}
   .\end{aligned}
  \] 
  Note that the kernel resulting from disintegration is only Lebesgue a.e. defined. However, we only consider those $\nu\in \bm{\Pi}_{D}\mathcal{P}(\X)^{\mathcal{U}}$, i.e., there exists some $M\in \mathcal{P}(\X)^{\mathcal{U}}$ such that $\nu=\bm{\Pi}_{D}M$, and thus $\bm{\Pi}_{D}^\dagger \nu = M$ is unique without ambiguity. 
  
  Let $q \coloneqq \Gamma_{\mathrm{BR}}\nu$ and ${\mu} \coloneqq \Gamma_{\mathrm{IP}}(q,\nu) = \Gamma_{\mathrm{IP}}\Gamma_{\mathrm{BR}}\nu$. Similarly, let $\widetilde{q} \coloneqq \widetilde{\Gamma}_{\mathrm{BR}}\nu$ and $\widetilde{\mu} \coloneqq \Gamma_{\mathrm{IP}}(\bm{\Pi}_{D}\widetilde{q},\nu) = \Gamma_{\mathrm{IP}}( \bm{\Pi}_{D}\widetilde{\Gamma}_{\mathrm{BR}}\nu, \nu)$. 
  To distinguish, $\mu,\widetilde{\mu}\in\P_{\mathrm{unif}}([0,1]\times\X)$, $q\in\mathcal{Q}$, $\widetilde{q}\in\widetilde{\mathcal{Q}}$.
  Then, we have
    \begin{align*}
      \sqrt{G_1} =\left\| \mu - \widetilde{\mu} \right\|_{\mathrm{TV}}
        &\le\sup_{\Vert\phi\Vert_{\infty}\le 1} \int _{[0,1]} \Big|\int_{\X}\phi(u,x) (\mu_u-\widetilde{\mu}_u)(dx)\Big| du\\
      &\le \int _{[0,1]} \Vert\mu_u-\widetilde{\mu}_u\Vert_{\mathrm{TV}} du\\
      &\le \sup_{u\in [0,1]} \left\| \mu_u-\widetilde{\mu}_u \right\|_{\mathrm{TV}}
    .\end{align*}

		Since $\mu_{u}$ and $\widetilde{\mu}_{u}$ are the law of process $X|U=u$ with the same transition kernel, by \citep[Lemma 4]{qmi}, we have for almost every $u$,
\begin{align*}
  \| \mu_u-\widetilde{\mu}_u \|_{\mathrm{TV}} \le L_{\pi} \sigma \left\| q(u,\cdot) - \widetilde{q}(\Pi_{D}(u),\cdot) \right\|_{2} \le L_{\pi}\sigma \sqrt{|\X||A|}\left\| q(u,\cdot) - \widetilde{q}(\Pi_{D}(u),\cdot) \right\|_{\infty}
,\end{align*}
  which gives
  \[
  \sup_{u\in[0,1]} \| \mu_u-\widetilde{\mu}_u \|_{\mathrm{TV}}
  \le L_{\pi}\sigma \sqrt{|\X||A|} \left\| q - \bm{\Pi}_{D}\widetilde{q} \right\|_{\infty}
  .\] 
  Therefore, by \cref{lem:pol}, we get
\begin{align}\label{eq:G1}
     G_1 \le \frac{|\X||A|L_{\pi}^2L^2_{d}\sigma^2((1-\gamma)L_{f} + \gamma\Vert f\Vert_{\infty}L_{P})^2}{(1-\gamma)^4D^2} 
.\end{align}
  For $G_2$, by \cref{lem:pop}, we have
\begin{align}\label{eq:G2}
  G_2 \le \frac{L^2_{P}L^2_{D}\sigma^2}{D^2}
.\end{align}
  And \cref{lem:control} gives
\begin{align}\label{eq:G3}
  G_3 = O \left(\frac{D|\X|^2|A|\Vert f\Vert_{\infty}^2L^2_{\pi}\sigma^2\log H}{\theta^2(1-\gamma)^{4}H}\right)
.\end{align}
  Plugging the above bounds on $G_1$, $G_2$, and $G_3$ into  gives the desired result.
\end{proof}

Combining \cref{prop:onestep.online.error} and the contraction assumption of FPI (\cref{assump:general.algo}(\ref{assump:contractive.FPI})), we are able to show \cref{theorem:rephrase.of.sample.complexity} recursively.

\begin{proof}[Proof of \cref{theorem:rephrase.of.sample.complexity}]
	In this proof, we omit the subscript of the total variation norm for simplicity. We denote $M_k=M^{k,0}=\{M^{k,0}_d\}_{d=1}^D$ and $\mu_k:=\bm{\Pi}_{D} M_k$ for $k=0,\dots, K$. Note that $M_k=\widehat{\Gamma}^{k}M_{0}$.
	By \eqref{eq:alg-op} and the definition of the equilibrium population measure $\widehat{\mu}$, we have
	\begin{equation}\label{eq:thm-sample-1}
	  \mathbb{E}\left\| \mu_K - \widehat{\mu}   \right\|^{2}
	  = \mathbb{E}\left\| \bm{\Pi}_{D}\widehat{\Gamma}^{K}M_0 - \widehat{\mu}   \right\|^{2}
		= \mathbb{E}\left\|  \bm{\Pi}_{D}\widehat{\Gamma}\bm{\Pi}_D^\dagger \mu_{K-1} - \Gamma_{\mathrm{IP}}\Gamma_{\mathrm{BR}}\widehat{\mu}  \right\|^{2}
	,\end{equation}
	Then, by Young's inequality, we have
		\begin{align} \nonumber
		&\mathbb{E}\left\|  \bm{\Pi}_{D}\widehat{\Gamma}\bm{\Pi}_D^\dagger \mu_{K-1} - \Gamma_{\mathrm{IP}}\Gamma_{\mathrm{BR}}\widehat{\mu}  \right\|^{2}\\\nonumber
		=& \mathbb{E}\left\|  \left( \bm{\Pi}_{D}\widehat{\Gamma}\bm{\Pi}_D^\dagger -  \Gamma_{\mathrm{IP}}\Gamma_{\mathrm{BR}}\right) \mu_{K-1} + \Gamma_{\mathrm{IP}}\Gamma_{\mathrm{BR}}(\mu_{K-1} - \widehat{\mu})  \right\|^{2}\\ \label{eq:proposition+contraction}
		\le& (1+1 /\kappa)\mathbb{E}\left\|  \left( \bm{\Pi}_{D}\widehat{\Gamma}\bm{\Pi}_D^\dagger -  \Gamma_{\mathrm{IP}}\Gamma_{\mathrm{BR}}\right) \mu_{K-1}\right\|^{2} + (1 + \kappa)\mathbb{E}\left\| \Gamma_{\mathrm{IP}}\Gamma_{\mathrm{BR}}(\mu_{K-1} - \widehat{\mu})  \right\|^{2}
		.\end{align}
Applying \cref{prop:onestep.online.error} for the first term and the contracting FPI assumption for the second term in \eqref{eq:proposition+contraction}, we get
	\[
		\begin{aligned}
		\mathbb{E}\left\|  \bm{\Pi}_{D}\widehat{\Gamma}\bm{\Pi}_D^\dagger \mu_{K-1} - \Gamma_{\mathrm{IP}}\Gamma_{\mathrm{BR}}\widehat{\mu}  \right\|^{2}
		\le& (1+ 1 /\kappa) \cdot O\left( \frac{D}{H} + \frac{1}{D^2} \right) + (1 + \kappa) (1-\kappa)^{2} \mathbb{E}\left\| \mu_{K-1} - \widehat{\mu}  \right\|^{2} \\
		\le&  \frac{1}{\kappa}\cdot O\left( \frac{D}{H} + \frac{1}{D^2} \right) + (1 - \kappa) \mathbb{E}\left\| \mu_{K-1} - \widehat{\mu}  \right\|^{2}
		.\end{aligned}
	\]
	Recursively applying the above inequality to \cref{eq:thm-sample-1} gives
	\[
		\begin{aligned}
			\mathbb{E}\left\| \mu_K - \widehat{\mu}   \right\|^{2} \le&
		(1-\kappa)^{K} \mathbb{E}\left\| \mu_{0} - \widehat{\mu}   \right\|^{2} + \sum_{k=1}^{K} (1-\kappa)^{k}  \frac{1}{\kappa}\cdot O\left( \frac{D}{H} + \frac{1}{D^2} \right)\\
		&= O\left( \exp(-\kappa K)\mathbb{E}\left\| \mu_{0} - \widehat{\mu}   \right\|^{2}  + \frac{1}{\kappa^2}O\Big(\frac{D}{H}+ \frac{1}{D^{2}}\Big)  \right)
		,\end{aligned}
	\]
	which indicates \eqref{eq:sample.complexity.with.dependence} by substituting the shorthand notation $O\left( \frac{D}{H} + \frac{1}{D^2} \right)$ with the explicit bounds of $G_1, G_2, G_3$ in \eqref{eq:G1}, \eqref{eq:G2}, \eqref{eq:G3} respectively. 
	Therefore, to find an approximation equilibrium population measure $\mu_K$ such that $\mathbb{E}\|\mu_K - \widehat{\mu}\|
\le \epsilon$ for some error $\epsilon$, we need at most
\[
  K = O(\kappa^{-1} \log\epsilon^{-1}),\quad
	D = O(\kappa^{-1} \epsilon^{-1}  ),\quad
	H = O(\kappa^{-3} \epsilon^{-3}\log \epsilon^{-1}   )
.\]

\end{proof}

\subsection{Auxiliary Lemmas}

The following lemmas address $G_3$, $G_2$, and $G_1$ in \cref{prop:onestep.online.error} respectively.

\begin{lemma}[Online learning approximation error]\label{lem:control}
	Suppose \cref{assump:ergodic} holds. With step sizes of $\alpha _{\tau}, \beta_{\tau} \asymp 1 /\tau$, for any $M \in \P(\X)^{\mathcal{U}}$, we have
  \[
  \mathbb{E}\left\| \bm{\Pi}_{D}\left( \widetilde{\Gamma}_{\mathrm{IP}}\widetilde{\Gamma}_{\mathrm{BR}}\bm{\Pi}_{D} - \widehat{\Gamma} \right)M \right\|_{\mathrm{TV}}^2 
  =O\left( \frac{D\Vert f\Vert_{\infty}^2L_{\pi}^2\sigma^2 |\X|^{2}|A|\log H}{\theta^2(1-\gamma)^{4}H} \right).
\]
\end{lemma}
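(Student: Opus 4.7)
The plan is to decompose the error class by class and to control each class-wise error by a finite-time analysis of the two coupled stochastic approximation recursions running inside the inner loop: the SARSA recursion for $Q^{k,\tau}_{d}$ and the Polyak-type averaging recursion for $M^{k,\tau}_{d}$. Write $\mu=\bm{\Pi}_{D}M$, so that, for each $d$, $\widetilde{\Gamma}_{\mathrm{BR}}(\mu)(u_{d},\cdot,\cdot)$ is the fixed point of $\mathcal{T}_{\mu}$ restricted to label $u_{d}$ and $\widetilde{\Gamma}_{\mathrm{IP}}\widetilde{\Gamma}_{\mathrm{BR}}(\mu)(u_{d})$ is the stationary measure of the ergodic chain with transition $P(\cdot,\sW\mu(u_{d}),\Gamma_{\pi}(\widetilde{\Gamma}_{\mathrm{BR}}(\mu)(u_{d},\cdot,\cdot)))$. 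Because $\sW\bm{\Pi}_{D}M^{k,0}(u_{d})$ is frozen throughout the $H$ inner steps for class $d$, each of the $D$ classes becomes an independent single-agent RL problem with an ergodic sampling chain.

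For the Q-side I plan to invoke a finite-time bound for SARSA under Markovian sampling in the style of Zou et al.~(2019). With $\alpha_{\tau}\asymp 1/\tau$, the lower bound $\theta$ on the joint label-state-action visitation probability (guaranteed via $\Gamma_{\pi}$ and \cref{assump:ergodic}) ensures every entry of the Q-table is refreshed sufficiently often, and one obtains
\[
  \mathbb{E}\bigl\|Q^{k,H}_{d}-\widetilde{\Gamma}_{\mathrm{BR}}(\mu)(u_{d},\cdot,\cdot)\bigr\|_{\infty}^{2}
  =O\!\left(\frac{\|f\|_{\infty}^{2}|\X||A|\sigma^{2}\log H}{\theta^{2}(1-\gamma)^{4}H}\right),
\]
where the $(1-\gamma)^{-4}$ factor is the usual Q-learning conditioning and $\sigma$ absorbs the mixing cost from \cref{assump:ergodic}.

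For the M-side, the recursion for $M^{k,\tau}_{d}$ is an empirical average along the same ergodic chain, but the acting policy $\Gamma_{\pi}(Q^{k,\tau}_{d})$ drifts across $\tau$. I would couple with the idealized chain driven by $\Gamma_{\pi}(\widetilde{\Gamma}_{\mathrm{BR}}(\mu)(u_{d},\cdot,\cdot))$ and use \eqref{assump:Lipschitz.Gamma.pi} together with the Lipschitz continuity of $P$ to convert the policy drift into $L_{\pi}^{2}\|Q^{k,\tau}_{d}-\widetilde{\Gamma}_{\mathrm{BR}}(\mu)(u_{d},\cdot,\cdot)\|_{2}^{2}$, then apply an ergodic averaging bound analogous to the one used in \cref{lem:pop}. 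Combining with the SARSA bound above (after using $\|\cdot\|_{2}\le\sqrt{|\X||A|}\,\|\cdot\|_{\infty}$) yields
\[
  \mathbb{E}\bigl\|M^{k,H}_{d}-\widetilde{\Gamma}_{\mathrm{IP}}\widetilde{\Gamma}_{\mathrm{BR}}(\mu)(u_{d})\bigr\|_{\mathrm{TV}}^{2}
  =O\!\left(\frac{\|f\|_{\infty}^{2}L_{\pi}^{2}|\X|^{2}|A|\sigma^{2}\log H}{\theta^{2}(1-\gamma)^{4}H}\right).
\]

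Finally, I aggregate over $d=1,\dots,D$. Expanding $\|\bm{\Pi}_{D}(\cdot)\|_{\mathrm{TV}}$ via the piecewise structure in \cref{prop:pid} and then squaring gives a cross term $\bigl(\sum_{d}\|\cdot_{d}\|_{\mathrm{TV}}\bigr)^{2}\le D\sum_{d}\|\cdot_{d}\|_{\mathrm{TV}}^{2}$ by Cauchy-Schwarz, which is precisely where the extra $D$ in the target rate enters. The main obstacle will be Step~2: because the MCMC chain's transition kernel is itself shifting with the SARSA error, neither its invariant measure nor its mixing rate is stationary during the inner loop, and controlling the drift-plus-noise decomposition cleanly---without blowing past the $\theta^{-2}(1-\gamma)^{-4}$ budget---requires carefully matching the $1/\tau$ step-size schedules of the two recursions so that the Q-error, when integrated against the MCMC weights, still produces an $O(\log H/H)$ contribution rather than a slower rate.
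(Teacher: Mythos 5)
Your overall route is the same as the paper's: freeze the population argument $\sW\bm{\Pi}_{D}M(u_d)$ for each class $d$, identify $\widetilde{\Gamma}_{\mathrm{IP}}\widetilde{\Gamma}_{\mathrm{BR}}\bm{\Pi}_{D}M(u_d)$ as the stationary law of the per-class ergodic MDP under the policy $\Gamma_{\pi}(\widetilde{\Gamma}_{\mathrm{BR}}\bm{\Pi}_{D}M)(u_d,\cdot)$, bound the per-class error of the inner loop, and aggregate over the $D$ classes (the paper gets the factor $D$ from $\|\widetilde{\mu}-\widetilde{M}\|_{\mathrm{TV}}^2\le D\sup_d\|\cdot\|_{\mathrm{TV}}^2$ and the extra $|\X|$ from the $\ell_2$-to-TV conversion, rather than from your Cauchy--Schwarz on the sum, but these are equivalent bookkeeping). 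The substantive difference is in the per-class step: the paper does not reprove the coupled finite-time bound at all --- it invokes a single cited result (Lemma 3 of the reference denoted \texttt{qmi}) that directly delivers $\E\|\widetilde{\mu}^{u_d}-\widetilde{M}^{u_d}\|_2^2=O\bigl(\|f\|_\infty^2L_\pi^2\sigma^2|\X||A|\log H/(\theta^2(1-\gamma)^4H)\bigr)$ for the \emph{simultaneous} SARSA and MCMC recursions, whereas you propose to rebuild it from a SARSA bound plus a drifting-policy coupling for the averaging recursion. You correctly identify that the hard part is exactly this coupling --- the MCMC chain's kernel and invariant measure drift with the residual Q-error --- but your proposal leaves that step as an acknowledged obstacle rather than closing it; that is precisely the content the cited lemma supplies, so as written your argument is a plausible plan for reproving the paper's black box rather than a complete alternative proof. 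If you simply cite the same concurrent-update result, your write-up collapses to the paper's proof.
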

\begin{proof}
  We first denote $\widetilde{\mu} = \{ \widetilde{\mu}^{u_{d}} \}_{d=1}^{D} \coloneqq \widetilde{\Gamma}_{\mathrm{IP}}\widetilde{\Gamma}_{\mathrm{BR}}\bm{\Pi}_{D}M$ and $\widetilde{M} = \{ \widetilde{M}^{u_{d}}\} \coloneqq \widehat{\Gamma}M$.
  Then, we know that $\widetilde{\mu}^{u_d}$ is the stationary distribution of the MDP dynamic with population measure $\bm{\Pi}_{D}M$ and policy $\Gamma_{\pi}(\widetilde{\Gamma}_{\mathrm{BR}}\bm{\Pi}_{D}M)$, conditional on $U=u_d$ at time 0. In other words, the measure argument of reward function and transition kernel is $ \sW\bm{\Pi}_{D}M(u_{d})$, and the process is controlled by policy $\Gamma_{\pi}(\widetilde{\Gamma}_{\mathrm{BR}}\bm{\Pi}_{D}M)(u_d,\cdot)$, which is the optimal policy.

This is the same MDP in \cref{alg:apx-online} for label $u_{d}$.
Thus, by \citep[Lemma 3]{qmi}, for any $u_{d}\in \mathcal{U}$, we have
  \[
  \mathbb{E}\left\| \widetilde{\mu}^{u_{d}} - \widetilde{M}^{u_{d}} \right\|^2_{2} = O \left( \frac{\Vert f\Vert_{\infty}^2L_{\pi}^2\sigma^2|\X||A| \log H}{\theta^2(1-\gamma)^{4}H} \right)
  ,\]
	where $\sigma \coloneqq \widehat{n} + c_1 c_2^{\widehat{n}} / (1-c_2)$, $\widehat{n} = \left\lceil \log_{c_2} c_1^{-1}  \right\rceil$, and    
 \begin{align*}
	 \theta := \inf_{(u,x,a)\in[0,1]\times \X\times A}\inf_{q\in \Q} \mu_{q}(u,x)\Gamma_{\pi}(q^{u} )[a\given x] > 0
.\end{align*}
  Therefore, by \cref{prop:pid}, we have
  \[
    \begin{aligned}
    \mathbb{E}\left\| \bm{\Pi}_{D}\left( \widetilde{\mu} - \widetilde{M} \right) \right\|_{\mathrm{TV}}^2
    \le \mathbb{E}\left\| \widetilde{\mu} - \widetilde{M} \right\|^2_{\mathrm{TV}}
    &\le D\sup_{u_d\in\mathcal{U}} \mathbb{E}\left\| \widetilde{\mu}^{u_d} - \widetilde{M}^{u_d} \right\|_{\mathrm{TV}}^2 \\
    \le& D|\X|\sup_{u_d\in\mathcal{U}} \E\left\| \widetilde{\mu}^{u_{d}} - \widetilde{M}^{u_{d}} \right\|_{2}^2 \\
    =& O\left( \frac{D\Vert f\Vert_{\infty}^2L^2\sigma^2 |\X|^{2}|A|\log H}{\theta^2(1-\gamma)^{4}H} \right)
	,\end{aligned}
  \] 
  where we recall the total variation of measure on finite space is equivalent to $l_1$ norm of the density vector.
\end{proof}

\begin{lemma}[Population discretization error] \label{lem:pop}
  For any population distribution $\mu\in \P_{\mathrm{unif}}([0,1]\times \X)$ and any $D$-class Q-value function $\widetilde{q}\in\widetilde{Q}$, we have
    \begin{align*}
        \left\| \bm{\Pi}_{D}\widetilde{\Gamma}_{\mathrm{IP}}\left(\mu, \widetilde{q}\right) - \Gamma_{\mathrm{IP}}\left( \mu, \bm{\Pi}_{D}\widetilde{q} \right) \right\|_{\mathrm{TV}} \le \frac{\sigma L_{P}L_d}{D}
    .\end{align*} 
\end{lemma}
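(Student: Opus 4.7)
The plan is to prove the claim via a pointwise-in-$u$ reduction followed by a classical perturbation bound for stationary distributions of ergodic Markov chains. Writing $\widetilde{\nu}:=\widetilde{\Gamma}_{\mathrm{IP}}(\mu,\widetilde{q})\in\P(\X)^{\mathcal{U}}$ and $\nu:=\Gamma_{\mathrm{IP}}(\mu,\bm{\Pi}_D\widetilde{q})\in\P_{\mathrm{unif}}([0,1]\times\X)$ with disintegration kernel $\nu^u$, I would first use that both $\bm{\Pi}_D\widetilde{\nu}$ and $\nu$ have uniform first marginal to obtain, via Fubini and testing against bounded measurable $\phi$,
\[
\|\bm{\Pi}_D\widetilde{\nu}-\nu\|_{\mathrm{TV}}=\int_0^1 \|\widetilde{\nu}^{\Pi_D(u)}-\nu^u\|_{\mathrm{TV}}\,du\le \sup_{u\in[0,1]}\|\widetilde{\nu}^{\Pi_D(u)}-\nu^u\|_{\mathrm{TV}}.
\]
So it suffices to establish the pointwise bound $\|\widetilde{\nu}^{\Pi_D(u)}-\nu^u\|_{\mathrm{TV}}\le \sigma L_PL_d/D$ for every $u$.

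Fix $u\in I_{u_d}$. The crucial observation is that on $I_{u_d}$ one has $(\bm{\Pi}_D\widetilde{q})(u,\cdot,\cdot)=\widetilde{q}(u_d,\cdot,\cdot)$, so the two induced policies coincide: $\Gamma_\pi(\bm{\Pi}_D\widetilde{q})(u,\cdot)=\Gamma_\pi(\widetilde{q})(u_d,\cdot)=:\pi_{u_d}$. Hence both $\widetilde{\nu}^{u_d}$ and $\nu^u$ are stationary distributions on $\X$ for Markov kernels \emph{under the same policy} $\pi_{u_d}$, differing only through the neighborhood measure. Denoting the kernels $K_{u_d}$ and $K_u$ (using $\sW\mu(u_d)$ and $\sW\mu(u)$ respectively), I would use the Lipschitz assumption on $P$ in \cref{assump:general.algo}(1) together with \cref{prop:W} to bound
\[
\sup_{x\in\X}\|K_u(x,\cdot)-K_{u_d}(x,\cdot)\|_{\mathrm{TV}}\le L_P\|\sW\mu(u)-\sW\mu(u_d)\|_{\mathrm{TV}}\le \frac{L_PL_d}{D}.
\]

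The final step is the perturbation-of-stationary-distribution inequality. Subtracting $\widetilde{\nu}^{u_d}K_{u_d}=\widetilde{\nu}^{u_d}$ from $\nu^u K_u=\nu^u$ and iterating yields the telescoping identity
\[
\widetilde{\nu}^{u_d}-\nu^u=\sum_{\tau=0}^{N-1}\nu^u(K_{u_d}-K_u)K_{u_d}^{\tau}+(\widetilde{\nu}^{u_d}-\nu^u)K_{u_d}^N,
\]
and the boundary term vanishes as $N\to\infty$ by the uniform ergodicity of $K_{u_d}$ (\cref{assump:ergodic}). Each summand $\nu^u(K_{u_d}-K_u)K_{u_d}^\tau$ is a signed measure of total mass zero with TV $\le L_PL_d/D$ trivially, and also bounded by $c_1c_2^\tau\cdot L_PL_d/D$ because a zero-mass signed measure contracts under $K_{u_d}^\tau$ at the mixing rate (decompose it into its Hahn--Jordan parts, rescale to probability measures, and apply the ergodicity bound). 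Combining the two estimates and splitting the sum at $\widehat{n}=\lceil\log_{c_2}c_1^{-1}\rceil$ gives
\[
\|\widetilde{\nu}^{u_d}-\nu^u\|_{\mathrm{TV}}\le \frac{L_PL_d}{D}\sum_{\tau=0}^{\infty}\min(1,c_1c_2^\tau)=\frac{L_PL_d}{D}\Big(\widehat{n}+\frac{c_1c_2^{\widehat{n}}}{1-c_2}\Big)=\frac{\sigma L_PL_d}{D},
\]
which is the desired bound.

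The main obstacle will be the final step: geometric ergodicity only provides eventual contraction, so the naive bound $\sum_\tau c_1c_2^\tau=c_1/(1-c_2)$ is lossy for small $\tau$ where $c_1c_2^\tau$ may exceed $1$. One must carefully split at $\widehat{n}$, using the trivial TV bound of $1$ on the short-time regime and the geometric bound afterward, to recover exactly the paper's constant $\sigma$. A minor care-point is verifying that $K_u$ (and not just $K_{u_d}$) actually possesses a stationary distribution so that $\nu^u$ is well defined; this follows because the telescoping argument only requires ergodicity of one of the kernels.
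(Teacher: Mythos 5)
Your proof is correct and follows essentially the same route as the paper's: reduce to a per-label comparison of the stationary distributions $\widetilde{\nu}^{u_d}$ and $\nu_u$, note that the induced policies coincide on each $I_{u_d}$ so the two chains differ only through the neighborhood-measure argument of $P$ (bounded by $L_PL_d/D$ via \cref{assump:general.algo} and \cref{prop:W}), and then apply a stationary-distribution perturbation bound with constant $\sigma$. The only difference is that the paper invokes this last step as a citation (\citep[Corollary 3.1]{mitrophanov2005sensitivity}) whereas you re-derive it via the telescoping identity and the split at $\widehat{n}$, which reproduces exactly the same constant.
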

\begin{proof}
  We first denote $\widetilde{\nu} \coloneqq \widetilde{\Gamma}_{\mathrm{IP}}\left( \mu,\widetilde{q} \right)$ and $\nu \coloneqq \Gamma_{\mathrm{IP}} \left( \mu, \bm{\Pi}_{D}\widetilde{q} \right)$. 

Let $\nu$ admits disintegration $du\nu_u(dx)$. By construction, for a.e. 
$u\in I_{d}$, conditional on $U=u$, $\widetilde{\nu}^{u_{d}}$ and $\nu_{u}$ are the invariant measures of two Markov processes that follow the same policy $\Gamma_{\pi}(\widetilde{q}^{u_{d}})$, but w.r.t. different neighborhood measure.
By \citep[Corollary 3.1]{mitrophanov2005sensitivity}, for the same $\sigma$ in \cref{lem:control}, we have for a.e. $u\in I_{d}$,
\begin{align*}
  \big\| \widetilde{\nu}^{u_{d}} - \nu_{u} \big\| _{\mathrm{TV}} &\le \sigma \sup_{x,a}\left\| P(x,\sW \mu (u_{d}), a)  - P(x,\sW \mu(u), a)\right\|_{\mathrm{TV}} \\
  &\le \sigma L_{P} \left\| \sW \mu (u_{d})  - \sW \mu(u)\right\|_{\mathrm{TV}} \le \frac{\sigma L_{P}L_d}{D}
,\end{align*}
Thus,
\begin{align*}
    \big\| \bm{\Pi}_{D}\widetilde{\nu} - \nu \big\|_{\mathrm{TV}} &=\sup_{\Vert\phi\Vert_{\infty}\le 1} \Big| \int_{[0,1]\times\X} \phi(u,x) (\bm{\Pi}_{D}\widetilde{\nu} - \nu)(du, dx) \Big|\\
    &\le \sum_{d=1}^D \sup_{\Vert\phi\Vert_{\infty}\le 1}\int_{I_{u_d}}\Big|\int_{\X} \phi(u,x) \big(\widetilde{\nu}^{u_{d}}(dx) - \nu_u(dx)\big) \Big|du \\
    &\le \sum_{d=1}^D \int_{I_{u_d}\times\X} \big\| \widetilde{\nu}^{u_{d}} - \nu_{u} \big\| _{\mathrm{TV}} du \le \frac{\sigma L_{P}L_d}{D}
.\end{align*}
\end{proof}

\begin{lemma}[Population discretization error]\label{lem:pol}
  For any population distribution $\mu\in \P_{\mathrm{unif}}([0,1]\times \X)$, let $q_{*} \coloneqq \Gamma_{\mathrm{BR}}\mu$ and $\widetilde{q}_{*} \coloneqq \widetilde{\Gamma}_{\mathrm{BR}}\mu$. We have
\begin{align*}
    \sup_{\mu}\left\| q_* - \bm{\Pi}_{D}\widetilde{q}_{*} \right\|_{\infty} \le \frac{L_d((1-\gamma)L_f  + \gamma\Vert f\Vert_{\infty}L_{P})}{(1-\gamma)^2D}
.\end{align*}
\end{lemma}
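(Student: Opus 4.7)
The plan is to view $q_\ast$ and $\bm{\Pi}_{D}\widetilde{q}_\ast$ as fixed points of two nearby Bellman operators indexed by $u\in[0,1]$, and run a standard perturbation argument that combines the graphon discretization bound in \cref{prop:W} with a contraction/self-bounding estimate.

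First I would write out both fixed-point identities pointwise on $[0,1]\times\X\times A$. By construction,
\begin{align*}
q_\ast(u,x,a) &= f(x,\sW\mu(u),a) + \gamma\,\langle P(x,\sW\mu(u),a),\,v_\ast(u,\cdot)\rangle,\\
\bm{\Pi}_{D}\widetilde{q}_\ast(u,x,a) &= f(x,\sW\mu(\Pi_D u),a) + \gamma\,\langle P(x,\sW\mu(\Pi_D u),a),\,\widetilde{v}_\ast(\Pi_D u,\cdot)\rangle,
\end{align*}
where $v_\ast(u,y):=\sup_{a'} q_\ast(u,y,a')$ and $\widetilde{v}_\ast(u_d,y):=\sup_{a'} \widetilde{q}_\ast(u_d,y,a')$. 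Note that $\bm{\Pi}_{D}\widetilde{q}_\ast$ is well defined on all of $[0,1]$ via the piecewise extension from $\mathcal U$, and it satisfies the second identity for every $u\in[0,1]$ since the right-hand side depends on $u$ only through $\Pi_D u$.

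Next I would subtract the two identities and split the difference into three pieces, inserting the intermediate quantity $\gamma\,\langle P(x,\sW\mu(\Pi_D u),a),v_\ast(u,\cdot)\rangle$:
\begin{align*}
q_\ast(u,x,a)-\bm{\Pi}_{D}\widetilde{q}_\ast(u,x,a)
&= \bigl[f(x,\sW\mu(u),a)-f(x,\sW\mu(\Pi_D u),a)\bigr]\\
&\quad + \gamma\,\langle P(x,\sW\mu(u),a)-P(x,\sW\mu(\Pi_D u),a),\,v_\ast(u,\cdot)\rangle\\
&\quad + \gamma\,\langle P(x,\sW\mu(\Pi_D u),a),\,v_\ast(u,\cdot)-\widetilde{v}_\ast(\Pi_D u,\cdot)\rangle.
\end{align*}
The first bracket is bounded by $L_f\cdot L_d/D$ using \cref{assump:general.algo} and \cref{prop:W}. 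For the second, the Lipschitzness of $P$ in the measure argument gives a TV-bound of $L_P L_d/D$ on the signed measure, and $v_\ast$ is uniformly bounded by $\|f\|_\infty/(1-\gamma)$ since $|q_\ast|\le\|f\|_\infty/(1-\gamma)$; so this piece contributes at most $\gamma\|f\|_\infty L_P L_d / ((1-\gamma)D)$. The third piece is a transition kernel applied to a difference of value functions, hence bounded by $\gamma\|v_\ast-\widetilde{v}_\ast(\Pi_D\cdot)\|_\infty \le \gamma\|q_\ast-\bm{\Pi}_D\widetilde{q}_\ast\|_\infty$ using the elementary inequality $|\sup_a \phi_1-\sup_a\phi_2|\le\|\phi_1-\phi_2\|_\infty$.

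Taking the supremum over $(u,x,a)\in[0,1]\times\X\times A$ and rearranging the resulting self-bounding inequality yields
\begin{equation*}
(1-\gamma)\,\|q_\ast-\bm{\Pi}_D\widetilde{q}_\ast\|_\infty \le \frac{L_f L_d}{D} + \frac{\gamma\,\|f\|_\infty L_P L_d}{(1-\gamma)D},
\end{equation*}
which rearranges to the stated bound. I do not expect any serious obstacle here; the only subtlety worth flagging is that the argument requires the supremum $\|q_\ast-\bm{\Pi}_D\widetilde{q}_\ast\|_\infty$ to be finite a priori so the self-bounding step is legitimate, but this follows from the uniform boundedness $\|q_\ast\|_\infty,\|\widetilde{q}_\ast\|_\infty\le\|f\|_\infty/(1-\gamma)$ that comes from the contraction of $\mathcal T_\mu$ on bounded functions, which is immediate from the finite state-action assumption and boundedness of $f$.
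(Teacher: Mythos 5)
Your proof is correct and arrives at exactly the paper's constant, but by a genuinely different route. The paper introduces a three-label generalized Q-function $q^{\rho_{u_0}}(u_1,u_2,x,a)$ (policy at $u_0$, transition at $u_1$, reward at $u_2$) and telescopes $q^{\pi_u}(u,u,x,a)-q^{\pi_{u_d}}(u_d,u_d,x,a)$ into three terms: a reward-label perturbation summed over the discounted trajectory (giving $L_fL_d/((1-\gamma)D)$), a transition-label perturbation controlled via an induction showing the $t$-step state-action occupancy measure is $tL_P$-Lipschitz in the neighborhood measure (giving the $\sum_t t\gamma^t = \gamma/(1-\gamma)^2$ factor), and a policy-swap term that is nonpositive by optimality. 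Your argument instead treats $q_*$ and $\bm{\Pi}_D\widetilde{q}_*$ as fixed points of two Bellman operators differing only in the label fed to $\sW\mu(\cdot)$, perturbs the operator once, and closes the loop with the self-bounding inequality $\|q_*-\bm{\Pi}_D\widetilde{q}_*\|_\infty \le L_fL_d/D + \gamma\|f\|_\infty L_PL_d/((1-\gamma)D) + \gamma\|q_*-\bm{\Pi}_D\widetilde{q}_*\|_\infty$; the geometric accumulation that the paper performs pathwise is absorbed into the final division by $1-\gamma$. Your version is shorter, avoids the occupancy-measure induction entirely, and — a genuine advantage — bounds the difference in absolute value on both sides, whereas the paper only controls $q_*-\bm{\Pi}_D\widetilde{q}_*$ from above and justifies the other direction with the assertion that $q_*$ dominates $\bm{\Pi}_D\widetilde{q}_*$ off $\mathcal{U}$ ``by optimality,'' which is not obvious since the two Q-functions are optimal for different MDPs. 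What the paper's longer route buys is the explicit occupancy-measure Lipschitz estimate \eqref{eq:Lipschitz.of.action.state.measure}, which is of independent use elsewhere; your flagging of the a priori finiteness needed for the self-bounding step is the right (and only) subtlety, and it is correctly dispatched by the uniform bound $\|f\|_\infty/(1-\gamma)$.
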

\begin{proof}
We defined a generalized state value function associated with a policy $\rho:[0,1]\times\X\to\P(A)$ by
\begin{align*}\hspace{-8pt}
    v^{\rho_{u_0}}(u_1,u_2,x) &= \E\bigg[ \sum_{\tau\ge 0} \gamma^{\tau}f(X^{\rho_{u_0}}_{\tau}, \sW \mu(u_2), \alpha^{\rho_{u_0}}_{\tau})\Big|X^{\rho_{u_0}}_0=x, U=u_1\bigg]
.\end{align*}
With a slight abuse of notation, we denote
\begin{align*}
    q^{\rho_{u_0}}(u_1,u_2,x,a)&:= f(x,\sW \mu(u_2), a) + \gamma\langle P(x,\sW \mu(u_1), a), v^{\rho_{u_0}}(u_1,u_2, \cdot)\rangle
,\end{align*}
where $\rho_{u_0}$ is to fix $u_0$ as the first argument of $\rho$, i.e., $\rho_{u_0}(x)=\rho(u_0,x)$. In words, $v^{\rho_{u_0}}(u_1,u_2,x)$ and $q^{\rho_{u_0}}(u_1,u_2,x,a)$ are generalization of typical value function and Q functions, where the policy follows label $u_0$, state transition follows $u_1$, and the reward follows $u_2$.

Note that $q_*\in\mathcal{Q}$, and $\widetilde{q}_{*}\in\widetilde{\mathcal{Q}}$. Let $\pi=\Gamma_{\pi}(q_*)$. By definitions of $\Gamma_{\mathrm{BR}}$ and $\widetilde{\Gamma}_{\mathrm{BR}}$, we know that
\begin{align*}
    q_*(u,x,a) = q^{\pi_{u}}(u,u,x,a) &\Longleftrightarrow v^{ \pi_{u}}(u,u, x) = \sup_{a\in A} q_*(u,x,a), \\
    \widetilde{q}_*(u_d,x,a) = q^{\pi_{u_d}}(u_d,u_d,x,a)  &\Longleftrightarrow  v^{ \pi_{u_d}}(u_d,u_d, x) = \sup_{a\in A} \widetilde{q}_*(u_d,x,a), \qquad 1\le d\le D
.\end{align*}
Note that $q_*$ and $\widetilde{q}_{*}$ coincide on the space $\mathcal{U}\times\X\times A$ by definitions. On $([0,1]\backslash\mathcal{U})\times\X\times A$, the Q-function $q_*$ is strictly larger than $\bm{\Pi}_{D}\widetilde{q}_{*}$ by its optimality. With this in mind, by the definition of the $L_{\infty}$ norm, we have
\begin{align*}
    \left\| q_{*} - \bm{\Pi}_{D}\widetilde{q}_{*} \right\| _{\infty}
    & = \sup_{x,a} \sup_{1\le d\le D} \sup_{u\in I_{u_d}} \Big( q_{*}(u,x,a) - \bm{\Pi}_{D}\widetilde{q}_{*}(u,x,a) \Big) \\
    &= \sup_{x,a} \sup_{1\le d\le D} \sup_{u\in I_{u_d}} \Big( q^{ \pi_u}(u,u,x,a) -  q^{ \pi_{u_d}}(u_d,u_d,x,a) \Big)
,\end{align*}
where
\begin{align*}
      q^{ \pi_{u}}(u,u,x,a) - q^{ \pi_{u_d}}(u_d,u_d,x,a) 
			&\le \underbrace{\Big| q^{ \pi_{u}}(u,u,x,a) - q^{ \pi_{u}}(u,u_d,x,a) \Big|}_{\mathrm{I}} \\
     &+ \underbrace{\Big|q^{ \pi_{u}}(u,u_d,x,a) - q^{ \pi_{u}}(u_d,u_d,x,a) \Big|}_{\mathrm{II}} \\
     &+ \underbrace{\Big(q^{ \pi_{u}}(u_d,u_d,x,a) - q^{ \pi_{u_d}}(u_d,u_d,x,a) \Big)}_{\mathrm{III}}
.\end{align*}
\textbf{Term I.} we use the Lipschitzness of the reward function, and obtain
\begin{align*}
    \mathrm{I}&\le \Big|f(x,\sW \mu(u), a) -f(x,\sW \mu(u_d), a)\Big|
    +\gamma \Big|\langle P(x,\sW \mu(u), a), v^{ \pi_{u}}(u,u, \cdot)-v^{ \pi_{u}}(u,u_d, \cdot)\rangle\Big|
    \\
    &\le L_f \Vert\sW \mu(u) - \sW \mu(u_d) \Vert_{\mathrm{TV}} \\
    &+ \gamma\bigg\langle P(x,\sW \mu(u), a)\, ,\, \E\bigg[ \sum_{\tau\ge 0} \gamma^{\tau} \Big|f(X^{\pi_{u}}_{\tau}, \sW \mu(u), \alpha^{\pi_{u}}_{\tau}) - f(X^{\pi_{u}}_{\tau}, \sW \mu(u_d), \alpha^{\pi_{u}}_{\tau})\Big| \bigg| X^{\pi_{u}}_0=\cdot, U=u\bigg] \bigg\rangle \\
    &\le L_f \Vert\sW \mu(u) - \sW \mu(u_d) \Vert_{\mathrm{TV}} 
    + \gamma\bigg\langle P(x,\sW \mu(u), a)\, ,\, \E\bigg[ \sum_{\tau \ge 0} \gamma^{\tau} L_f \Vert\sW \mu(u) - \sW \mu(u_d) \Vert_{\mathrm{TV}} \bigg| X^{\pi_{u}}_0=\cdot, U=u\bigg] \bigg\rangle \\
    &\le \frac{L_f }{1-\gamma}\Vert\sW \mu(u) - \sW \mu(u_d) \Vert_{\mathrm{TV}} \\
    &\le \frac{L_f L_d}{(1-\gamma)D}
.\end{align*}
\textbf{Term II.}
we first define iteratively the measure of state-action pair at time $t\ge 1$ under any policy $\rho_u:\X\to\P(A)$ as
\begin{align*}
    \underline{P}_{t}^{\rho_u}(x_0, m, a_0) &:=\L(X^{\rho_u}_{t}, \alpha^{\rho_u}_{t} | X^{\rho_u}_0=x_0, \alpha^{\rho_u}_0=a_0) \\
    &=  \int_{\X^2\times A^2} \Big[\delta_{x_{t}}\delta_{a_{t}}\rho_{u,x_{t}}(da_{t})P(x_{t-1},m,a_{t-1})(dx_{t})\Big]\underline{P}_{t-1}^{\rho_u}(x_0, m, a_0)(dx_{t-1}, da_{t-1})  \\
    &\in\P(\X\times A) 
.\end{align*}
We claim that for any $\rho_u:\X\to\P(A)$, any $(x_0,a_0)\in\X\times A$, any $m_1,m_2\in\P(\X)$ and any time $t\ge 1$,
\begin{align} \label{eq:Lipschitz.of.action.state.measure}
    &\|\underline{P}_{t}^{\rho_u}(x_0, m_1, a_0) - \underline{P}_{t}^{\rho_u}(x_0, m_2,a_0)\|_{\mathrm{TV}} 
    \le t L_P\Vert m_1-m_2\Vert_{\mathrm{TV}}.
\end{align}
It is trivial that
\begin{align*}
    &\underline{P}_1^{\rho_u}(x_0, m, a_0) =  P(x_0, m, a_0)
\end{align*}
is uniformly Lipschitz in measure argument under assumption \cref{assump:general.algo}. Assuming \eqref{eq:Lipschitz.of.action.state.measure} holds for $t-1$, we now show it holds for $t$ with the add-and-subtract trick again.
\begin{align*}
    &\|\underline{P}_{t}^{\rho_u}(x_0, m_1, a_0) - \underline{P}_{t}^{\rho_u}(x_0, m_2,a_0)\|_{\mathrm{TV}} \\
    &\le \sup_{\Vert\phi\Vert_{\infty}\le 1} \int_{A\times\X^2} \phi(a_{t},x_{t})\rho_{u,x_{t}}(da_{t})\bigg[P_{x_{t-1},m_1,a_{t-1}}(dx_{t})\underline{P}_{t-1}^{\rho_u}(x_0, m_1, a_0)(dx_{t-1},da_{t-1}) \\
    &- P_{x_{t-1},m_2,a_{t-1}}(dx_{t})\underline{P}_{t-1}^{\rho_u}(x_0, m_2, a_0)(dx_{t-1},da_{t-1})\bigg] \\
    &\le \sup_{\Vert\phi\Vert_{\infty}\le 1} \int_{A\times\X^2} \phi(a_{t},x_{t}) \rho_{u,x_{t}}(da_{t})\bigg[P_{x_{t-1},m_1,a_{t-1}}-P_{x_{t-1},m_2,a_{t-1}} \bigg](dx_{t})\underline{P}_{t-1}^{\rho_u}(x_0, m_1, a_0)(dx_{t-1},da_{t-1}) \\
    &+ \sup_{\Vert\phi\Vert_{\infty}\le 1} \int_{A\times\X^2} \phi(a_{t},x_{t}) \rho_{u,x_{t}}(da_{t}) P_{x_{t-1},m_2,a_{t-1}}(dx_{t}) \bigg[\underline{P}_{t-1}^{\rho_u}(x_0, m_1, a_0) - \underline{P}_{t-1}^{\rho_u}(x_0, m_2, a_0)\bigg](dx_{t-1},da_{t-1}) \\
    &\le (t-1)L_P\Vert m_1-m_2\Vert_{\mathrm{TV}} +L_P\Vert m_1-m_2\Vert_{\mathrm{TV}}\\
    & = t L_P\Vert m_1-m_2\Vert_{\mathrm{TV}}
.\end{align*}
With this claim, we have
\begin{align*}
     \mathrm{II}&\le \Big|q^{ \pi_{u}}(u,u_d,x,a) - q^{ \pi_{u}}(u_d,u_d,x,a) \Big| \\
     &\le\sum_{t \ge 0}  \gamma^{t} \bigg| \Big\langle \underline{P}_{t}^{\pi_{u}}(x, \sW \mu(u), a) - \underline{P}_{t}^{\pi_{u}}(x, \sW \mu(u_d), a)\, ,\, f(\cdot, \sW \mu(u_d), \cdot) \Big\rangle\bigg| \\
     &\le \sum_{t \ge 0}  \gamma^{t} \Vert f\Vert_{\infty} \|\underline{P}_{t}^{\pi_u}(x, \sW \mu(u), a) - \underline{P}_{t}^{\pi_u}(x, \sW \mu(u_d), a)\|_{\mathrm{TV}} \\
    &\le L_P \Vert f\Vert_{\infty}\Vert\sW \mu(u) - \sW \mu(u_d) \Vert_{\mathrm{TV}} \sum_{t\ge 0}  t\gamma^{t} \\
    &\le \frac{\gamma\Vert f\Vert_{\infty} L_{P}L_d}{(1-\gamma)^{2}D}
.\end{align*}

\textbf{Term III.} It is immediate that
\begin{align*}
    \mathrm{III}&=q^{ \pi_{u}}(u_d,u_d,x,a) - q^{ \pi_{u_d}}(u_d,u_d,x,a)\le 0,
\end{align*}
as $\pi_{u_d}$ is the optimizer of $v^{\pi_{u_d}}(u_d,u_d, \cdot)$. 

Finally, we conclude
\begin{align*}
  \left\| q_{*} - \bm{\Pi}_{D}\widetilde{q}_{*} \right\|_{\infty} \le \frac{L_d((1-\gamma)L_{f} + \gamma\Vert f\Vert_{\infty}L_{P})}{(1-\gamma)^2D}
.\end{align*}
\end{proof}

\section{Experiment Setup}
\label{section:experiment.setup}



\subsection{Experiment 1: Flocking-Graphon}
The Flocking-Graphon game studies the flocking behavior, i.e., the phenomenon that agents gather together at some location as time goes by, in a large populations (of animals). Its modeling finds applications in psychology, animation, social science, or swarm robotics \cite{perrin2021mean}. Each player in the game makes decisions regarding velocity control to avoid its own deviation from the centroid of the population, and the desirable outcome (i.e., equilibrium reached by the population) reveals how a consensus can be reached in a group without centralized decision-making.

We consider a flocking game \cite{Lacker2022ALF} on one-dimensional space $\mathcal{X}=\R$, and each agent is allowed to control its velocity in the compact action space $A\subset \R$. The transition dynamic is defined to be a continuous time state process, given by:
\begin{align*}
   dx_t = \alpha_tdt +\sigma d B_t
,\end{align*}
,where $x_t\in\mathcal{X}$. $\alpha_t$ is the velocity control at time $t$, and we usually consider it to be a closed loop control, i.e., $\alpha_t=\alpha_t(x)$ for function $\alpha$, which represents the velocity at position $x$ at time $t$. $B_t$ is a one-dimensional Brownian motion. The player aims to optimize the following objective
\begin{align*}
    J_W(\mu, \alpha):=-\E\Big[\int_{0}^{T}\alpha^2_t dt+c\big|x_T-G^{\mu}(U)\big|^2\Big]
,\end{align*}
where $c>0$ is a constant, and
\begin{align*}
    G^{\mu}(u)&:=\langle \sW\mu_T(u), \mathrm{Id}\rangle = \int_{[0,1]\times\R}W(u,v)x \mu_T(dv,dx)
,\end{align*}
with $\mathrm{Id}$ being the identity mapping. $G^{\mu}(u)$ is interpreted as the centroid of the population over the space domain $\mathcal{X}$. More specifically, $G^{\mu}(u)$ is the average of the state distribution of the population $\mu$, weighted from the perspective of player with label $u$. Intuitively, the running cost arises from change in the velocity, and the terminal cost is associated with deviation from the centroid at terminal time. 




\subsection{Experiment 2: SIS-Graphon}
\cite{cui2022learning} considers a game that models pandemic evolution. It admits state space $\X=\{x_{S}, x_{I}\}$ where $x_{S}$ represents a safe state, and $x_I$ represents an infection state. The action space is taken to be $A=\{a_U, a_D\}$, where $a_U$ represents keeping interaction with others and $a_D$ represents taking a quarantine. 
The terminal time is set to $T=50$. The transition probability is 
\begin{align*}
\PP(x_{S}|x_{I}, m, a) &= \frac{1}{2} \qquad \forall (m,a)\in\M_{+}(\X)\times A \\
\PP(x_{I}|x_{S}, m, a_U) &= \frac{4}{5} m(x_{I}) \qquad \forall m\in\M_{+}(\X) \\
\PP(x_{I}|x_{S}, m, a_D) &= 0  \qquad \forall m\in\M_{+}(\X)
.\end{align*}
An infected agent may turn safe with half probability each time step, regardless of the action.
The probability a safe agent is infected is proportion to the infected individuals in her neighborhood when she keeps interaction with others, and is 0 when she takes a quarantine.
The reward function is given by
\begin{align*}
f(x, m, a) &= -2\cdot \bm{1}_{x_{I}}(x) - 0.5 \cdot\bm{1}_{a_D}(a)
.\end{align*}
An agent takes cost from both being infected and taking quarantine action.

\subsection{Experiment 3: Investment-Graphon}
In the Investment-Graphon game \cite{cui2022learning}, the terminal time is set to $T=50$. Each agent is viewed as a firm, and let $\X=\{0,1,\dots,9\}$ be the quality of products this firm provides. With action space given by $A=\{a_I, a_O\}$, the transition kernel is defined by
\begin{align*}
\PP(x+1|x, m, a_I) &= \frac{9-x}{10} \qquad \forall m\in\M_{+}(\X) \\
\PP(x|x, m, a_I) &= \frac{1+x}{10} \qquad \forall m\in\M_{+}(\X) \\
\PP(x|x, m, a_O) &= 1  \qquad \forall m\in\M_{+}(\X)
.\end{align*}
We Interpret $a_I$ as investment, and $a_O$ as not investing. A firm may improve the product quality by investing, and the probability of a successful investment decrease as the current quality is already high. Initially, every firm starts from quality 0.
The reward function is given by
\begin{align*}
f(x, m, a) &= \frac{0.3x}{1+\sum_{x^{'}\in\X} x^{'}m(x^{'})}-2\cdot \bm{1}_{a_I}(a)
.\end{align*}
A firm's profit is proportion to the quality of product, and decrease with the average product quality within its neighborhood.

\section{Experiment Results}
\label{sectionn:experiment.results}
In this section, we present detailed  numerical results for three graphon games utilized in the main body. The experiment results include algorithm performance (convergence gap, W1-distance, exploitability) and GMFE. 

\begin{figure}[H]
    \centering
    \includegraphics[scale=.35]{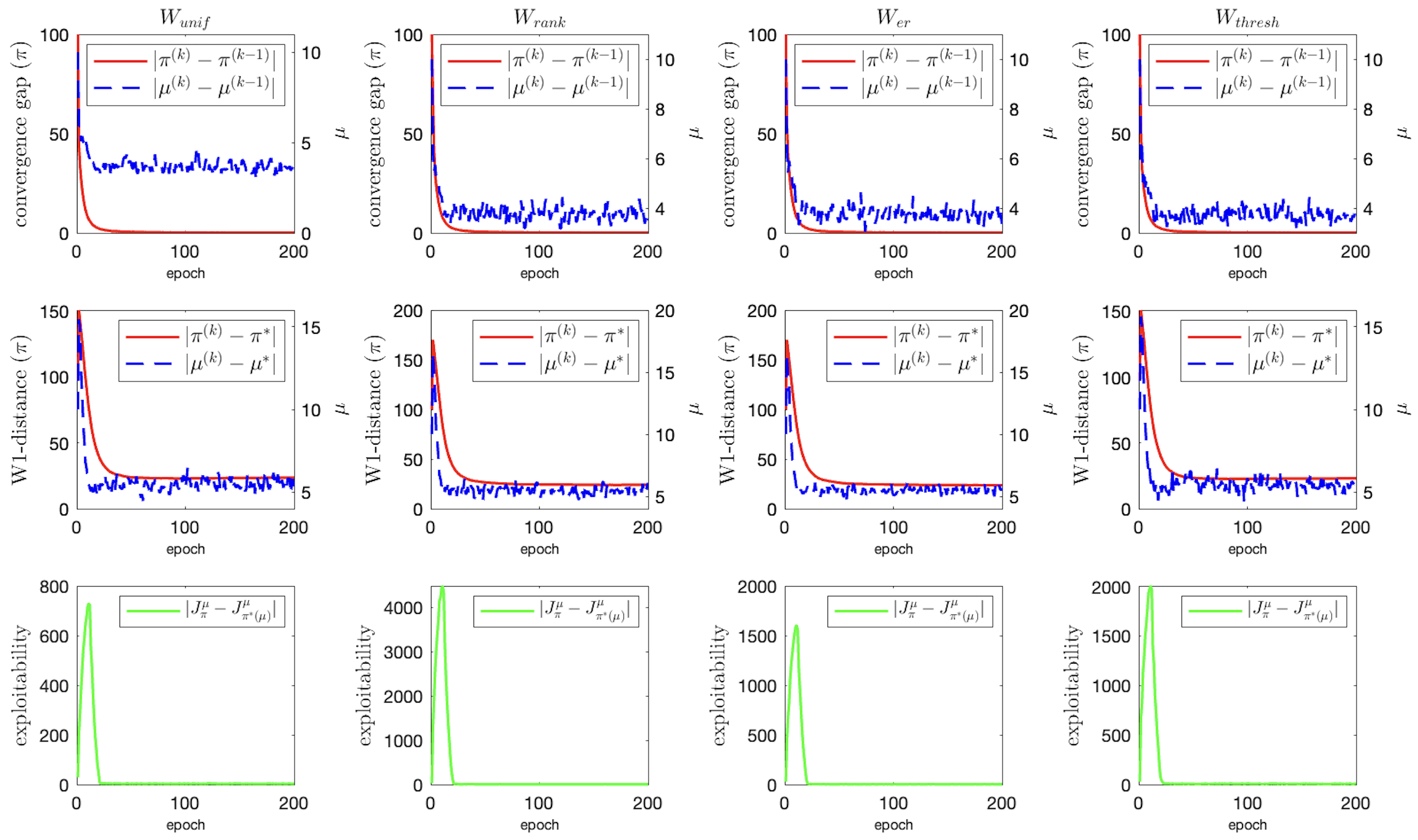}
	\caption{\textbf{Flocking-Graphon}: Algorithm performance. We demonstrate the convergence gap (top), W1-distance (middle) and exploitability (bottom) corresponding to four types of graphs. The exploitability indicates how an agent can improve be deviating from the policy used by the rest of the population. Mathematically, the exploitability is calculated as $|J^{\mu}_\pi-J^{\mu}_{\pi^*(\mu)}|$. It measures the gap between the policy adopted by the population and the best policy that an agent can achieve in response to the population state.}
	\label{fig:flock_algo_1} 
\end{figure}

\begin{figure}[H]
    \centering
    \includegraphics[scale=.35]{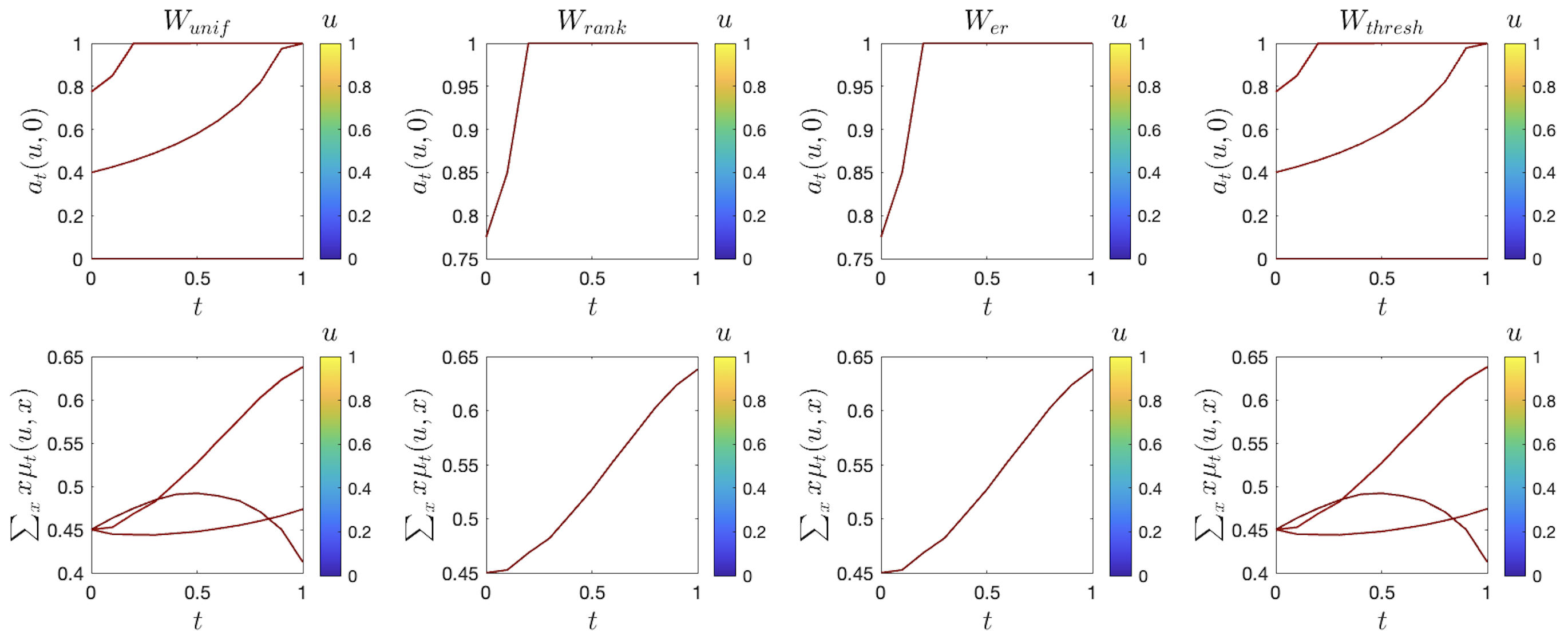}
	\caption{\textbf{Flocking-Graphon}: GMFE. Top: The velocity control at position $x=0$. The x-axis denotes the time horizon and the y-axis denotes the velocity at equilibrium. The color bar denotes the label state. Bottom: The expected position $x$ across the time. It can be regarded as the centroid of the population.}
	\label{fig:flock} 
\end{figure}

\begin{figure}[H]
    \centering
    \includegraphics[scale=.35]{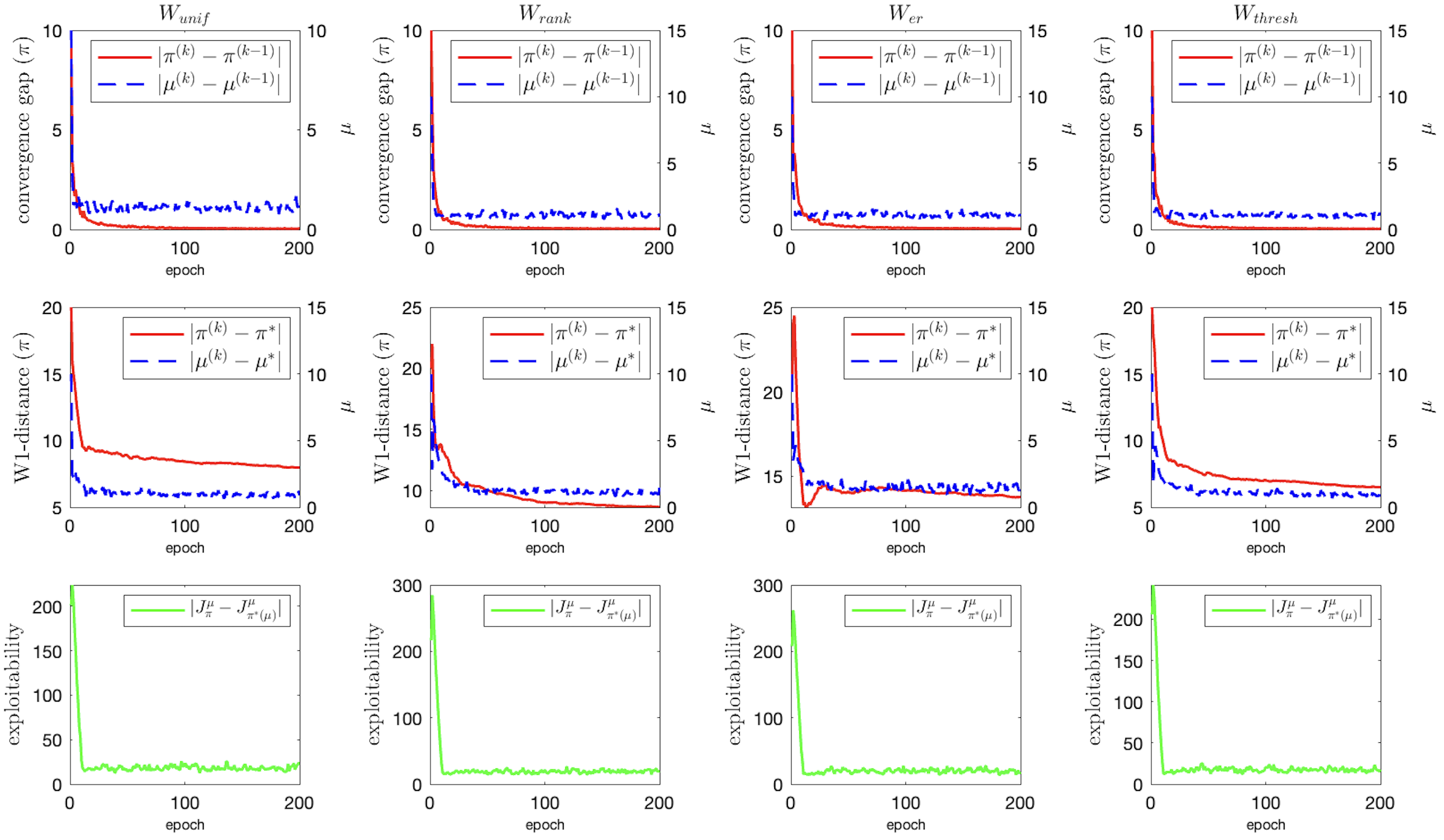}
	\caption{\textbf{SIS-Graphon}: Algorithm performance.  We demonstrate the convergence gap (top), W1-distance (middle) and exploitability (bottom) corresponding to four types of graphs.}
	\label{fig:sis_algo} 
\end{figure}

\begin{figure}[H]
    \centering
    \includegraphics[scale=.35]{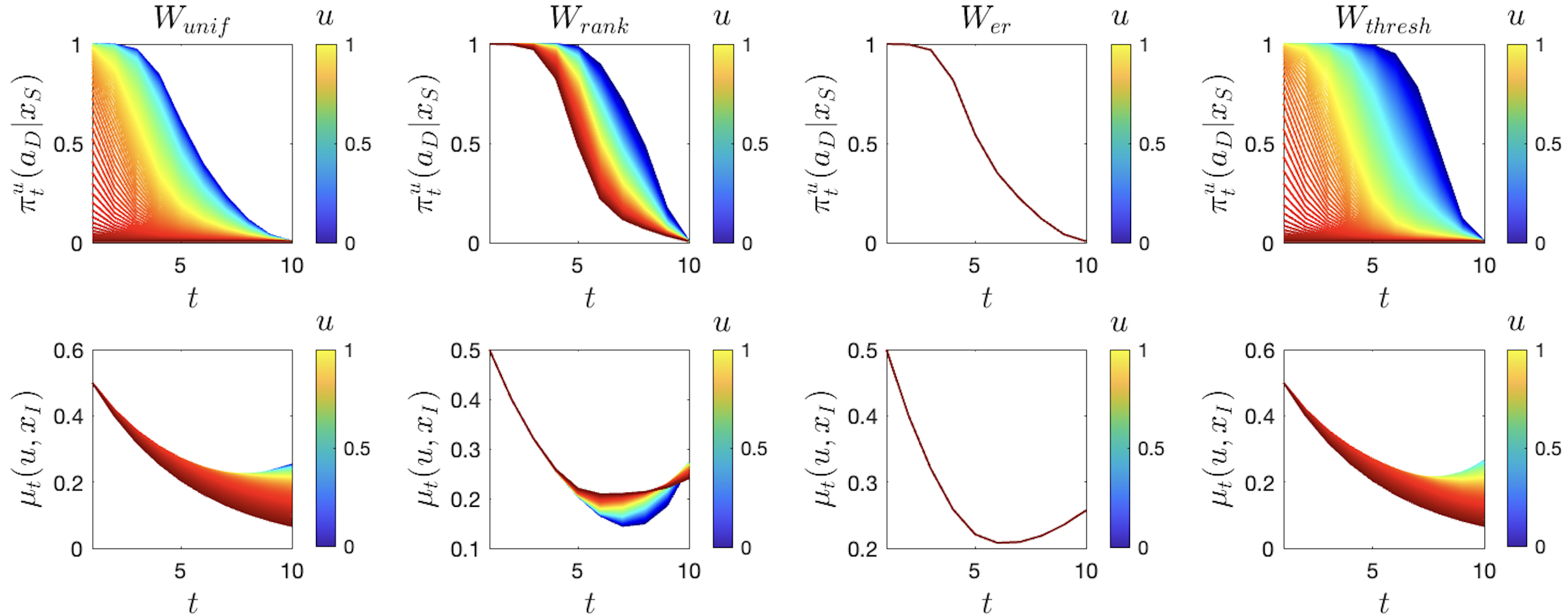}
	\caption{\textbf{SIS-Graphon}: GMFE. Top: The probability of taking precautions when healthy. The results for graphs $W_{\mathrm{unif}}$, $W_{\mathrm{rank}}$ and $W_{\mathrm{er}}$ is consistent with \cite{cui2022learning}. We add the results for graph $W_{\mathrm{thresh}}$. It is shown that the GMFE with $W_{\mathrm{thresh}}$ is similar to $W_{\mathrm{unif}}$. Bottom: The population being infected. Agents with a higher $u$ have fewer connections with others. It means they are less likely infected by the population in a comparison to others. Thus, they take fewer precautions.}
	\label{fig:sis_policy} 
\end{figure}

\begin{figure}[H]
    \centering
    \includegraphics[scale=.35]{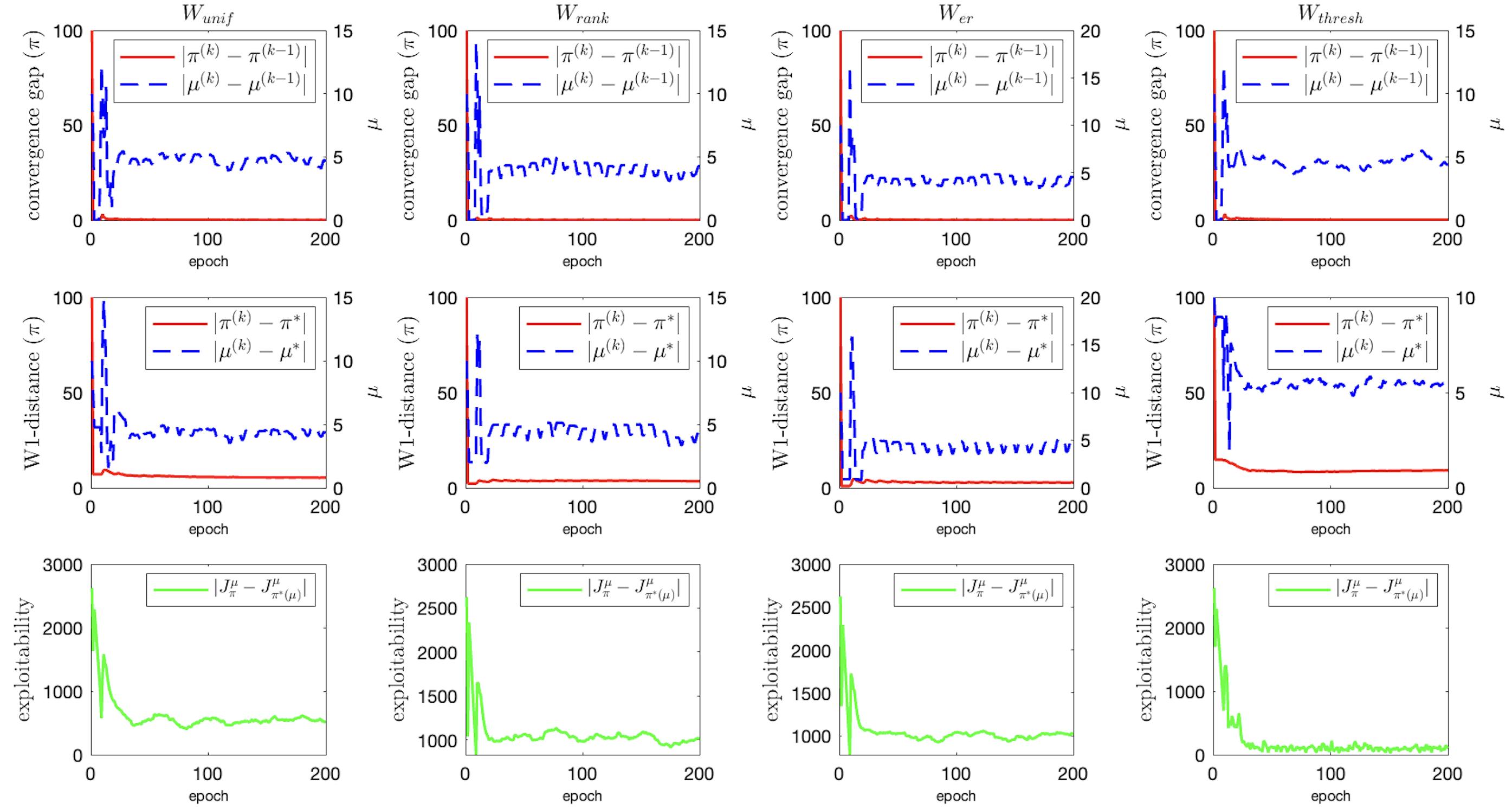}
	\caption{\textbf{Invest-Graphon}: Algorithm performance. We demonstrate the convergence gap (top), W1-distance (middle) and exploitability (bottom) corresponding to four types of graphs.}
	\label{fig:invest_algo} 
\end{figure}

\begin{figure}[H]
    \centering
    \includegraphics[scale=.35]{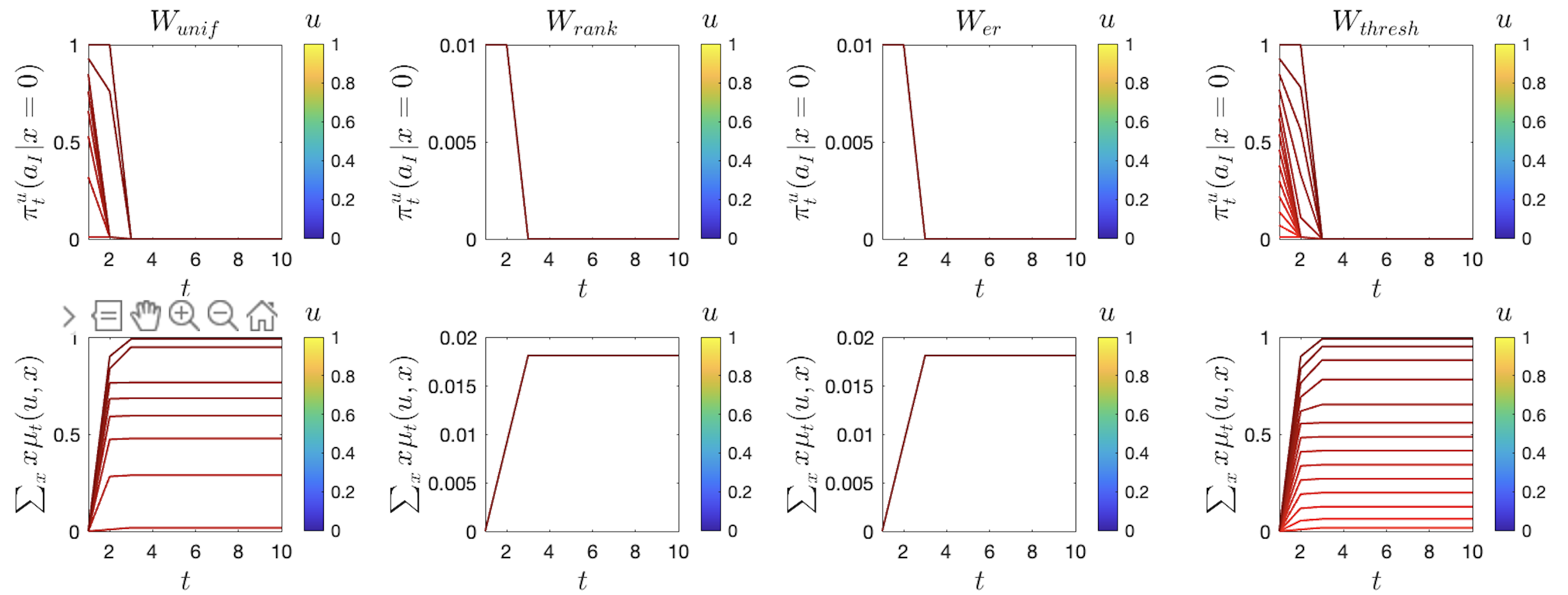}
	\caption{\textbf{Invest-Graphon}: GMFE. Top: the probability of investing on product quality when $x=0$. Bottom: The expected product quality across the time.}
	\label{fig:invest_policy} 
\end{figure}

\end{document}